
\documentclass[reqno]{amsart}



\usepackage[utf8]{inputenc}
\usepackage[T1]{fontenc}
\usepackage[british]{babel,isodate}
\cleanlookdateon
\usepackage{lmodern}
\usepackage{enumerate}
\usepackage{tikz}
\usepackage{tikz-cd}
\usetikzlibrary{arrows}
\usepackage{cancel}
\usetikzlibrary{babel}
\usepackage{lscape}
\usepackage{url}

\usepackage{hyperref}
\hypersetup{linktocpage,colorlinks=true,citecolor=purple,linkcolor=blue,urlcolor=blue,filecolor=black}

\usepackage{float}
\usepackage{graphicx, import}
\usepackage{mathtools}
\usepackage{dsfont}
\usepackage{pinlabel}
\usepackage{yhmath}


\usepackage{xargs}                      
\usepackage{xcolor} 
\definecolor{OliveGreen}{rgb}{0,0.6,0}
 
\usepackage[colorinlistoftodos, prependcaption,textsize=tiny]{todonotes}
\newcommandx{\unsure}[2][1=]{\todo[linecolor=red,backgroundcolor=red!25,bordercolor=red,#1]{#2}}
\newcommandx{\change}[2][1=]{\todo[linecolor=blue,backgroundcolor=blue!25,bordercolor=blue,#1]{#2}}
\newcommandx{\info}[2][1=]{\todo[linecolor=OliveGreen,backgroundcolor=OliveGreen!25,bordercolor=OliveGreen,#1]{#2}}

\usepackage{stackengine}

\usepackage{amsmath,amsthm,amssymb, amsfonts, amscd}
\usepackage{rotating,subcaption}
\usepackage{url}
\usepackage{graphicx,bm}
\usepackage{mathtools}
\usepackage[mathscr]{euscript}
\allowdisplaybreaks
\usepackage{lipsum}
\usepackage{bm}
\usepackage[capitalize]{cleveref} 






\usepackage{xpatch}
\makeatletter
\AtBeginDocument{\xpatchcmd{\@thm}{\thm@headpunct{.}}{\thm@headpunct{}}{}{}}
\makeatother

\newtheorem{theorem}{Theorem}[section]
\newtheorem{proposition}[theorem]{Proposition}
\newtheorem{lemma}[theorem]{Lemma}
\newtheorem{corollary}[theorem]{Corollary}




\theoremstyle{definition}

\newtheorem{example}[theorem]{Example}
\newtheorem{examples}[theorem]{Examples}
\newtheorem{construction}[theorem]{Construction}




\theoremstyle{remark}

\newtheorem{remark}[theorem]{Remark}


\numberwithin{equation}{section}


\newcommand{\hooklongrightarrow}{\lhook\joinrel\longrightarrow}

\newcommand{\N}{\mathbb{N}}

\newcommand{\C}{\mathcal{C}}
\newcommand{\CC}{\mathsf{C}}
\newcommand{\B}{\mathsf{B}}
\newcommand{\V}{\mathcal{V}}
\newcommand{\W}{\mathcal{W}}
\newcommand{\Cstr}{\C^{\mathrm{str}}}
\newcommand{\D}{\mathcal{D}}

\newcommand{\id}{\mathrm{Id}}
\newcommand{\homC}[2]{\mathrm{Hom}_{\C}(#1,#2)}
\let\hom\relax
\newcommand{\hom}[3]{\mathrm{Hom}_{#1}(#2,#3)}
\renewcommand{\to}{\longrightarrow}

\newcommand{\rotcong}{\rotatebox[origin=c]{90}{$\cong$}}
\newcommand{\LLongrightarrow}{\equiv \mkern-4mu \Rrightarrow}

\newcommand{\toiso}{\overset{\cong}{\to}}


\usepackage{accents}
\newcommand{\uhat}{\underaccent{\check}}

\makeatletter
\newcommand{\cupr@tip}{\text{\raisebox{-0.1ex}{$\m@th\hat{}$}}}
\newcommand{\cupr}{\mathbin{\cup\cupr@}}

\newcommand{\cupr@}{%
  \mathchoice
  {\mkern-1.35mu\cupr@tip}
  {\mkern-1.35mu\cupr@tip}
  {\mkern-1.55mu\cupr@tip}
  {\mkern-1.875mu\cupr@tip}
}
\makeatother

\makeatletter
\newcommand{\capr@tip}{\text{\raisebox{0.47ex}{$\m@th\uhat{}$}}}
\newcommand{\capr}{\mathbin{\capr@\cap}}

\newcommand{\capr@}{%
  \mathchoice
  {\mkern11.6mu\capr@tip\mkern-11.6mu}
  {\mkern11.4mu\capr@tip\mkern-11.4mu}
  {\mkern11.1mu\capr@tip\mkern-11.1mu}
  {\mkern10.2mu\capr@tip\mkern-10.2mu}
}
\makeatother

\makeatletter
\newcommand{\capl@tip}{\text{\raisebox{0.47ex}{$\m@th\uhat{}$}}}
\newcommand{\capl}{\mathbin{\capl@\cap}}

\newcommand{\capl@}{%
  \mathchoice
  {\mkern2.1mu\capl@tip\mkern-2.1mu}
  {\mkern2.1mu\capl@tip\mkern-2.1mu}
  {\mkern2.3mu\capl@tip\mkern-2.3mu}
  {\mkern2.1mu\capl@tip\mkern-2.1mu}
}
\makeatother

\makeatletter
\newcommand{\cupl@tip}{\text{\raisebox{-0.1ex}{$\m@th\hat{}$}}}
\newcommand{\cupl}{\mathbin{\cupl@\cup}}

\newcommand{\cupl@}{%
  \mathchoice
  {\mkern1.35mu\cupl@tip\mkern-1.35mu}
  {\mkern1.35mu\cupl@tip\mkern-1.35mu}
  {\mkern1.55mu\cupl@tip\mkern-1.55mu}
  {\mkern1.875mu\cupl@tip\mkern-1.875mu}
}
\makeatother



\DeclareFontFamily{U}{mathx}{}
\DeclareFontShape{U}{mathx}{m}{n}{ <-> mathx10 }{}
\DeclareSymbolFont{mathx}{U}{mathx}{m}{n}
\DeclareFontSubstitution{U}{mathx}{m}{n}
\DeclareMathAccent{\widecheck}{0}{mathx}{"71}







\begin{document}


\title{Strictification and non-strictification of monoidal categories}

\date{\today}

\author{Jorge Becerra}
\address{Institut de Mathématiques de Bourgogne, UMR 5584, CNRS \& Université de Bourgogne, F-21000 Dijon, France}
\email{\href{mailto:Jorge.Becerra-Garrido@u-bourgogne.fr}{Jorge.Becerra-Garrido@u-bourgogne.fr}}
\urladdr{ \href{https://sites.google.com/view/becerra/}{https://sites.google.com/view/becerra/}} 




\begin{abstract}

In this survey paper we give account of several approaches to the strictification and non-strictification of monoidal categories, which are constructions that turn a monoidal category into a (non-)strict one monoidally equivalent to the original category, and how they are related to analogous notions in higher categorical structures. We first provide explicit, elementary models for the  (non-)strictification and show that  these two constructions give the free (non-)strict monoidal category generated by a monoidal category. Moreover, we prove in detail that these two constructions are part of a pair of free-forgetful 2-adjunctions. We later show that these constructions can be recovered from Power's general coherence theorem for 2-monads. Lastly we describe another model for the strictification based on right-module endofunctors and provide a detailed, self-contained proof that this is a particular instance of strictification of bicategories via the bicategorical analogue of the Yoneda embedding.
%
%
%
%
%
%
%
%
%
%
\end{abstract}

\keywords{strictification, non-strictification, 2-adjunction, core truncation}
\subjclass{18M05}


\maketitle

\setcounter{tocdepth}{1}
\tableofcontents


\section{Introduction}

Monoidal categories are ubiquitous in mathematics, having a far-reaching role in homotopy theory \cite{hovey,schwede,mcds}, theoretical computer science \cite{cs1,cs2}, quantum topology \cite{KRT,turaev,ghw} or theoretical physics \cite{MR1169489,runkel,kawa}, among many other areas. Monoidal categories can be thought of as the categorification of the algebraic structure of monoid. Namely, the product operation $ \cdot : M \times M \to M$ and the unit element $1 \in M$ of a monoid $M$ get categorified to a bifunctor $\otimes: \C \times \C \to \C$ and a object $\bm{1} \in \C$. The associativity axiom $(m_1 m_2)m_3 =m_1 (m_2 m_3)$ and the unit axiom $1\cdot m =m = m \cdot 1$ in a monoid can be categorified to equalities of objects $(X \otimes Y) \otimes Z = X \otimes (Y \otimes Z)$ and $1 \otimes X=X=X \otimes 1$, but in category theory this is a too strong requirement to demand. Instead, it is much more natural to ask that the objects $(X \otimes Y) \otimes Z $  and $ X \otimes (Y \otimes Z)$ are simply isomorphic, and the same applies to  $1 \otimes X$, $X$ and $X \otimes 1$. If the equalities hold, one generally talks about a \textit{strict} monoidal category.

It is a folklore result that any monoidal category is monoidally equivalent (that is, equivalent through a functor that respects the monoidal structures) to a strict one \cite{maclane,JS}. Concretely, for any monoidal category $\C$, Mac Lane \cite{CFTWM} (see also  \cite{kassel})  constructed a monoidally equivalent category $\Cstr$, called its \textit{strictification}. This result is in fact equivalent to Mac Lane's  coherence theorem. Similarly, one is sometimes also  interested in obtaining a non-strict category out of a monoidal category \cite{BNnonassociative,habiromassuyeau}. 
Tweaking Mac Lane's construction, one similarly obtains that any  monoidal category $\C$ is also monoidally equivalent to a non-strict one $\C_q$. By analogy, we call $\C_q$ the \textit{non-strictification} of $\C$.

In this paper,  we give an elementary, self-contained exposition of these two constructions (the first is well-known, the second is not). These are modelled as categories that have as objects (parenthesised) sequences of objects of the original category. In fact, we also give explicit proofs that these constructions are functorial, that is, that they give rise to 2-functors 
\begin{equation}\label{eq:str_and_q_intro}
\mathrm{str}: \mathsf{MonCat} \to \mathsf{strMonCat} \qquad , \qquad  q: \mathsf{MonCat} \to \mathsf{nonstrMonCat}.
\end{equation}



Here we have written $\mathsf{MonCat}$ (resp. $\mathsf{strMonCat}$ , $\mathsf{nonstrMonCat}$) for the 2-categories with 0-cells monoidal categories (resp. strict monoidal categories, non-strict monoidal categories), 1-cells strong monoidal functors (resp. strict monoidal functors, in both cases) and 2-cells monoidal natural transformations (in the three cases). Moreover, we also give detailed, elementary proofs of the fact that these 2-functors are in fact part of a pair of free-forgetful 2-adjunctions, 
\begin{equation}\label{eq:adjunctions_intro}
\begin{tikzcd}[column sep={4em,between origins}]
\mathsf{nonstrMonCat}    \arrow[rr, bend right, ""' pos=0.50]
& \bot &  \mathsf{MonCat} \arrow[ll, bend right,  "{q}"' pos=0.50]
  \arrow[rr, bend left, swap, "{ \mathrm{str}}"' pos=0.50] \arrow[rr, bend left, swap, ""' pos=0.50] 
& \bot &  \mathsf{strMonCat}
  \arrow[ll, bend left, swap, ""' pos=0.50]
\end{tikzcd}
\end{equation}
where the unlabelled 2-functors are the canonical forgetful ones. These results should be known to experts, but to the author's knowledge there is no literature about it where these constructions are made explicit.
%
%
%

It turns out that these two models $\Cstr$ and $\C_q$, based on (parenthesised) sequences of objects, as well as their monoidal equivalences to $\C$,   can actually be seen as two instances of a general coherence result about 2-monads due to Power \cite{power}. In the same way as there is a monad $T$ in $\mathbf{Set}$, the \textit{free monoid monad}, whose algebras are monoids, there is a 2-monad (that is, a 2-categorical version of a monad) $T$ in $\mathsf{Cat}$, the \textit{free monoid 2-monad},  whose strict algebras are essentially strict monoidal categories and whose pseudo-algebras are essentially monoidal categories. Power's coherence theorem ensures that under mild conditions (see \cref{thm:power}) every pseudo-algebra is equivalent to a strict algebra. When applying this theorem to the free monoid 2-monad, we precisely recover the monoidal equivalence between $\Cstr$ and $\C$ discussed above, see \cref{thm:recovering_equivalence_Cstr_C}. An analogous result holds for the non-strictification, using the \textit{free magma 2-monad}.

There is another approach to strictification that we would like to discuss. It is based on the fact that if $M$ is a monoid and $\mathrm{End}_M(M)$ is the monoid of self-maps of $M$ that preserve the right multiplication, $f(n\cdot m)=f(n)\cdot m$, then the ``multiplication by'' map $M \to \mathrm{End}_M(M)$, $m \mapsto m \cdot -$ is a monoid isomorphism. Categorifying the notion of monoid to that of monoidal category, mimicking this construction yields a monoidal equivalence 
\begin{equation}\label{eq:Psi_intro}
\C \overset{\simeq}{\to} \operatorname{End}_\C (\C)
\end{equation}
where $\operatorname{End}_\C (\C)$ is a strict monoidal category, called the category of \textit{right-module endofunctors} of $\C$. The objects of this category are endofunctors $F$ of $\C$ together with a natural isomorphism $F(X \otimes Y) \toiso F(X) \otimes Y$, categorifying the monoid isomorphism above, in the sense that  when a monoid is viewed as a discrete strict monoidal category, the equivalence \eqref{eq:Psi_intro} boils down to the aforementioned monoid isomorphism. 

It turns out that the above construction can be further categorified  one more time to a statement in the realm of \textit{bicategories }(also called weak 2-categories). Bicategories encode under a same concept the notions of category, monoidal category and 2-category. They can be thought of as 2-categories where the  composition law is not associative and unital ``on the nose'' but only up to natural isomorphism.

For any bicategory $\B$, there is a \textit{bicategorical Yoneda embedding} $$ Y: \B \hooklongrightarrow \mathsf{Cat}^{\B^{\mathrm{op}}}$$ which identifies $\B$ with its essential image $\mathsf{str}(\B)$, which is a 2-category and hence shows that any bicategory is equivalent (in the bicategorical sense) to a 2-category. When this identification is applied to a one-object bicategory, which happens to be the same thing as a monoidal category, then we give a detailed proof that this identification amounts exactly to the monoidal equivalence \eqref{eq:Psi_intro}. This fact is well-known to experts (e.g. \cite{leinster_hohc}), but a detailed proof does not seem to have appeared yet.

\subsection*{Organisation of the paper} 

In \cref{sec:1}, we recall the basic concepts in monoidal category theory. In \cref{sec:2},  we recall Mac Lane's construction of the strictification of a monoidal category following Kassel \cite{kassel}, where we slightly simplify the argument given therein. We also prove the universal property of the strictification (both for monoidal functors and monoidal natural transformations) and conclude with a concrete realisation of this construction for categories whose collection of objects is given by the free magma on a set. In \cref{sec:3}, in a completely analogous way to the previous one, we start constructing the non-strictification of  a monoidal category, later we show its universal property (for monoidal functors and monoidal natural transformations) and conclude with a concrete realisation for categories whose collection of objects is given by the free monoid on a set. In \cref{sec:4}, we  review 2-categories, 2-functors and 2-adjunctions,  upgrade the (non-)strictification to 2-functors  \eqref{eq:str_and_q_intro},  prove the 2-adjunctions   \eqref{eq:adjunctions_intro} and conclude obtaining a pair of adjoint equivalences for the core truncations of the aforementioned 2-categories. In \cref{sec:5}, after reviewing monads and 2-monads, we  study Power's general coherence result for 2-monads and use it to recover the constructions of $\Cstr$ and $\C_q$ when applied to the free monoid 2-monad and free magma 2-monad, respectively. Finally, in \cref{sec:6}, we start by providing a different model for the strictification of monoidal categories using the category of right-module endofunctors. Later, we gently introduce bicategories, pseudofunctors, transformations and modifications and explain how these generalise well-known concepts in (monoidal) category theory. Lastly, we consider the bicategorical Yoneda embedding and give a detailed proof that the biequivalence $\B \to \mathsf{str}(\B)$ amounts to the monoidal equivalence \eqref{eq:Psi_intro} when a monoidal category is viewed as a one-object bicategory.

\subsection*{Notation} We write ordinary categories with calligraphic or bold  font, e.g. $\C$, $\mathbf{Cat}$, and 2-categories or bicategories with sans serif font, e.g. $\mathsf{B}$, $\mathsf{C}$.

\subsection*{Acknowledgments} The author would like to thank Roland van der Veen for valuable comments on the manuscript, and to anonymous referees for pointing out the connection with Power's coherence result. Part of this paper is taken from the author's PhD thesis \textit{Universal quantum knot invariants}, written at the University of Groningen.  The author was supported by the ARN project CPJ number ANR-22-CPJ1-0001-0  at the Institut de Mathématiques de Bourgogne (IBM). The IMB receives support from the EIPHI Graduate School (contract ANR-17-EURE-0002).

\section{Monoidal categories}\label{sec:1}

In this section, we review the notions of monoidal category, monoidal functor and monoidal natural transformation. These basic definitions can be found, for instance, in \cite{turaevvirelizier}.

\subsection{Monoidal categories}\label{subsec:mon_cats}

Let $\C$ be a category. A \textit{monoidal structure} on $\C$ is the data of

\begin{enumerate}
\item a functor $$\otimes : \C \times \C \to \C,$$ called the \textit{monoidal product},
\item an object $\bm{1} \in \C$, called the \textit{unit object},
\item a natural isomorphism $$a: \otimes \circ (\otimes \times \id_\C) \overset{\cong}{\Longrightarrow} \otimes \circ (\id_\C \times \otimes  )$$ of functors $\C \times \C \times \C \to \C$, called the \textit{associativity constraint},
\item two natural isomorphisms $$ \ell: \bm{1} \otimes \id_\C \overset{\cong}{\Longrightarrow} \id_\C \qquad , \qquad   r: \id_\C  \otimes \bm{1}  \overset{\cong}{\Longrightarrow} \id_\C$$ 
of functors $\C \to \C$, called the \textit{left and right unitality constraints}, respectively,
\end{enumerate}
with the property that the following two diagrams commute for all objects $X, Y,$ $Z, M$ in $\C$:

\begin{equation}
\tag{\textit{Pentagon axiom}}
\begin{tikzpicture}[commutative diagrams/every diagram]
\node (P0) at (90:2.3cm) {$((X\otimes Y)\otimes Z)\otimes M$};    
\node (P1) at (90+72:2cm) {$(X\otimes (Y\otimes Z))\otimes M$} ;
\node (P2) at (90+2*72:2cm) {\makebox[3ex][r]{$X\otimes ((Y\otimes Z) \otimes M)$}};
\node (P3) at (90+3*72:2cm) {\makebox[3ex][l]{$X\otimes (Y\otimes (Z\otimes M))$}};
\node (P4) at (90+4*72:2cm) {$(X\otimes Y)\otimes (Z\otimes M)$};
\path[commutative diagrams/.cd, every arrow, every label]
(P0) edge node[swap] {$  a_{X,Y,Z} \otimes \id_M $} (P1)
(P1) edge node[swap] {$a_{X, Y \otimes Z, M}$} (P2)
(P2) edge node {$\id_X \otimes a_{Y,Z,M}$} (P3)
(P4) edge node {$a_{X,Y, Z \otimes M}$} (P3)
(P0) edge node {$a_{X \otimes Y, Z,M}$} (P4);
\end{tikzpicture}
\end{equation}

\begin{equation}
\tag{\textit{Triangle axiom}}
\begin{tikzcd}[column sep=0.3cm]
 (X \otimes \bm{1}) \otimes Y \drar[swap]{r_X \otimes \id_Y} \arrow{rr}{a_{X,\bm{1},Y}} &&  X \otimes (\bm{1} \otimes Y ) \dlar{\id_X\otimes\ell_{ Y}} \\
 &     X \otimes Y & 
\end{tikzcd}
\end{equation}

We call the tuple $(\C, \otimes, \bm{1}, a, \ell, r)$ a \textit{monoidal category}.

A monoidal category is said to be \textit{strict} if the associativity, left and right unit constraints are the identity natural transformations, so that $$(X \otimes Y) \otimes Z = X \otimes (Y \otimes Z) \qquad , \qquad        \bm{1} \otimes X =X \qquad , \qquad   X \otimes \bm{1} =X$$ for any objects $X,Y,Z$ in $\C$. We say that $\C$ is \textit{non-strict} if it is not strict.

Because of the Triangle axiom as well as the naturality of associativity and unit constraints, the following commutative triangles hold in any monoidal category $\C$, for any pair of objects $X,Y$ in $\C$:
\begin{equation}\label{eq:triangle2}
\begin{tikzcd}[column sep=0.25cm]
 (\bm{1} \otimes X) \otimes Y \drar[swap]{\ell_X \otimes \id_Y} \arrow{rr}{a_{\bm{1},X,Y}} &&  \bm{1} \otimes ( X \otimes Y ) \dlar{\ell_{X\otimes Y}} \\
 &     X \otimes Y & 
\end{tikzcd}
, 
\begin{tikzcd}[column sep=0.25cm]
 (  X \otimes Y )\otimes \bm{1}   \drar[swap]{r_{X\otimes Y}} \arrow{rr}{a_{X,Y,\bm{1}}} &&   X \otimes (Y \otimes \bm{1}) \dlar{\id_X \otimes r_Y} \\
 &     X \otimes Y & 
\end{tikzcd}
\end{equation}

\subsection{Monoidal functors} Let $(\C, \otimes, \bm{1})$ and $(\D, \boxtimes, \mathds{1})$ be monoidal categories. A \textit{monoidal}  functor from  $\C$ to $\D$  is
the data of
\begin{enumerate}
\item a functor $F: \C \to \D$,
\item a natural transformation $$\gamma^F =\gamma: \boxtimes \circ (F \times F) \Longrightarrow F \circ \otimes $$ of functors $\C \times \C \to \D$,
\item an arrow in $\D$  $$u^F=u: \mathds{1} \to F(\bm{1}),$$
\end{enumerate}
which are compatible with the associativity and left and right unit constraints in the following sense: for any objects $X,Y,Z$ in $\C$, the following diagrams commute:

\begin{equation}
\tag{\textit{Hexagon axiom}}
\begin{tikzcd}[column sep={1cm,between origins}, row sep={1.732050808cm,between origins}]
    & {\makebox[3ex][r]{$(F(X) \boxtimes F(Y)) \boxtimes F(Z)$}} \arrow[rr, "a'_{FX,FY,FZ}"] \arrow[ld, "\gamma_{X,Y} \boxtimes \id_{FZ}"'] && {\makebox[3ex][l]{$F(X) \boxtimes (F(Y) \boxtimes F(Z))$}} \arrow[rd, "\id_{FX} \boxtimes \gamma_{Y,Z}"] &  \\
    F(X \otimes Y) \boxtimes F(Z) \arrow[rd, "\gamma_{X \otimes Y,Z}"']&  &&  & F(X) \boxtimes F(Y \otimes Z) \arrow[dl,"\gamma_{X, Y \otimes Z}"] \\
    & {\makebox[3ex][r]{$F((X \otimes Y) \otimes Z)$}} \arrow[rr, swap, "F(a_{X,Y,Z})"'] && {\makebox[3ex][l]{$F(X \otimes (Y \otimes Z))$}}  & 
\end{tikzcd}
\end{equation}

\begin{equation}\label{eq:4}
\begin{tikzcd}[sep=2.7em]
\mathds{1} \boxtimes F(X) \rar{\ell'_{FX}} \dar[swap]{u \boxtimes \id_{FX}}  & F(X)  &  F(X) \boxtimes\mathds{1} \rar{r'_{FX}} \dar[swap]{\id_{FX} \boxtimes u}  & F(X)  \\ 
F(\bm{1}) \boxtimes F(X) \rar{\gamma_{\bm{1},X}} & F(\bm{1} \otimes X) \uar[swap]{F(\ell_X)} &   F(X)\boxtimes F(\bm{1}) \rar{\gamma_{X,\bm{1}}} & F( X \otimes \bm{1})  \uar[swap]{F(r_X)}
\end{tikzcd}
\end{equation}

\vspace*{10pt}

\noindent where we have written $(a, \ell, r)$ for the constraints of $\C$ and  $(a', \ell', r')$ for the constraints of $\D$.

The composite 
$$
\begin{tikzcd}
(\C, \otimes , \bm{1}) \rar{F} & (\D, \boxtimes, \mathds{1}) \rar{F'} & (\mathcal{E}, \odot, \mathtt{1})
\end{tikzcd}
$$
 of two monoidal functors $(F, \gamma, u) $ and $(F',  \gamma', u')$ is also monoidal with coherence constraints given by the composites
$$
\begin{tikzcd}
\mathtt{1} \rar{u'} & F'(\mathds{1}) \rar{F'(u)} & (F'F)(\bm{1})
\end{tikzcd}
$$
and 
$$
\begin{tikzcd}[column sep=1.2cm]
\odot \circ (F'F\times F'F) \arrow[Rightarrow]{r}{\gamma'(F \times F)} & F' \circ \boxtimes \circ (F \times F) \arrow[Rightarrow]{r}{F'\gamma} & F'F \circ \otimes.
\end{tikzcd}
$$

A monoidal functor $F = (F, \gamma, u): \C \to \D$ as above is \textit{strong} if $\gamma$ is a natural isomorphism of functors and $u$ is an isomorphism in $\D$. We say that $F$ is \textit{strict} if $\gamma$ is the identity natural transformation and $u$ is the identity arrow. If $F$ is not strong or strict, it is usually called \textit{lax} to distinguish it from these other two.

A \textit{monoidal equivalence} between monoidal categories $\C$ and  $\D$ is a strong monoidal functor $F: \C \to \D$ which is an equivalence of ordinary categories. If the functor $F$ is strict, then we call it a \textit{strict monoidal equivalence}.

\subsection{Monoidal natural transformations} Let $F, G: \C \to \D$ be (lax, strong or strict) monoidal functors between monoidal categories $(\C, \otimes, \bm{1})$ and $(\D, \boxtimes, \mathds{1})$. A natural transformation $\alpha: F \Longrightarrow G$ is called \textit{monoidal} if it is compatible with the monoidal constraints of $F$ and $G$ in the sense that the following two diagrams commute for all objects $X, Y$ in $\C$:
$$\begin{tikzcd}[every arrow/.append style={shift left}]
&  \mathds{1}  \arrow{rd}{u'} \arrow{ld}[swap]{u}  &         \\
    F( \bm{1})  \arrow{rr}{\alpha_{\bm{1}}} & & G(\bm{1})
\end{tikzcd} 
\qquad , \qquad
\begin{tikzcd}[every arrow/.append style={shift left}]
  F(X) \boxtimes F(Y) \arrow{r}{\gamma_{X,Y}} \arrow{d}[swap]{\alpha_{X} \boxtimes \alpha_{Y}}  &      F(X \otimes Y) \arrow{d}{\alpha_{X \otimes Y}}   \\
  G(X) \boxtimes G(Y)  \arrow{r}{\gamma'_{X,Y}} &      G(X \otimes Y)    
\end{tikzcd}$$ 
where $F=(F, \gamma, u)$ and $G=(G, \gamma', u')$. If in addition $\alpha$ is a natural isomorphism, we say that it is a \textit{monoidal natural isomorphism}.

\section{Strictification of monoidal categories}\label{sec:2}

In this section we recall Mac Lane's construction of a strict monoidal category $\Cstr$ monoidally equivalent to a given monoidal category $\C$, although we follow Kassel's approach \cite[\S XI.5]{kassel}. We also show that this is the free strict monoidal category generated by $\C$.

\subsection{Construction} \label{subsec:strictification}

Let $(\C, \otimes, \bm{1}, a, \ell, r)$ be a monoidal category. We define the category $\C^{\mathrm{str}}$ as follows: its objects are finite sequences  $S=(X_1, \ldots , X_n)$ of objects of $\C$, $n \geq 0$ (this includes the empty sequence $\emptyset$). If the \textit{parenthesisation} of a sequence $S$ is 
\begin{equation}\label{eq:Par}
Par(S) := ( \cdots (X_1 \otimes X_2) \otimes X_3) \otimes \cdots  ) \otimes X_n \in \C
\end{equation}
for any sequence $S$ of length $n >0$ and $Par(\emptyset): =\bm{1}$, define 
\begin{equation}\label{eq:1}
\hom{\C^{\mathrm{str}}}{S}{S'} := \hom{\C}{Par(S)}{Par(S')},
\end{equation}
that is, the datum of a map $f: S \to S'$ in $\Cstr$ is the same as the datum of a map $Par(f): Par(S) \to Par(S')$ in $\C$. The composite law and identities are given by those of $\C$, so that parenthesisation gives rise to a functor $Par: \Cstr \to \C$. Moreover, there is a canonical full embedding (i.e. fully faithful injective-on-objects functor) $i: \C \hooklongrightarrow \Cstr$ given by $i(X):= (X)$, the length-one sequence whose only object is $X$.
\begin{lemma}\label{lemma:1}
The canonical embedding $$i : \C \hooklongrightarrow \Cstr$$ is an equivalence of categories.
\end{lemma}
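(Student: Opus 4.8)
The plan is to show that $i : \C \to \Cstr$ is fully faithful and essentially surjective, and then invoke the standard fact that such a functor is an equivalence.

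Fully faithfulness is essentially immediate from the definition: by \eqref{eq:1}, $\hom{\Cstr}{i(X)}{i(Y)} = \hom{\Cstr}{(X)}{(Y)} = \hom{\C}{Par((X))}{Par((Y))} = \hom{\C}{X}{Y}$, since $Par$ of a length-one sequence $(X)$ is just $X$. Moreover, under this identification the action of $i$ on morphisms is literally the identity (both composition and identities in $\Cstr$ are defined via those of $\C$), so $i$ induces a bijection on hom-sets. The only thing to check carefully here is that $Par((X)) = X$ on the nose, which holds because the iterated tensor product defining $Par$ degenerates to a single factor when $n=1$.

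For essential surjectivity, I would exhibit, for each object $S = (X_1, \dots, X_n)$ of $\Cstr$, an isomorphism in $\Cstr$ between $S$ and $i(Par(S)) = (Par(S))$. By \eqref{eq:1} a morphism $(Par(S)) \to S$ in $\Cstr$ is the same as a morphism $Par((Par(S))) = Par(S) \to Par(S)$ in $\C$, so the identity $\id_{Par(S)}$ of $\C$ furnishes a canonical isomorphism $i(Par(S)) \cong S$ in $\Cstr$ (its inverse being $\id_{Par(S)}$ read in the other direction). In fact this shows every object of $\Cstr$ is not merely isomorphic but \emph{canonically} isomorphic to one in the image of $i$; one can also note $Par \circ i = \id_\C$, so $Par$ is a quasi-inverse to $i$, with the natural isomorphism $i \circ Par \cong \id_{\Cstr}$ given componentwise by the identities just described. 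Verifying naturality of this transformation is a routine diagram chase using \eqref{eq:1} once more.

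I do not expect a genuine obstacle here: the statement is essentially a bookkeeping consequence of how $\hom{\Cstr}{-}{-}$ was defined via $Par$. The one point that deserves a sentence of care is the interaction of $Par$ with length-one sequences (so that $i$ really does land "inside" the hom-set identification as the identity), and the explicit description of the quasi-inverse; beyond that it is a direct application of the characterisation of equivalences as fully faithful, essentially surjective functors.
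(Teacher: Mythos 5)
Your proposal is correct and follows essentially the same route as the paper: the paper likewise exhibits $Par$ as a quasi-inverse, using $Par \circ i = \id_\C$ and the componentwise-identity natural isomorphism $\delta : \id_{\Cstr} \Rightarrow i \circ Par$ that you describe. Your additional framing via fully faithful plus essentially surjective is a harmless repackaging of the same content.
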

\begin{proof}
Let us see that $Par$ can be taken as a quasi-inverse of $i$. It is clear that $Par \circ i = \id_\C$. Now, for $S \in \Cstr$, let $$\delta_S : S  \to (Par (S))$$ be the unique arrow that corresponds to $\id_{Par(S)}$ under \eqref{eq:1}. These maps assemble into a natural isomorphism $\delta: \id_{\Cstr} \Longrightarrow i \circ Par$ since for an arrow $f: S  \to S'$, the equality $(i\circ Par)(f) \circ \delta_S = \delta_{S'} \circ f$ (the naturality of $\delta$) translates into $f=f$ under \eqref{eq:1}.
\end{proof}

 The category $\C^{\mathrm{str}}$ can be endowed with a strict monoidal structure, as follows: for non-empty sequences $S= (X_1, \ldots , X_n)$ and $S'=(X_{n+1}, \ldots, X_{n+m})$, set $$S*S' := (X_1, \ldots, X_{n+m}),$$ and we also put $S * \emptyset : = S =: \emptyset * S$, where $S$ is possibly empty.

To upgrade the concadenation $*$ to a functor, we must define first a family of natural isomorphisms
\begin{equation}\label{eq:2}
\theta_{S,S'} : Par(S) \otimes Par(S') \to Par (S*S')
\end{equation}
inductively on the length of $S'$. Set $\theta_{ \emptyset,S'} := \ell_{Par (S')}$ and $\theta_{S,\emptyset} := r_{Par (S)}$. For $S'=(X)$ a sequence with one object, we set $$\theta_{S,S'} := \id_{Par(S)\otimes X} : Par(S) \otimes X \to Par(S) \otimes X = Par (S * (X)) .$$ In general for $S'= \bar{S} *(X)$, then we define $\theta_{S,S'}$ as the composite
$$
\begin{tikzcd}
Par(S) \otimes Par (S') \arrow[dashed]{rr}{\theta_{S,S'}} \arrow[equals]{d} && Par (S * S') \arrow[equals]{d} \\
Par(S) \otimes (Par (\bar{S}) \otimes X)  \arrow{dr}{a^{-1}_{Par (S), Par(\bar{S}), X}}   && Par(S * \bar{S}) \otimes X  \\
 &   ( Par (S) \otimes Par(\bar{S})) \otimes X  \arrow{ur}{\theta_{S,\bar{S}}\otimes \id_X}     &
\end{tikzcd}
$$
The naturality of $\theta$ then follows from the naturality of $a, \ell$ and $r$.

Now, given arrows $f: S_1 \to S_2$ and $g: S_1' \to S_2'$ in $\C^{\mathrm{str}}$, we define the arrow $f * g : S_1 * S_1' \to S_2 * S_2'$ as the composite given by the following dashed arrow:
\begin{equation}\label{eq:3}
\begin{tikzcd}
Par(S_1) \otimes Par(S_1') \dar[swap]{f \otimes g} & Par(S_1 * S_1') \arrow{l}[swap]{\theta^{-1}_{S_1,S_1'}}  \arrow[dashed]{d}{Par(f*g)}  \\
Par(S_2) \otimes Par(S_2') \rar{\theta_{S_2, S_2'}} &  Par(S_2 * S_2') 
\end{tikzcd}
\end{equation}
The functoriality of $*$ then follows from functoriality of $\otimes$ by the above diagram. Since the concadenation $*$ is strictly associative and unital, it makes $\C^{\mathrm{str}}$ into a strict monoidal category, where the  unit object is the empty sequence $\emptyset$.

\begin{theorem}[Strictness, \cite{maclane}]\label{thm:1}
Let $\C$ be a monoidal category. The canonical embedding $$i:  \C \overset{\simeq}{\to} \Cstr$$ is a monoidal equivalence of categories.\end{theorem}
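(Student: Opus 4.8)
The plan is to upgrade the equivalence of \cref{lemma:1} to a \emph{monoidal} equivalence by equipping $i$ with the structure of a strong monoidal functor and checking the coherence axioms. Since the unit of $\Cstr$ is the empty sequence $\emptyset = Par(\emptyset)^{-1}$ applied to $\bm 1$ — more precisely $Par(\emptyset) = \bm 1$ — the natural candidate for the unit datum $u : \emptyset \to i(\bm 1) = (\bm 1)$ is the arrow corresponding to $\id_{\bm 1}$ under \eqref{eq:1}; this is an isomorphism in $\Cstr$. For the multiplicativity datum we need a natural isomorphism $\gamma_{X,Y} : i(X) * i(Y) \to i(X \otimes Y)$, i.e. $(X)*(Y) = (X,Y) \to (X \otimes Y)$ in $\Cstr$; by \eqref{eq:1} this is a map $Par(X,Y) = X \otimes Y \to Par(X\otimes Y) = X \otimes Y$ in $\C$, and we take $\gamma_{X,Y} := \id_{X \otimes Y}$, which is visibly a natural isomorphism.

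Next I would verify the three coherence conditions. For the hexagon axiom applied to $i$: after translating along \eqref{eq:1}, all the maps $\gamma_{X,Y}$, $\gamma_{X\otimes Y, Z}$ etc. become identities, the map $F(a_{X,Y,Z})$ becomes $a_{X,Y,Z}$ itself (since $Par$ of the arrow $i(a_{X,Y,Z})$ is $a_{X,Y,Z}$), and the associator $a'$ of $\Cstr$ is the identity because $\Cstr$ is strict; so the hexagon collapses to the tautology $a_{X,Y,Z} = a_{X,Y,Z}$. For the two unit squares \eqref{eq:4}: the left and right unitors $\ell', r'$ of $\Cstr$ are identities by strictness, and after applying $Par$ the square becomes a diagram in $\C$ whose commutativity reduces, via the definition of $u$ and $\gamma$ as identities-under-\eqref{eq:1}, to the naturality/triangle-type identities $F(\ell_X)$ is just $\ell_X$ on the nose and $\ell_{Par(\bm 1, X)}$ composed appropriately — i.e. it becomes the statement that $\ell_X$ equals $\gamma_{\bm 1, X} \circ (u * \id)$ after passing through $Par$, which is again a tautology once one unwinds that $\gamma_{\bm 1,X}$ corresponds to $\id_{\bm 1 \otimes X}$ and $u$ corresponds to $\id_{\bm 1}$. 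I would also double-check that $\theta_{(X),(Y)} = \id$ is consistent with these choices, so that $\gamma$ as defined is genuinely the structure map induced by $\theta$ under the concatenation functor.

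The only real content beyond \cref{lemma:1} is therefore bookkeeping: one must be careful that the isomorphism $u$ and the family $\gamma$ are defined as the \eqref{eq:1}-preimages of the relevant identities rather than as literal identities (they live in $\Cstr$, whose hom-sets are \emph{defined} via $Par$, so "identity" here means "corresponds to $\id$ in $\C$"), and then every coherence diagram, once pushed through the faithful functor $Par$, becomes an equation between arrows of $\C$ that holds trivially. The main obstacle I anticipate is purely notational: keeping straight which identities live in $\C$ versus $\Cstr$ and confirming that strictness of $*$ (associativity and unitality of concatenation on the nose) is what makes $a', \ell', r'$ vanish from the axioms; there is no deep geometric or combinatorial difficulty, so the proof should be short. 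I would conclude by remarking that $i$ being both fully faithful and essentially surjective (by \cref{lemma:1}) together with being strong monoidal is exactly the definition of a monoidal equivalence, completing the proof.
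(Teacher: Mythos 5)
Your proposal is correct and follows essentially the same route as the paper: equip $i$ with the unit map $u$ and multiplication $\eta_{X,Y}$ defined as the \eqref{eq:1}-preimages of $\id_{\bm 1}$ and $\id_{X\otimes Y}$, observe that the hexagon and unit squares collapse to tautologies in $\C$ after applying $Par$, and invoke \cref{lemma:1} for the underlying equivalence. Your extra remark about checking compatibility with $\theta$ is a sensible (and correct) precaution that the paper leaves implicit.
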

\begin{proof}
It is only left to exhibit $i$ as a strong monoidal functor. Let $u: \emptyset \to (\bm{1})$ be the unique map corresponding to $\id_{\bm{1}}$ under \eqref{eq:1}, that is, let $Par(u):=\id_{\bm{1}}$,  and define $$\eta_{X,Y} : (X)*(Y) =(X,Y) \to (X\otimes Y)$$  as $ Par(\eta_{X,Y}):=\id_{X \otimes Y}$. Cleary, $\eta$ assembles into a natural transformation $ \eta: * \circ (i \times i) \Longrightarrow i \circ \otimes$.  The Hexagon axiom and the left and right squares \eqref{eq:4} hold trivially, because their commutativity correspond under \eqref{eq:1} to the equalities $a_{X,Y,Z}=a_{X,Y,Z}$, $\ell_X =\ell_X$ and $r_X=r_X$, respectively.
\end{proof}

\begin{remark}
Our argument, that uses $i$ instead of  $Par$, simplifies the one given by Kassel, as it avoids a lengthy computation to show that $Par$ is strong monoidal \cite[XI.5.2]{kassel}, where the monoidal constraints for $Par$ are given precisely by $\theta$ and $\id_{\bm{1}}$. From our perspective that follows from generalities of monoidal categories (e.g. \cite[\S 1.4.9]{turaevvirelizier}).
\end{remark}


\begin{remark} 
Mac Lane Strictness \cref{thm:1} is equivalent to the celebrated Mac Lane Coherence Theorem, which states that in a monoidal category, any \textit{formal} diagram made out of associativity, left and right unit constraints and identities commute. Here, the word ``formal'' means that no other isomorphism or equality of objects in the category may appear. An easy argument to derive this from the Strictness theorem can be found in \cite{EGNO}.
\end{remark}

\subsection{Properties of strictification}

Let us  now discuss some properties of the previous construction. First,  we will state the universal property of the strictification, which ensures that  the strictification of a monoidal category $\C$ is the free strict  monoidal category generated by $\C$.

\begin{theorem}\label{thm:55}
Let $\C$ be a monoidal category and let $\D$ be a strict monoidal category. Given a strong monoidal functor $F: \C \to \D$, there exists a unique strict monoidal functor $\widehat{F}: \Cstr \to \D$ such that $\widehat{F} \circ i =F$,
$$  
\begin{tikzcd}
\Cstr \arrow[dashed]{rd}{\widehat{F}} & \\
\C \arrow[hook]{u}{i} \rar{F} & \D
\end{tikzcd}
$$
\end{theorem}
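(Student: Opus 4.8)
The plan is to define $\widehat{F}$ explicitly on objects and morphisms, check it is a strict monoidal functor, verify $\widehat{F}\circ i = F$, and then show uniqueness is forced. On objects, since $\D$ is strict and a sequence $S = (X_1,\ldots,X_n)$ must be sent to $\widehat{F}(X_1)\boxtimes\cdots\boxtimes\widehat{F}(X_n)$ with unambiguous (strict) bracketing, and since $\widehat{F}\circ i = F$ forces $\widehat{F}(X) = F(X)$, the only possible definition is
$$\widehat{F}(X_1,\ldots,X_n) := F(X_1)\boxtimes\cdots\boxtimes F(X_n), \qquad \widehat{F}(\emptyset) := \mathds{1}.$$
On morphisms, a map $f : S \to S'$ in $\Cstr$ is by \eqref{eq:1} the same as a map $Par(f) : Par(S)\to Par(S')$ in $\C$; I would define $\widehat{F}(f)$ to be the composite obtained from $F(Par(f)) : F(Par(S)) \to F(Par(S'))$ by pre- and post-composing with the canonical comparison isomorphisms $\widehat{F}(S) \overset{\cong}{\to} F(Par(S))$ built from the strong monoidal structure $\gamma$ of $F$ (iterated, since $\D$ is strict these are well-defined without bracketing ambiguity). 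Concretely, let $\phi_S : F(Par(S)) \to \widehat F(S)$ be the iterated inverse-$\gamma$ isomorphism (for $S$ of length $\le 1$ and $\emptyset$ one uses $\id$ and $u^{-1}$); then set $\widehat{F}(f) := \phi_{S'} \circ F(Par(f)) \circ \phi_S^{-1}$.

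Next I would check that $\widehat{F}$ is a functor: functoriality follows because the $\phi_S$ cancel in composites and $F\circ Par$ is a functor. Then strict monoidality: I need $\widehat{F}(S * S') = \widehat{F}(S)\boxtimes\widehat{F}(S')$, which is immediate from the definition on objects and strictness of $\D$; and $\widehat{F}(f * g) = \widehat{F}(f)\boxtimes\widehat{F}(g)$, which requires comparing the definition of $f * g$ via $\theta$ in \eqref{eq:3} with the comparison maps $\phi$. The key compatibility is that $\phi_{S*S'}$ equals the composite of $\phi_S\boxtimes\phi_{S'}$ with $F(\theta_{S,S'})$ (up to the strong structure of $F$) — this is exactly the statement that the $\phi_S$ are built coherently from $\gamma$, and it is proved by induction on the length of $S'$ using the Hexagon axiom for $F$ and the defining inductive formula for $\theta$. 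Finally, $\widehat{F}$ strict on the unit: $\widehat{F}(\emptyset) = \mathds{1}$ by definition and the unit comparison is the identity, so one checks the map $u$ for $\widehat F$ is $\id_{\mathds 1}$.

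For $\widehat{F}\circ i = F$: on objects $\widehat{F}(i(X)) = \widehat F((X)) = F(X)$; on a morphism $f : X \to Y$ in $\C$, $Par(i(f)) = f$ and $\phi_{(X)} = \id$, so $\widehat F(i(f)) = F(f)$; and the monoidal structure maps match since $\eta_{X,Y}$ (from \cref{thm:1}) corresponds to $\id_{X\otimes Y}$ and $\widehat F$ sends it to $\gamma_{X,Y}$, while $u$ goes to $u$. Uniqueness: any strict monoidal $G : \Cstr \to \D$ with $G\circ i = F$ must satisfy $G(X_1,\ldots,X_n) = G((X_1)*\cdots*(X_n)) = G(i X_1)\boxtimes\cdots\boxtimes G(i X_n) = F(X_1)\boxtimes\cdots\boxtimes F(X_n)$, so $G$ agrees with $\widehat F$ on objects; on morphisms one uses that $\delta_S : S \to (Par(S))$ (from \cref{lemma:1}) and the monoidal comparison isomorphisms express every morphism of $\Cstr$ in terms of morphisms in the image of $i$, forcing $G(f) = \widehat F(f)$.

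The main obstacle I expect is the strict-monoidality check $\widehat F(f*g) = \widehat F(f)\boxtimes\widehat F(g)$, i.e. the coherence lemma that the comparison isomorphisms $\phi_S$ interact correctly with $\theta_{S,S'}$; this is the one genuinely inductive computation and it is where the Hexagon and unit axioms for $F$ (and hence, implicitly, Mac Lane coherence) get used. Everything else is bookkeeping with \eqref{eq:1} and strictness of $\D$.
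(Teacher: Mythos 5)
Your proposal is correct and follows essentially the same route as the paper: your comparison isomorphisms $\phi_S$ are exactly the inverses of the paper's $\beta_S$, and your formula $\widehat F(f)=\phi_{S'}\circ F(Par(f))\circ\phi_S^{-1}$ is the paper's $\widehat{F}(f) = \beta_{S'}^{-1} \circ F(Par(f)) \circ \beta_S$. The only difference is one of emphasis: you spell out the existence half (functoriality, and the inductive compatibility of $\phi$ with $\theta$ via the Hexagon and unit axioms), which the paper compresses into ``by construction'', while the paper dwells longer on the uniqueness step showing $\widehat F(\delta_S)=\beta_S$ is forced.
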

\begin{proof}
Let us write $(\C, \otimes, \bm{1})$ and $(\D, \boxtimes, \mathds{1})$ for the monoidal structures.  For a sequence $S=(X_1, \ldots, X_n)$, since  $\widehat{F}$ (if it exists) is strict monoidal, we must have $$\widehat{F}(S) = F(X_1) \boxtimes \cdots \boxtimes F(X_n) $$ (regardless of parentheses  as $\D$ is strict) and $\widehat{F}(\emptyset)=\mathds{1} $, hence this is the only possible definition for $\widehat{F}$ on objects. To see what $\widehat{F}$ must be in arrows, consider the natural isomorphism $\delta: \id_{\Cstr} \Longrightarrow i \circ Par$ from the proof of \cref{lemma:1}. Given an arrow $f: S \to S'$, we have $(i\circ Par)(f) \circ \delta_S = \delta_{S'} \circ f$ by the naturality of $\delta$ and applying $\widehat{F}$ to this we obtain a commutative diagram
$$
\begin{tikzcd}
[column sep=1.3cm]
\widehat{F}(S) \rar{\widehat{F}(f)} \dar[swap]{\widehat{F}(\delta_S)} & \widehat{F}(S')  \dar{\widehat{F}(\delta_{S'})} \\
F(Par(S)) \rar{F(Par(f))} & F(Par(S'))
\end{tikzcd}
$$

We will show now that for  $\widehat{F}$ as in the statement, the map $\widehat{F}(\delta_S)$ is fully determined by data of $F$, and so we will obtain a single possible definition for $\widehat{F}(f)$.

Let us now define a family of arrows $$\beta_S^F=\beta_S: \widehat{F}(S) \to F(Par(S)).$$ 
 Write $\gamma: \boxtimes \circ (F \times F)  \overset{\cong}{\Longrightarrow} F \circ \otimes $ and $u: \mathds{1} \to F(\bm{1})$ for the coherence data associated to the strong monoidal functor $F$. Inductively on the length of $S$, define $\beta_\emptyset := u$, $\beta_{(X)} := \id_{FX}$ and for $\bar{S}=S*(X)$ let $\beta_{\bar{S}}$ be the composite
$$
\begin{tikzcd}
[column sep={1.6cm}]
\widehat{F}(\bar{S})= \widehat{F}(S) \boxtimes F(X) \rar{\beta_S \boxtimes \id_{FX_n}} & F(Par(S)) \boxtimes F(X) \rar{\gamma_{Par(S), X}} &  F(Par(\bar{S}))
\end{tikzcd}
$$
Now we claim that for any such $\widehat{F}$ we must have
\begin{equation}\label{eq:44}
\beta_S = \widehat{F}(\delta_S).
\end{equation} For we observe first that $\delta_S$ can be described inductively using $\eta$, the monoidal constraint of $i$, in a similar fashion as how $\beta_S$ was defined. On the other hand, the equality of monoidal functors $\widehat{F} \circ i =F$ signifies at the level of  the monoidal constraints that 
$$
\begin{tikzcd}
F(X) \boxtimes F(Y) \dar[equals] \rar{\gamma_{X,Y}} & F(X\otimes Y) \dar[equals]\\
\widehat{F}(X,Y) \rar{\widehat{F}(\eta_{X,Y})} & \widehat{F}(i(X \otimes Y))
\end{tikzcd}
$$
which by induction implies \eqref{eq:44}.

In conclusion, if $\widehat{F}$ is as in the statement, then for any $f: S  \to S'$ in $\Cstr$ we have a commutative diagram
$$
\begin{tikzcd}
[column sep=1.3cm]
\widehat{F}(S) \rar{\widehat{F}(f)} \dar{\rotcong}[swap]{\beta_S} & \widehat{F}(S')  \dar{\beta_{S'}}[swap]{\rotcong} \\
F(Par(S)) \rar{F(Par(f))} & F(Par(S'))
\end{tikzcd}
$$
and so $\widehat{F}(f) = \beta_{S'}^{-1} \circ F(Par(f)) \circ \beta_S$ is the only possible definition for $\widehat{F}(f)$, and moreover we have $\widehat{F} \circ i = F$ as strong monoidal functors by construction.
\end{proof}

A similar result also applies to monoidal natural transformations:

\begin{proposition}\label{prop:str_nat_trans}
Let $\C$ be a monoidal category and let $\D$ be a strict monoidal category. Given a monoidal natural transformation $\alpha: F \Longrightarrow G$ between strong monoidal functors $F, G: \C \to \D$, there exists a unique monoidal natural transformation $\widehat{\alpha}: \widehat{F} \Longrightarrow \widehat{G}$ such that $\widehat{\alpha} i = \alpha$,
$$  
\begin{tikzcd}[row sep=1.6cm, column sep=1.6cm]
\Cstr \arrow[swap,bend right=10,""{name=DD}]{rd}{\widehat{G}}  \arrow[bend left=25,""{name=UU}]{rd}{\widehat{F}} & \arrow[Rightarrow, from=UU,to=DD,shorten=2pt,"\widehat{\alpha}"] \\
\C \arrow[hook]{u}{i} \rar[bend right = 20, swap, ""{name=D}]{G} \rar[bend left = 20, ""{name=U}]{F} & \D \arrow[Rightarrow, from=U,to=D,shorten=2pt,"\alpha"]
\end{tikzcd}
$$
\end{proposition}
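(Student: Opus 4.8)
The plan is to mirror the strategy used in the proof of \cref{thm:55}, transporting the natural transformation $\alpha$ across the equivalence $i$ via the conjugation trick afforded by the isomorphisms $\beta_S$. Concretely, for a sequence $S = (X_1, \ldots, X_n)$ I would define $$\widehat{\alpha}_S := \beta_S^G{}^{-1} \circ \alpha_{Par(S)} \circ \beta_S^F : \widehat{F}(S) \longrightarrow \widehat{G}(S),$$ where $\beta_S^F : \widehat{F}(S) \to F(Par(S))$ and $\beta_S^G : \widehat{G}(S) \to G(Par(S))$ are the isomorphisms constructed in the proof of \cref{thm:55}. First I would check that $\widehat{\alpha}$ is natural: given $f : S \to S'$ in $\Cstr$, the square with sides $\widehat{F}(f)$, $\widehat{G}(f)$, $\widehat{\alpha}_S$, $\widehat{\alpha}_{S'}$ decomposes into the two $\beta$-conjugation squares coming from the formula $\widehat{F}(f) = \beta_{S'}^F{}^{-1} \circ F(Par(f)) \circ \beta_S^F$ (and the analogue for $G$) sandwiching the naturality square for $\alpha$ at the arrow $Par(f)$, which holds since $\alpha$ is a natural transformation.

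Next I would verify that $\widehat{\alpha}$ is \emph{monoidal}, i.e.\ compatible with the strict monoidal constrains $\eta$-type data of $\widehat{F}$ and $\widehat{G}$ (which, since $\widehat{F}, \widehat{G}$ are strict, means the unit square involves the units $\widehat{u}^F, \widehat{u}^G : \emptyset \to \widehat{F}(\emptyset), \widehat{G}(\emptyset)$ — both identities as $\D$ is strict — and the product square involves concatenation $*$). The unit compatibility reduces to $\widehat{\alpha}_\emptyset$ being compatible with $u, u'$, and unwinding the definition $\widehat{\alpha}_\emptyset = (u'_{G})^{-1}\circ \alpha_{\bm 1}\circ u_F$ — wait, more precisely $\beta_\emptyset^F = u$ and $\beta_\emptyset^G = u'$, so $\widehat{\alpha}_\emptyset = u'^{-1}\circ \alpha_{\bm 1}\circ u$, and this equals $\id$ exactly by the unit triangle in the definition of a monoidal natural transformation $\alpha$. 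For the tensor/concatenation compatibility, I would argue by induction on the length of $S'$, using the inductive definition of $\beta$, the hexagon-type compatibility of $\beta$ with $\gamma$ (implicit in the construction of $\beta$ in \cref{thm:55}), the naturality of $\alpha$, and crucially the second (multiplicativity) square in the definition of monoidal natural transformation, which says $\alpha$ commutes with $\gamma$ and $\gamma'$.

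Finally I would check the compatibility condition $\widehat{\alpha} i = \alpha$: on a one-object sequence $(X)$ we have $\beta_{(X)}^F = \id_{FX}$ and $\beta_{(X)}^G = \id_{GX}$, so $\widehat{\alpha}_{(X)} = \alpha_X$, as required; the equality also holds as an equality of monoidal natural transformations since $\widehat{F}i = F$ and $\widehat{G}i = G$ as monoidal functors. For uniqueness, suppose $\widehat{\alpha}'$ is another monoidal (equivalently, just natural) transformation with $\widehat{\alpha}' i = \alpha$. Naturality of $\widehat{\alpha}'$ applied to $\delta_S : S \to (Par(S))$ forces $\widehat{\alpha}'_S = \widehat{G}(\delta_S)^{-1} \circ \widehat{\alpha}'_{(Par(S))} \circ \widehat{F}(\delta_S) = (\beta_S^G)^{-1}\circ \alpha_{Par(S)}\circ \beta_S^F$, using $\widehat{F}(\delta_S) = \beta_S^F$ and $\widehat{G}(\delta_S) = \beta_S^G$ from \eqref{eq:44} and $\widehat{\alpha}'_{(Par(S))} = \alpha_{Par(S)}$ from $\widehat{\alpha}' i = \alpha$; hence $\widehat{\alpha}' = \widehat{\alpha}$.

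I expect the main obstacle to be the inductive verification of the tensor-compatibility square for $\widehat{\alpha}$ with respect to concatenation $*$: one must carefully combine the inductive descriptions of $\beta_S^F$, $\beta_S^G$, $\theta_{S,S'}$ and the defining squares of the monoidal natural transformation $\alpha$, keeping track of how $\gamma$, $\gamma'$ and the associators interact; everything else is a routine diagram chase or an immediate consequence of constructions already set up in \cref{lemma:1} and \cref{thm:55}.
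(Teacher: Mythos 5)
Your proof is correct, and it computes the same transformation as the paper's, but it is organised in the opposite direction. The paper takes $\widehat{\alpha}_S := \alpha_{X_1} \boxtimes \cdots \boxtimes \alpha_{X_n}$ as the definition, so that monoidality of $\widehat{\alpha}$ is immediate and uniqueness follows from the fact that any monoidal natural transformation between strict monoidal functors must split as a $\boxtimes$-product over the concatenation $S = (X_1) * \cdots * (X_n)$; the remaining work is naturality, which the paper establishes via the commutative cube whose left and right faces assert exactly that $\beta_S^G \circ (\alpha_{X_1}\boxtimes\cdots\boxtimes\alpha_{X_n}) = \alpha_{Par(S)} \circ \beta_S^F$. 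You instead take the conjugation formula $\widehat{\alpha}_S := (\beta_S^G)^{-1} \circ \alpha_{Par(S)} \circ \beta_S^F$ as the definition, which makes naturality trivial and shifts all the work onto the monoidality check; the inductive identity you flag as the main obstacle is precisely the paper's left/right cube face, so the two arguments hinge on the same computation (an induction on length combining the inductive definition of $\beta$ with the multiplicativity square for $\alpha$), which the paper also leaves terse. Your uniqueness argument is genuinely different and slightly stronger: by evaluating naturality at $\delta_S$ and invoking $\widehat{F}(\delta_S) = \beta_S^F$ from \eqref{eq:44}, you show $\widehat{\alpha}$ is the unique \emph{natural} transformation with $\widehat{\alpha} i = \alpha$, whereas the paper's uniqueness argument uses monoidality of $\widehat{\alpha}$ as a hypothesis. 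Both routes are valid; yours buys a marginally sharper uniqueness statement at the cost of making the monoidality verification explicit rather than definitional.
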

\begin{proof}
The condition $\widehat{\alpha} i = \alpha$ means that for the length-one sequence $(X)$, we must have $\widehat{\alpha}_{(X)}= \alpha_X$. This implies that for an arbitrary sequence $S=(X_1, \ldots , X_n)$ in $\Cstr$,
$$\widehat{\alpha}_S = \widehat{\alpha}_{(X_1)* \cdots * (X_n)} = \widehat{\alpha}_{(X_1)} \boxtimes \cdots \boxtimes \widehat{\alpha}_{(X_n)} = \alpha_{X_1} \boxtimes \cdots \boxtimes  \alpha_{X_n},$$ where in the second equality we have used that $\widehat{\alpha}$ is a monoidal natural transformation between strict monoidal functors. Besides, $\widehat{\alpha}_\emptyset = \id_{\mathds{1}}$. Therefore, this is the only possible definition. It is only left to check that $\widehat{\alpha}_S :=  \alpha_{X_1} \boxtimes \cdots \boxtimes  \alpha_{X_n}$ is natural on $S$. Given an arrow $f: S \to S'$ in $\Cstr$, let us contemplate the following cube:
\begin{equation*}
\begin{tikzcd}[row sep=scriptsize,column sep=scriptsize]
&  \widehat{F}(S) \arrow[dl,"\beta_S"] \arrow[rr,"\widehat{F}(f)"] \arrow[dd,"\widehat{\alpha}_S" near end] & &   \widehat{F}(S') \arrow[dl,"\beta_{S'}"]\arrow[dd,"\widehat{\alpha}_{S'}"] \\
F(Par (S)) \arrow[rr,crossing over,"\phantom{detectiveco}F(Par(f)) "]\arrow[dd,swap,"\alpha_{Par(S)}"] & & F(Par (S')) \\
&   \widehat{G}(S) \arrow[dl, "\bar{\beta}_S"]\arrow[rr, "\widehat{G}(f) \phantom{holaaaaa}"] &  & \widehat{G}(S') \arrow[dl,"\bar{\beta}_{S'}"] \\
G(Par (S)) \arrow[rr," \phantom{ee}G(Par(f))"] & & G(Par (S'))\arrow[from=uu,"\alpha_{Par(S')}" near start,crossing over]
\end{tikzcd}
\end{equation*}
We have put $\beta= \beta^F$ and $\bar{\beta}= \beta^G$. The top and bottom faces commute by the definition of $\widehat{F}$ and $\widehat{G}$, the left and right faces commute because of the monoidality of $\alpha$, and the front face also commute by the naturality of $\alpha$. Since  $\beta_S, \beta_{S'}, \bar{ \beta}_S$ and $\bar{ \beta}_{S'}$  are isomorphisms, this implies the commutativity of the back face, which is precisely the naturality of $\widehat{\alpha}$.
\end{proof}

\subsection{A common realisation}

Many examples ``in nature'' of non-strict monoidal categories, especially in quantum topology, have as the collection of objects the free unital magma $\mathrm{Mag}(X)$ on some set $X$. We would like to make the previous construction more transparent in this case.

Suppose that  $\C$ is a  monoidal category with $\mathrm{ob}(\C) = \mathrm{Mag}(X)$ and  monoidal product given by the magma product. Let us now define a new category  $\widetilde{\C^{\mathrm{str}}}$. Its objects are given by the elements of the free monoid $\mathrm{Mon}(X)$ on $X$. Given an object $w=x_1 \cdots x_n $  of $\widetilde{\C^{\mathrm{str}}}$, its \textit{sequencing} is the object of $\C^{\mathrm{str}}$  $$ Seq (w):= (x_1 , \ldots , x_n ) \in \C^{\mathrm{str}} ,$$ with $Seq (\emptyset) := \emptyset$\footnote{Note that the first $\emptyset$ refers to the empty word in $\mathrm{Mon}(X)$ whereas the second $\emptyset$ refers to the empty sequence in $\C^{\mathrm{str}}$.}. The set of arrows in  $\widetilde{\C^{\mathrm{str}}}$ is defined as $$\hom{\widetilde{\C^{\mathrm{str}}}}{w}{w'} := \hom{\C^{\mathrm{str}}}{Seq(w)}{Seq(w')} .$$ As before, we readily see that $Seq$ extends to a fully faithful functor $$Seq: \widetilde{\C^{\mathrm{str}}} \to \C^{\mathrm{str}}.$$
We define a monoidal product on $\widetilde{\C^{\mathrm{str}}}$ as follows: on objects, it is simply given by the monoid product, $w \star w' := ww'$. Now observe that if $w=x_1 \cdots x_n $ and $w'=y_1 \cdots y_m $, then 
\begin{align*}
Seq (w \star w') &= Seq (x_1 \cdots x_n y_1 \cdots y_m) = (x_1, \ldots , x_n, y_1, \ldots , y_m) \\&=(x_1, \ldots , x_n)* (y_1, \ldots , y_m) = Seq (w)*Seq(w').
\end{align*}
This observation allows us to define the monoidal product of arrows: given $f: w_1 \to w_2$ and $g: w_1' \to w_2'$, then $f \star g: w_1 \star w_2 \to w_1' \star w_2'$ is given by $$Seq (w_1 \star w'_1) = Seq(w_1)*Seq(w_1') \overset{f*g}{\to} Seq(w_2)*Seq(w_2') = Seq (w_2 \star w_2').$$ Setting the empty word $\emptyset$ as the unit, all this data determines a strict monoidal structure on $\widetilde{\C^{\mathrm{str}}}$ such that $Seq$ is naturally a strict monoidal functor.

Let us summarise our findings:

\begin{proposition}
Let $\C$ be a  monoidal category with $\mathrm{ob}(\C) = \mathrm{Mag}(X)$ and  mo\-noidal product given by the magma product. Then there is a strict monoidal equivalence $$Seq: \widetilde{\C^{\mathrm{str}}} \overset{\simeq}{ \to} \C^{\mathrm{str}}$$
between $\C^{\mathrm{str}}$ and a  (strict) monoidal category $\widetilde{\C^{\mathrm{str}}}$ such that $\mathrm{ob}(\widetilde{\C^{\mathrm{str}}})=\mathrm{Mon}(X) $ and whose monoidal product is given by the monoid product.
\end{proposition}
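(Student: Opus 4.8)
Essentially everything in the statement has been verified in the discussion preceding it: the functor $Seq\colon \widetilde{\C^{\mathrm{str}}} \to \C^{\mathrm{str}}$ is fully faithful by construction, it is strict monoidal, and $\mathrm{ob}(\widetilde{\C^{\mathrm{str}}}) = \mathrm{Mon}(X)$ with $w\star w' = ww'$. The one thing left to check is that $Seq$ is \emph{essentially surjective}, for then it is an equivalence of categories, and being strict monoidal it is automatically a strict monoidal equivalence. So the plan is to establish essential surjectivity, which is where the only genuine content lies.

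To that end I would first introduce the \emph{flattening map} $Z\mapsto |Z|$, defined for $Z \in \mathrm{Mag}(X)$ as the word in $\mathrm{Mon}(X)$ obtained by forgetting all brackets and all occurrences of the magma unit; recalling that the unit object $\bm{1}$ of $\C$ is the magma unit, this amounts to $|\bm{1}| := \emptyset$, $|x| := x$ for $x\in X$, and $|Z_1 \cdot Z_2| := |Z_1|\,|Z_2|$. The key claim is that for every $Z \in \mathrm{Mag}(X)$ there is an isomorphism $\varphi_Z \colon Z \toiso Par(Seq(|Z|))$ in $\C$, built from the associativity and unitality constrains of $\C$. This I would prove by structural induction on $Z$: the base cases $Z = \bm{1}$ and $Z = x\in X$ are handled by identities (note $Par(Seq(\emptyset)) = \bm{1}$ and $Par(Seq(x)) = x$), and if $Z = Z_1\cdot Z_2 = Z_1\otimes Z_2$ one takes $\varphi_Z$ to be the composite of $\varphi_{Z_1}\otimes\varphi_{Z_2}$ with the natural isomorphism $\theta_{Seq(|Z_1|),\,Seq(|Z_2|)}$ of \eqref{eq:2}, using that $Seq(|Z_1|)*Seq(|Z_2|) = Seq(|Z_1|\,|Z_2|) = Seq(|Z|)$. (Alternatively, $\varphi_Z$ exists directly by Mac Lane's coherence theorem \cite{maclane}, since $Z$ and $Par(Seq(|Z|))$ are two formal $\otimes$-expressions built from the same word in the letters of $X$.)

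Granting the claim, essential surjectivity is immediate: given any $S\in\C^{\mathrm{str}}$, \cref{lemma:1} furnishes an isomorphism $\delta_S\colon S\toiso (Par(S))$ in $\C^{\mathrm{str}}$, and since $Par(S)$ is an element of $\mathrm{Mag}(X)$, under the identification \eqref{eq:1} the arrow $\varphi_{Par(S)}$ is exactly an isomorphism $(Par(S))\toiso Seq(|Par(S)|)$ in $\C^{\mathrm{str}}$; composing, $S$ is isomorphic in $\C^{\mathrm{str}}$ to the object $Seq(|Par(S)|)$, which lies in the image of $Seq$. (Equivalently, one may write $S=(Z_1)*\cdots*(Z_n)$ and use functoriality of $*$ together with the $\varphi_{Z_i}$ to get $S\cong Seq(|Z_1|\cdots|Z_n|)$ directly.) Hence $Seq$ is an equivalence, which completes the proof.

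I do not anticipate a real obstacle: the argument is in essence an application of coherence. The only point needing a little care is the bookkeeping around the unit, since the free \emph{unital} magma and the free monoid handle it differently — and this is precisely what the flattening map absorbs.
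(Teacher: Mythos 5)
Your proposal is correct and takes essentially the same route as the paper: the paper also reduces everything to essential surjectivity and constructs, by structural induction on $v\in\mathrm{Mag}(X)$ using the natural isomorphism $\theta$ of \eqref{eq:2}, an isomorphism $\rho_v\colon v\toiso Par(Seq(U(v)))$ in $\C$, which is exactly your $\varphi_Z$ with $U$ playing the role of your flattening map. Your final assembly via $\delta_S$ and $\varphi_{Par(S)}$ is only a cosmetic variant of the paper's direct comparison of $Par(S)$ with $Par(Seq(U(v_1)\cdots U(v_n)))$.
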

\begin{proof}
It is only left to check that $Seq$ is essentially surjective. Let $S=(v_1, \ldots , v_n)$ be an element of  $\C^{\mathrm{str}}$, where $v_i \in \mathrm{Mag}(X) $. If $U: \mathrm{Mag}(X) \to \mathrm{Mon}(X)$ is the canonical map that forgets parentheses, then we will exhibit an isomorphism $Seq (U(v_1) \cdots U(v_n)) \cong S$ in $\Cstr$, or in other words, an isomorphism $$Par(Seq (U(v_1) \cdots U(v_n)) )\cong Par (S)$$ in $\C$. For this, it suffices to show that if $v \in \mathrm{Mag}(X) $ and $U(v)=x_1 \cdots x_p$, then there is an isomorphism $$\rho_v: v \toiso ( \cdots ( x_1 x_2) x_3) \cdots )x_p = Par ( Seq (U(v))) $$ in $\C$. 

Let $\gamma: \star \circ (Par \times Par) \Longrightarrow Par \circ * $ be the natural isomorphism \eqref{eq:2}, where we also denote by $\star$ the monoidal product on $\C$. We will define this isomorphism  inductively on the length $|v|$ of $v$: put $\rho_\emptyset:= \id_\emptyset$ and $\rho_x :=\id_x$. Suppose we have defined $\rho$ for objects of $\C$ of length less than $n$. Let $v \in  \mathrm{Mag}(X)$ such that $|v|=n>1$, and let $v_1, v_2 \in  \mathrm{Mag}(X)$ the unique elements of positive length such that $v=v_1 v_2$. Then define $\rho_v$ as the composite
$$\begin{tikzcd}[row sep={0.2cm}, column sep={2.5cm}]
v= v_1 \star v_2 \rar{\rho_{v_1}\star \rho_{v_2}} &  Par ( Seq (U(v_1))) \star Par ( Seq (U(v_2)))\\
\phantom{v= v_1 \star v_2} \arrow{r}{\gamma_{Seq(U(v_1)),Seq(U(v_2))}} &   Par \big( Seq(U(v_1)) * Seq (U(v_2))    \big)   \\
\phantom{v= v_1 \star v_2 } \arrow[equals]{r} & Par ( Seq (U(v))) 
\end{tikzcd}$$
Since $\gamma$ is a natural isomorphism, $\rho_v$ is an isomorphism as well.
 \end{proof}

\section{Non-strictification of monoidal categories}\label{sec:3}

The goal of this section is to show, using an analogous  construction to the one in the previous section, that any monoidal category is monoidally equivalent to a non-strict one. Our construction is inspired by \cite{habiromassuyeau}. Later we will similarly prove the universal property of this construction.

\subsection{Main construction}\label{subseq:non-strict}

Let $\mathrm{Mag}(\bullet)$ be the free unital magma generated by the singleton $\bullet$. Recall that elements of $\mathrm{Mag}(\bullet)$ are parenthesised sequences of bullets (included the empty sequence), e.g. $((\bullet\bullet)\bullet)(\bullet\bullet)$. Forgetting parenthesis gives a map $| - |: \mathrm{Mag}(\bullet) \to \N$ that counts the number of bullets.

Given a monoidal category $(\C, \otimes, \bm{1}, a, \ell, r)$, we define $\C_q$ as follows: its objects are pairs $(S, t)$, where $S$ is a finite sequence of objects of $\C$, $t \in \mathrm{Mag}(\bullet)$ and $|t| = |S|$, where $|S|$ is the length of $S$. For such a pair  $(S, t)$, define its \textit{parenthesisation} $Par(S, t) \in \C$ as the object in $\C$ given by inserting the $i$-th element of $S$ in the $i$-th bullet of $t$ and inserting tensor products between them, e.g. $$ Par((X_1, \ldots , X_5), ((\bullet\bullet)\bullet)(\bullet\bullet))  =  ((X_1 \otimes  X_2)\otimes X_3) \otimes (X_4 \otimes X_5),  $$ and $Par(\emptyset, \emptyset) := \bm{1}$. More precisely, $Par(S,t)$  is defined by the following inductive rules:
\begin{enumerate}
\item $Par(\emptyset, \emptyset) := \bm{1}$,
\item $Par((X), \bullet):= X$,
\item Given $(S,t) \in \C_q$ with $|t|>1$, there exist unique $t_1,t_2 \in \mathrm{Mag}(\bullet)$  with $|t_1|, |t_2| \geq 1$, $|t_1|+ |t_2|= |t|$ such that $t=t_1 t_2$. If $S_1, S_2$ are the unique subsequences of $S$ of lengths $|t_1|, |t_2|$ respectively such that $S=S_1 * S_2$, then define $$Par(S,t) := Par (S_1, t_1) \otimes Par (S_2,t_2).$$
\end{enumerate}

The set of morphisms in $\C_q$ is given by
\begin{equation}\label{eq:arrows_in_Cq}
\hom{\C_q}{(S,t)}{(S',t')} := \hom{\C}{Par (S,t)}{Par (S',t')}
\end{equation}
with the composite and identities determined by those of $\C$. Once again, the datum of a map $f: (S,t) \to (S',t')$ in $\C_q$ is the same as the datum of a map $Par(f): Par(S,t) \to Par(S',t')$  in $\C$. This means that the parenthesisation gives rise to a functor $Par: \C_q \to \C$. On the other hand, note that there is a canonical full embedding $j: \C \hooklongrightarrow \C_q$ defined by $j(X) :=((X), \bullet)$.

\begin{lemma}\label{lemma:007}
The canonical embedding $$j: \C \hooklongrightarrow \C_q$$ is an equivalence of categories.
\end{lemma}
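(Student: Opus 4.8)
The plan is to mimic the proof of \cref{lemma:1} almost verbatim, using the functor $Par \colon \C_q \to \C$ as a quasi-inverse for $j$. First I would observe that $Par \circ j = \id_\C$ holds on the nose: for an object $X$ we have $Par(j(X)) = Par((X),\bullet) = X$ by rule (2) of the inductive definition of $Par$, and on morphisms it is immediate from the identification \eqref{eq:arrows_in_Cq}, since a map $f\colon X \to Y$ in $\C$ is sent to the map $j(f)\colon ((X),\bullet)\to((Y),\bullet)$ whose parenthesisation is $f$ itself.

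For the other composite, I would construct a natural isomorphism $\varepsilon \colon \id_{\C_q} \Longrightarrow j \circ Par$. For each object $(S,t)\in\C_q$, let $\varepsilon_{(S,t)}\colon (S,t)\to ((Par(S,t)),\bullet)$ be the unique arrow of $\C_q$ corresponding, under the identification \eqref{eq:arrows_in_Cq}, to the identity morphism $\id_{Par(S,t)}$ in $\C$ (this makes sense because $Par((Par(S,t)),\bullet) = Par(S,t)$ by rule (2)). To check naturality, given $f\colon (S,t)\to(S',t')$ in $\C_q$ one must verify $(j\circ Par)(f)\circ \varepsilon_{(S,t)} = \varepsilon_{(S',t')}\circ f$; but applying $Par$ to both sides and using \eqref{eq:arrows_in_Cq} together with functoriality of $Par$, this translates into the tautology $Par(f) = Par(f)$ in $\C$. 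Each $\varepsilon_{(S,t)}$ is an isomorphism because its parenthesisation $\id_{Par(S,t)}$ is, and $Par$ reflects isomorphisms (being fully faithful by construction). Hence $\varepsilon$ is a natural isomorphism, and together with $Par\circ j = \id_\C$ this exhibits $j$ as an equivalence of categories.

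I do not expect any genuine obstacle here: the argument is a routine transcription of \cref{lemma:1}, the only cosmetic difference being that objects of $\C_q$ carry the extra magma-datum $t\in\mathrm{Mag}(\bullet)$, which plays no role in the equivalence since $Par$ simply records how the tensor factors are bracketed. The one point deserving a line of care is that $Par\colon \C_q\to\C$ is indeed a well-defined functor that is full and faithful — but this is exactly what the definition \eqref{eq:arrows_in_Cq} provides, so it can be invoked directly. Thus the proof reduces to the two displayed verifications above.
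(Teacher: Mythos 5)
Your proof is correct and follows exactly the paper's argument: take $Par$ as a quasi-inverse, note $Par\circ j=\id_\C$, and define the natural isomorphism $\id_{\C_q}\Longrightarrow j\circ Par$ componentwise by the arrow corresponding to $\id_{Par(S,t)}$ under \eqref{eq:arrows_in_Cq}, with naturality reducing to a tautology. No differences worth noting.
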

\begin{proof}
We will prove that $Par$ is a quasi-inverse of $j$. On the one hand, we have $Par \circ j = \id_\C$. On the other hand, for $(S,t) \in \C_q$, define $$\delta_{(S,t)}: (S,t) \to (Par(S,t), \bullet)$$ as $Par(\delta_{(S,t)}):= \id_{Par(S,t)}$. As in \cref{lemma:1}, these maps trivially assemble into a natural transformation $\delta: \id_{\C_q} \Longrightarrow j \circ Par$, which gives the result.
\end{proof}


Let us now endow $\C_q$ with a canonical non-strict monoidal structure. Given objects $(S,t), (S',t')$ in $\C_q$, set $$(S,t) *(S',t') := (S * S', tt').$$ We also agree that $(S,t)* (\emptyset, \emptyset) := (S,t) =: (\emptyset,\emptyset) * (S,t)$. We immediately see from the definition of $Par$ that
\begin{equation}\label{eq:gamma_id}
Par((S,t) *(S',t')) = Par (S * S', tt') = Par(S,t) \otimes Par(S',t').
\end{equation}

We can now define $*$ on arrows directly: given  $f: (S_1, t_1) \to (S_2,t_2)$ and $g: (S_1',t_1') \to (S_2',t_2)$, we define $f*g$ as $ Par(f*g):= f \otimes g$. More precisely, the arrow $f *g : (S_1, t_1)* (S_1',t_1') \to (S_2, t_2)* (S_2',t_2')$ is determined by the composite
$$\begin{tikzcd}[row sep={0.2cm}]
Par ((S_1, t_1)* (S_1',t_1')) \arrow[equals]{r} &   Par(S_1, t_1) \otimes Par(S'_1, t'_1)\\
\phantom{Par ((S_1, t_1)* (S_1',t_1'))} \rar{f \otimes g} &  Par(S_2, t_2) \otimes Par(S'_2, t'_2)\\
\phantom{Par ((S_1, w_1)* (S_1',w_1'))} \arrow[equals]{r} & Par ((S_2, t_2)* (S_2',t_2'))
\end{tikzcd}$$
The functoriality of $*$ now follows directly from that of $\otimes$. The unit for this monoidal product is given by $(\emptyset, \emptyset)$. We define the left and right unitality constraints as the identity natural isomorphisms, so that $*$ is strictly left and right unital. The associativity constraint $a_q$ 
$$\begin{tikzcd}
 \left[ (S,t)* (S',t') \right] * (S'',t'') \arrow[equals]{d} \rar{a_q} &  (S,t)*  \left[(S',t')* (S'',t'') \right] \arrow[equals]{d} \\
(S*S'*S'' , (tt')t'') & (S*S'*S'' , t(t't''))
\end{tikzcd}$$
is defined as $Par(a_q):=a_{Par(S,t), Par(S',t'), Par(S'',t'')}$. All these elements make $\C_q$ into a monoidal category: the Triangle axiom holds trivially and the Pentagon axiom follows from that for $\C$. Note that this is a non-strict monoidal category, even if $\C$ is strict, since $(tt')t''$ and $t(t't'')$ are in general different elements in $\mathrm{Mag}(\bullet)$.



\begin{theorem}\
The previous  non-strict monoidal structure on $\C_q$ makes $$j: \C \overset{\simeq}{\to} \C_q$$  a monoidal equivalence of categories.
\end{theorem}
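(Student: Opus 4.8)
The plan is to follow verbatim the strategy of \cref{thm:1}: we already know from \cref{lemma:007} that $j$ is an equivalence of underlying categories, so the only thing left is to promote $j$ to a strong monoidal functor compatible with the monoidal structures $(\C, \otimes, \bm 1, a, \ell, r)$ and $(\C_q, *, (\emptyset,\emptyset), a_q, \id, \id)$. Since $j(X) = ((X),\bullet)$, we have $j(X)*j(Y) = ((X,Y), \bullet\bullet)$ and $Par(j(X)*j(Y)) = X \otimes Y = Par(j(X\otimes Y))$, so the tautological identification \eqref{eq:arrows_in_Cq} furnishes a map $\gamma_{X,Y}\colon j(X)*j(Y) \to j(X\otimes Y)$ defined by $Par(\gamma_{X,Y}) := \id_{X\otimes Y}$, and likewise a map $u\colon (\emptyset,\emptyset) \to ((\bm 1),\bullet) = j(\bm 1)$ with $Par(u):=\id_{\bm 1}$. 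Naturality of $\gamma$ in $X$ and $Y$ is immediate because, under \eqref{eq:arrows_in_Cq}, the relevant naturality square becomes the identity $f\otimes g = f \otimes g$ in $\C$.

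Next I would verify the three coherence diagrams for $(j,\gamma,u)$, namely the Hexagon axiom and the two squares \eqref{eq:4}. In each case the point is that \emph{every} arrow in $\C_q$ appearing in these diagrams is, via $Par$, literally the corresponding structural arrow of $\C$: $Par(a_q) = a$ by definition of $a_q$, $Par$ of the left and right unitors of $\C_q$ are $\id$ (since $*$ is strictly unital) but they sit in the diagram next to $F(\ell_X) = \ell_X$ and $F(r_X)=r_X$, and $Par(\gamma) = \id$, $Par(u)=\id$. Hence, applying the faithful functor $Par$, the Hexagon for $j$ collapses to the identity $a_{X,Y,Z} = a_{X,Y,Z}$, and the two squares \eqref{eq:4} collapse to $\ell_X = \ell_X$ and $r_X = r_X$ respectively; since $Par$ is faithful (indeed \eqref{eq:arrows_in_Cq} is a bijection on hom-sets), these equalities in $\C$ lift back to the required commutativities in $\C_q$. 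Finally, $\gamma$ and $u$ are isomorphisms because $\id_{X\otimes Y}$ and $\id_{\bm 1}$ are, so $j$ is strong, and combined with \cref{lemma:007} this exhibits $j$ as a monoidal equivalence.

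I do not expect a genuine obstacle here — the construction of $\C_q$ was rigged precisely so that $Par$ is a strict-ish monoidal functor with the identifications \eqref{eq:gamma_id} and the definition $Par(a_q):=a$, making all coherence conditions for $j$ tautological after applying $Par$. The only mild subtlety worth spelling out is the bookkeeping that the left/right unitors of $\C_q$ are identities while those of $\C$ are $\ell,r$: one should note that in the squares \eqref{eq:4} for the functor $j$, the vertical map $u\boxtimes \id$ becomes $u * \id_{j(X)}$ whose $Par$ is $\id_{\bm 1}\otimes \id_X$, the map $\ell'_{FX}$ becomes the identity unitor of $\C_q$ whose $Par$ is $\id$, and $F(\ell_X)$ becomes $j(\ell_X)$ whose $Par$ is $\ell_X$, so commutativity after $Par$ reads $\ell_X \circ (\id_{\bm 1}\otimes\id_X) = \id_X \circ \ell_X$, which is just $\ell_X = \ell_X$; the right square is symmetric. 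This is exactly parallel to the corresponding step in \cref{thm:1}, so the proof can be kept very short by referring back to it.
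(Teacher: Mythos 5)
Your proposal is correct and follows essentially the same route as the paper: define the monoidal constraints of $j$ by declaring their images under $Par$ to be identities, and observe that the Hexagon axiom and the squares \eqref{eq:4} become the tautologies $a_{X,Y,Z}=a_{X,Y,Z}$, $\ell_X=\ell_X$, $r_X=r_X$ under the hom-set identification \eqref{eq:arrows_in_Cq}. Your extra bookkeeping on the unit squares is more explicit than the paper's one-line appeal to \cref{thm:1}, but it is the same argument.
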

\begin{proof}
It suffices to exhibit $j$ as a strong monoidal functor. Let $u:(\emptyset , \emptyset) \to ((\bm{1}), \bullet)$ be given by $Par(u):= \id_{\bm{1}}$, and set $$\eta_{X,Y}: ((X), \bullet) * ((Y),\bullet) = ((X,Y), \bullet  \bullet) \to ((X \otimes Y), \bullet)$$ to be determined by $Par(\eta_{X,Y}):=\id_{X \otimes Y}$.  Trivially, $\eta$ assembles into a natural transformation $ \eta: * \circ (i \times i) \Longrightarrow i \circ \otimes$.  As in \cref{thm:1}, the commutativity of the  Hexagon axiom and the left and right squares \eqref{eq:4} correspond under \eqref{eq:1} to the equalities $a_{X,Y,Z}=a_{X,Y,Z}$, $\ell_X =\ell_X$ and $r_X=r_X$, respectively, so they trivially hold.
\end{proof}

\subsection{Properties of non-strictification}

We now state the main properties of the previous construction, that will say that the non-strictification is the free non-strict monoidal category generated by a monoidal category, in a completely analogous manner as \cref{thm:55} and \cref{prop:str_nat_trans}.

\begin{theorem}\label{thm:unique_nonstr}
Let $\C, \D$ be  monoidal categories. Given a strong monoidal functor $F: \C \to \D$, there exists a unique strict monoidal functor $\widehat{F}: \C_q \to \D$ such that $\widehat{F} \circ j =F$,
$$  
\begin{tikzcd}
\C_q \arrow[dashed]{rd}{\widehat{F}} & \\
\C \arrow[hook]{u}{j} \rar{F} & \D
\end{tikzcd}
$$
\end{theorem}
\begin{proof}
Let us put $(\C, \otimes, \bm{1})$ and $(\D, \boxtimes, \mathds{1})$ for the monoidal structures. If such $\widehat{F}$ exists, then we must have $$ \widehat{F} (\emptyset, \emptyset)  = \mathds{1} \quad , \quad \widehat{F} ((X), \bullet)= F(X) \quad , \quad \widehat{F} ((S_1, t_1)*(S_2, t_2)) = \widehat{F} (S_1, t_1) \boxtimes \widehat{F} (S_2, t_2) $$
so since every object $(S,t)$ with $|S|>1$ can be uniquely written as $(S,t) = (S_1, t_1)*(S_2, t_2)$ with $|S_i|>0$,  the previous equalities inductively determine the only possible definition of $\widehat{F}$ on objects.

To see what $\widehat{F}$ must be on arrows, we argue closely to \cref{thm:55}: if $\delta: \id_{\C_q} \Longrightarrow j \circ Par$ is the natural isomorphism of \cref{lemma:007}, then applying $\widehat{F}$ to the naturality of $\delta$ yields the commutative diagram
$$
\begin{tikzcd}
[column sep=1.3cm]
\widehat{F}(S,t) \rar{\widehat{F}(f)} \dar[swap]{\widehat{F}(\delta_{(S,t)})} & \widehat{F}(S',t')  \dar{\widehat{F}(\delta_{(S',t')})} \\
F(Par(S,t)) \rar{F(Par(f))} & F(Par(S',t'))
\end{tikzcd}
$$
for any arrow $f: (S,t) \to (S',t')$ in $\C_q$. To see that $\widehat{F}(\delta_{(S,t)})$ only depends on $F$, we define a family of  maps $$\beta_{(S,t)}^F=\beta_{(S,t)}: \widehat{F}(S,t) \to F(Par(S,t))$$ inductively on the length as follows: if $\gamma$ and $u$ are the  coherence constraints of $F$, set $\beta_{(\emptyset, \emptyset)} := u$, $\beta_{((X), \bullet)} := \id_{FX}$ and for $(S,t)= (S_1, t_1)*(S_2, t_2)$, let $\beta_{(S,t)}$ be the composite
$$
\begin{tikzcd}[column sep=2.2cm, row sep=0.2cm]
\widehat{F}(S,t) = \widehat{F}(S_1,t_1) \boxtimes \widehat{F}(S_2,t_2) \rar{\beta_{(S_1,t_1)} \boxtimes \beta_{(S_2,t_2)} } & F(Par(S_1,t_1)) \boxtimes F(Par(S_2,t_2))\\
\phantom{\widehat{F}(S,t) = \widehat{F}(S_1,t_1) \boxtimes \widehat{F}(S_2,t_2)} \rar{\gamma_{Par(S_1, t_1), Par(S_2, t_2)}} & F(Par(S_1,t_1) \otimes Par(S_2,t_2)  ) \\
\phantom{\widehat{F}(S,t) = \widehat{F}(S_1,t_1) \boxtimes \widehat{F}(S_2,t_2)} \rar[equals] & F(Par(S,t))
\end{tikzcd}
$$
An argument identical to that given in \cref{thm:55} shows that $$\beta_{(S,t)} = \widehat{F}(\delta_{(S,t)}),$$ hence $\widehat{F}(f) = \beta_{(S',t')}^{-1} \circ  F(Par (f)) \circ \beta_{(S,t)}$ is the only possible definition for $\widehat{F}(f)$. Furthermore, we have $\widehat{F} \circ j = F$ as strong monoidal functors by construction.
\end{proof}

\begin{proposition}\label{prop:nonstr_nat_trans}
Let $\C$ be a monoidal category and let $\D$ be a non-strict monoidal category. Given a monoidal natural transformation $\alpha: F \Longrightarrow G$ between strong monoidal functors $F, G: \C \to \D$, there exists a unique monoidal natural transformation $\widehat{\alpha}: \widehat{F} \Longrightarrow \widehat{G}$ such that $\widehat{\alpha} j = \alpha$,
$$  
\begin{tikzcd}[row sep=1.6cm, column sep=1.6cm]
\C_q \arrow[swap,bend right=10,""{name=DD}]{rd}{\widehat{G}}  \arrow[bend left=25,""{name=UU}]{rd}{\widehat{F}} & \arrow[Rightarrow, from=UU,to=DD,shorten=2pt,"\widehat{\alpha}"] \\
\C \arrow[hook]{u}{j} \rar[bend right = 20, swap, ""{name=D}]{G} \rar[bend left = 20, ""{name=U}]{F} & \D \arrow[Rightarrow, from=U,to=D,shorten=2pt,"\alpha"]
\end{tikzcd}
$$
\end{proposition}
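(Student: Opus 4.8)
The plan is to mirror the argument of \cref{prop:str_nat_trans} exactly, now using the non-strictification data $(j, \beta^F, \beta^G)$ in place of $(i, \beta^F, \beta^G)$. First I would establish uniqueness: the condition $\widehat{\alpha} j = \alpha$ forces $\widehat{\alpha}_{((X),\bullet)} = \alpha_X$ on length-one objects, and since every object $(S,t) \in \C_q$ with $|S|>1$ decomposes uniquely as $(S_1,t_1)*(S_2,t_2)$ with $|S_i|>0$, the fact that $\widehat{\alpha}$ must be a monoidal natural transformation between monoidal functors forces, inductively,
$$\widehat{\alpha}_{(S,t)} = \widehat{\alpha}_{(S_1,t_1)} \boxtimes \widehat{\alpha}_{(S_2,t_2)},$$
together with $\widehat{\alpha}_{(\emptyset,\emptyset)} = \id_{\mathds{1}}$. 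This pins down the only possible definition, so the bulk of the proof is checking that this prescription does define a monoidal natural transformation.

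Next I would define $\widehat{\alpha}_{(S,t)}$ by the above inductive recipe and verify naturality in $(S,t)$. Given $f: (S,t) \to (S',t')$ in $\C_q$, I would contemplate the same cube as in \cref{prop:str_nat_trans}, with vertices $\widehat{F}(S,t)$, $\widehat{F}(S',t')$, $\widehat{G}(S,t)$, $\widehat{G}(S',t')$ on the back face and $F(Par(S,t))$, $F(Par(S',t'))$, $G(Par(S,t))$, $G(Par(S',t'))$ on the front face, connected by the isomorphisms $\beta^F_{(S,t)}, \beta^F_{(S',t')}, \beta^G_{(S,t)}, \beta^G_{(S',t')}$ along the diagonals and by $\alpha_{Par(S,t)}$, $\alpha_{Par(S',t')}$ vertically in front. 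The top and bottom faces commute by the definition of $\widehat{F}(f)$ and $\widehat{G}(f)$ (via the $\beta$'s, as in \cref{thm:unique_nonstr}), the front face commutes by naturality of $\alpha$ at $Par(f)$, and the left and right faces commute by monoidality of $\alpha$ — here one uses that $\beta^F, \beta^G$ are built from $\gamma^F, \gamma^G, u^F, u^G$ by the same pattern as $\widehat{\alpha}$ is built from $\alpha$, so the compatibility of $\alpha$ with the coherence data propagates through the induction. Since all four diagonal $\beta$'s are isomorphisms, the back face commutes, which is precisely the naturality of $\widehat{\alpha}$.

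Finally I would check that $\widehat{\alpha}$ is itself monoidal, i.e.\ compatible with the coherence data of $\widehat{F}$ and $\widehat{G}$: the unit square is immediate since $\widehat{\alpha}_{(\emptyset,\emptyset)} = \id_{\mathds{1}}$ and both $\widehat{F}, \widehat{G}$ send $(\emptyset,\emptyset)$ to $\mathds{1}$ with identity unit constraint, while the tensor square reduces, by the definition $\widehat{\alpha}_{(S_1,t_1)*(S_2,t_2)} = \widehat{\alpha}_{(S_1,t_1)} \boxtimes \widehat{\alpha}_{(S_2,t_2)}$ and the fact that $\widehat{F}, \widehat{G}$ have identity $\gamma$-constraints on these concatenations (by construction in \cref{thm:unique_nonstr}), to the interchange law for $\boxtimes$ in $\D$. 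I expect the one genuine point requiring care — the main obstacle — is verifying that the left and right faces of the cube commute, i.e.\ the identity $\beta^G_{(S,t)} \circ \widehat{\alpha}_{(S,t)} = \alpha_{Par(S,t)} \circ \beta^F_{(S,t)}$, which is where the monoidality hypothesis on $\alpha$ is actually consumed; this is a straightforward induction on $|S|$ using the recursive definitions of $\beta^F$, $\beta^G$, and $\widehat{\alpha}$ together with naturality of $\gamma'$ and the two monoidality squares for $\alpha$, entirely parallel to the strictification case.
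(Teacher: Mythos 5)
Your proposal is correct and follows essentially the same route as the paper: uniqueness is forced by $\widehat{\alpha}j=\alpha$ together with strict monoidality of $\widehat{F},\widehat{G}$ and the unique decomposition of objects $(S,t)$, and naturality is obtained from the same commuting cube (with the $\beta$'s as invertible diagonals) used in the strictification case. Your extra remarks — that the left/right faces are where the monoidality of $\alpha$ is consumed, via an induction through the recursive definitions of $\beta^F,\beta^G$ — make explicit a point the paper leaves implicit, but do not change the argument.
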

\begin{proof}
If such $\widehat{\alpha}$ exists, then it must satisfy
$$ \widehat{\alpha}_{(\emptyset, \emptyset)} = \id_{\mathds{1}}  \quad , \quad \widehat{\alpha}_{((X), \bullet)} = \alpha_{X} \quad , \quad \widehat{\alpha}_{(S,t)*(S',t')} = \widehat{\alpha}_{(S,t)} \boxtimes \widehat{\alpha}_{(S',t')},    $$ and as in the previous arguments these rules inductively determine the value $\widehat{\alpha}_{(S,t)}$ for any object $(S,t)$, so this is the only possible definition. The naturality of $\widehat{\alpha}$ on $(S,t)$ follows as in \cref{prop:str_nat_trans} considering a similar cube and arguing that it must be commutative.
\end{proof}

\subsection{Common realisations}

In nature one finds many examples of strict monoidal categories whose collection of objects is given by the free monoid $\mathrm{Mon}(X)$ on some set $X$. As we did with the strictification, we would like to make the non-strictification more concrete for this particular case.

Suppose that $\C$ is a monoidal category with $\mathrm{ob}(\C) = \mathrm{Mon}(X)$ for some set $X$ and whose monoidal product is given by the monoid product. Define a new category $\widetilde{\C_q}$ as follows: its objects are given by the elements of the free magma $\mathrm{Mag}(X)$ on $X$.  Write $U: \mathrm{Mag}(X) \to \mathrm{Mon}(X)$ for  the canonical map that forgets parentheses and $p: \mathrm{Mag}(X) \to \mathrm{Mag}(\bullet)$ for the magma map induced by the unique map of sets $X \to \bullet$. For $v \in \mathrm{Mag}(X)$, define its \textit{sequencing} as $$Seq (v) := ( (x_1, \ldots , x_n), p(v)) \in \C_q$$ where $U(v)= x_1 \cdots x_n$, and set $Seq (\emptyset) := (\emptyset, \emptyset)$. The hom sets in $\widetilde{\C_q}$ are defined as
$$ \hom{\widetilde{\C_q}}{v}{v'} := \hom{\C_q}{Seq (v)}{Seq (v')} , $$ and the identities and composite law are defined as those in $\C_q$. As before, this gives rise to a fully faithful functor $$Seq: \widetilde{\C_q} \to \C_q.$$
Let us now define a monoidal structure on $\widetilde{\C_q}$: on objects, it is given by the magma product, $v \star v' := vv'$. Now, if $v,v' \in \mathrm{Mag}(X)$, $U(v)= x_1 \cdots x_n$ and $U(v')= y_1 \cdots y_m$, then note that 
\begin{align*}
Seq (v \star v') &= \big(  (x_1, \ldots , x_n, y_1, \ldots, y_m)   ,p(vv')   \big) \\ &=  \big(  (x_1, \ldots , x_n)*( y_1, \ldots, y_m)   ,p(v)p(v')   \big) = Seq (v)* Seq(v')
\end{align*}
This allows us to define the monoidal product $f  \star g$ of morphisms $f: v_1 \to v_2$ and $g: v_1' \to v_2'$ as the composite $$Seq (v_1 \star v'_1) = Seq(v_1)*Seq(v_1') \overset{f*g}{\to} Seq(v_2)*Seq(v_2') = Seq (v_2 \star v_2').$$
Once more, setting the empty word $\emptyset$ as the unit, all this data determines a non-strict monoidal structure on $\widetilde{\C_q}$ such that $Seq$ is naturally a strict monoidal functor.

As before, this implies

\begin{proposition}
Let $\C$ be a  monoidal category with $\mathrm{ob}(\C) = \mathrm{Mon}(X)$ and  mo\-noidal product given by the monoid product. Then there is a strict monoidal equivalence $$Seq: \widetilde{\C_q} \overset{\simeq}{ \to} \C_q$$
between $\C_q$ and a  (non-strict) monoidal category $\widetilde{\C_q}$ such that $\mathrm{ob}(\widetilde{\C_q})=\mathrm{Mag}(X) $ and whose monoidal product is given by the magma product.
\end{proposition}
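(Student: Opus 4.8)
The plan is to argue exactly as in the proof of the corresponding statement for the strictification in \cref{sec:2}. The functor $Seq : \widetilde{\C_q} \to \C_q$ has already been exhibited above as a fully faithful strict monoidal functor, and the monoidal structure on $\widetilde{\C_q}$ is genuinely non-strict because the magma product on $\mathrm{Mag}(X)$ is not associative on the nose, so that $(v\star v')\star v''$ and $v\star(v'\star v'')$ are distinct objects. Hence the only thing left to prove is that $Seq$ is essentially surjective on objects; combined with full faithfulness this makes $Seq$ an equivalence of categories, and being strict monoidal it is then a strict monoidal equivalence.

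The one observation that does all the work is the following: since the monoidal product of $\C$ restricts on objects to the (strictly associative and unital) monoid product of $\mathrm{Mon}(X)$, for every object $(S,t)$ of $\C_q$ with $S=(v_1,\ldots,v_n)$ the object $Par(S,t)\in\C$ is simply the concatenation $v_1v_2\cdots v_n\in\mathrm{Mon}(X)$, \emph{regardless of} the magma element $t$. I would prove this by a short induction on $n$ using the three defining clauses of $Par$ on $\C_q$, the decomposition $t=t_1t_2$, $S=S_1*S_2$ playing no role precisely because the product is associative; the base cases are $Par(\emptyset,\emptyset)=\bm{1}=$ empty word and $Par((X),\bullet)=X$. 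In the same way, for $v\in\mathrm{Mag}(X)$ one obtains $Par(Seq(v))=U(v)$, the underlying word of $v$.

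Given this, essential surjectivity is immediate. For $(S,t)\in\C_q$ as above set $w:=v_1\cdots v_n\in\mathrm{Mon}(X)$ and write $w=x_1\cdots x_p$ with $x_i\in X$; choose any $v\in\mathrm{Mag}(X)$ with $U(v)=w$, say the left-bracketed element $(\cdots((x_1x_2)x_3)\cdots)x_p$, with the conventions $v:=\emptyset$ if $p=0$ and $v:=x_1$ if $p=1$ (note $|p(v)|=|U(v)|=p$, so that $Seq(v)$ is a legitimate object of $\C_q$). Then $Par(Seq(v))=U(v)=w=Par(S,t)$ as objects of $\C$, so the identity morphism $\id_w$ of $\C$ defines, through \eqref{eq:arrows_in_Cq}, an isomorphism $Seq(v)\overset{\cong}{\to}(S,t)$ in $\C_q$. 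Therefore $Seq$ is essentially surjective, hence a strict monoidal equivalence.

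I do not expect any real obstacle here; if anything the argument is \emph{easier} than its strictification counterpart, where one had to manufacture nontrivial isomorphisms $\rho_v$ out of the coherence isomorphism $\theta$. Here the monoidal product of $\C$ is on objects literally the monoid product, so parenthesisation retains no information beyond the underlying word and the required comparison isomorphism can be taken to be an identity morphism. The only points needing minor care are keeping track of the several guises of the empty object ($\emptyset\in\mathrm{Mon}(X)$, $\emptyset\in\mathrm{Mag}(X)$, and $(\emptyset,\emptyset)\in\C_q$) and checking the length condition $|p(v)|=|U(v)|$ so that the chosen preimage is well-formed.
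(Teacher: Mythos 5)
Your proof is correct and follows essentially the same route as the paper: both reduce to essential surjectivity of $Seq$ and use the fact that, since the monoidal product of $\C$ is concatenation on objects, $Par(S,t)$ is the underlying word independently of $t$, so that any parenthesisation $v$ with $U(v)$ equal to that word gives $Par(Seq(v))=Par(S,t)$ on the nose and hence an isomorphism in $\C_q$ via \eqref{eq:arrows_in_Cq}. Your explicit induction justifying the independence of $t$ and your remark contrasting this with the nontrivial $\rho_v$ of the strictification case are just slight elaborations of what the paper asserts directly.
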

\begin{proof}
Again it is left to check that $Seq$ is essentially surjective also in this case. Given an object $(S,t) \in \C_q$, with $S=(w_1, \ldots , w_n)$ and $t \in \mathrm{Mag}(\bullet)$, let $v \in \mathrm{Mag}(X) $ be an arbitrary parenthesisation of the product $w_1 \cdots w_n \in \mathrm{Mon}(X)$. Then since the monoidal product of $\C$ on objects is given by concatenation  we have $$Par(S,t)= w_1 \cdots w_n =Par(Seq(v)),$$ which exhibits an isomorphism $(S,t) \cong Seq (v)$ in $\C_q$.
\end{proof}

The previous result recovers the notion of non-strictification given in \cite[\S 3.3]{habiromassuyeau}. We would like to remark that in this publication, the non-strictification is only defined for strict monoidal categories whose monoid object $\mathrm{ob}(\C)$ is free. Our construction of non-strictification can be therefore viewed as a generalisation of that to arbitrary monoidal categories (strict or not).

\subsection{Main motivation}

Before concluding this section we would like to give some context where the non-strictification of a monoidal category is crucial. This digression is independent of the rest of the paper.

\begin{example}
Let us write $\mathrm{Mon}(+,-)$ for the free monoid on the two-element set $\{ +,- \}$, and let $s,t \in \mathrm{Mon}(+,-)$. A \textit{tangle with source $s$ and target $t$} is a finite system of oriented, disjoint embedded arcs and circles in the strip $\mathbb{R}^2 \times [0,1]$ such that the circle components lie in $\mathbb{R}^2 \times (0,1)$ and the endpoints of the arcs are the points $(1,0,0), (2,0,0), \ldots , (|s|,0,0)$ and $(1,0,1),(2,0,1), \ldots , (|t|,0,1)$. A tangle $T$ is said to be \textit{framed} if it is endowed with a non-singular normal vector field which at the endpoints of the arcs equals the vector $(1,0,0)$. Below we have depicted a framed tangle with source $+-+$ and target $+-+$:

\begin{equation*} 
\centering
\includegraphics[width=0.25\textwidth]{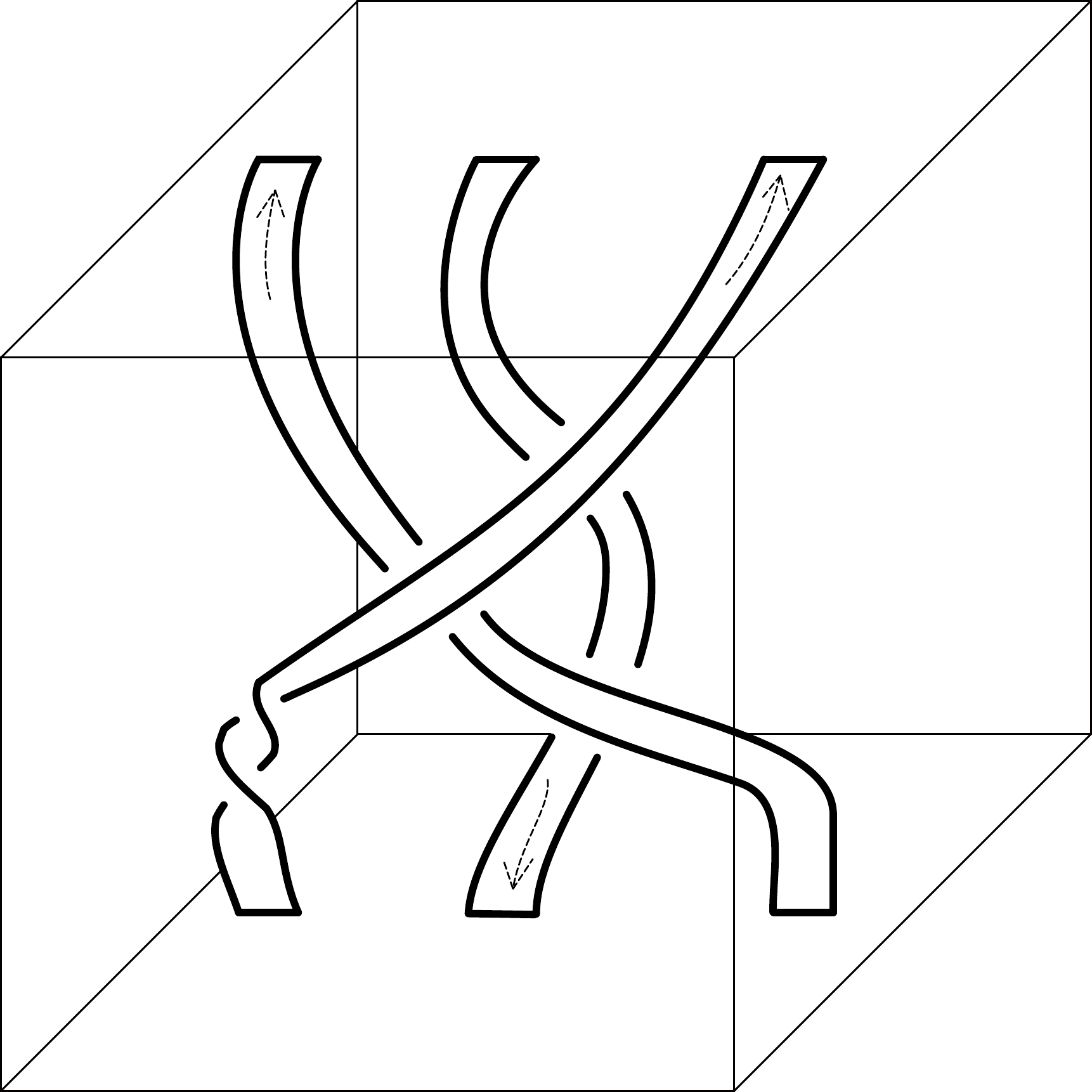}
\end{equation*}
\vspace{0.1cm}\noindent

Two framed tangles with the same source and target are \textit{isotopic} if one can be smoothly deformed into the other within the class of framed tangles with the given source and target. Observe that (framed) knots and links are contained in this framework.

Framed tangles can be organised into a strict monoidal category  $\mathcal{T}$ as follows. The objects are given by the elements of the set $\mathrm{Mon}(+,-)$. An arrow from $s$ to $t$ is an isotopy class of a framed tangle with source $s$ and target $t$. The composition $T_2 \circ T_1$ is obtained by stacking $T_2$ on top of $T_1$ and compressing the result into $\mathbb{R}^2 \times [0,1]$. The identity of an object $w \in \mathrm{Mon}(+,-)$ consists of $|w|$ disjoint vertical arcs with constant framing and orientations determined by $w$.

The category $\mathcal{T}$ is endowed with a canonical monoidal structure. On objects, the monoidal product consists of the monoid product. Given tangles $T,T'$, the monoidal product $T \otimes T'$ is the juxtaposition obtained by placing $T'$ to the right of $T$. The unit of the monoidal stucture is given by the empty word.
\end{example}

The \textit{Kontsevich invariant} \cite{kontsevich} is a very strong graph-valued invariant of framed knots. In fact, it is expected that this invariant classifies knots \cite{ohtsukiproblems}. 

The Kontsevich invariant can be promoted to a strict monoidal functor \cite{BNnonassociative,lemurakami}
$$Z_{\varphi}: \widetilde{{\mathcal{T}}_q} \to \mathcal{A}$$
which extends the invariant to \textit{$q$-tangles} or arrows of the non-strictification $\widetilde{{\mathcal{T}}_q}$. The  category $\mathcal{A}$ is the so-called category of ``Jacobi diagrams in polarised 1-manifolds''. This functor depends on a choice of an infinite power series $\varphi \in \mathbb{Q} \langle \langle X,Y \rangle \rangle$ in two non-commuting variables, called a \textit{Drinfeld series}. The functor $Z_{\varphi}$ is only well-defined on $\widetilde{{\mathcal{T}}_q}$, but not on $\mathcal{T}$. This invariant is related to many other of the so-called quantum knot invariants, see e.g. \cite{becerra_thesis}.

\section{Higher categorical perspective}\label{sec:4}

In this section, we will upgrade the strictification and non-strictification to the 2-categorical level. Throughout this section we will denote as $*$ the discrete category with a single object.

\subsection{2-categories} \label{subsec:2-categories}  A \textit{2-category} $\mathsf{C}$  is the data of
\begin{enumerate}
\item a family of objects $A, B, C, \ldots$ of $\mathsf{C}$,
\item for every pair of objects $A, B \in \mathsf{C}$, a hom-category $\mathsf{C}(A,B)$,
\item for each $A \in \mathsf{C}$, a functor $\id_{A}: * \to \mathsf{C}(A,A)$,
\item for every triple $A,B,C \in \mathsf{C}$, a composition functor (called the \textit{horizontal composition}) $$\circ_{A,B,C}=\circ : \mathsf{C}(B,C) \times \mathsf{C}(A,B) \to \mathsf{C}(A,C)$$
\end{enumerate}
satisfying the following associativity and unital conditions: for every $A,B,C,D \in \mathsf{C}$, we have
\begin{equation}\label{eq:assoc_2-cat}
\circ_{A,B,D}(\circ_{B,C,D} \times \id_{\mathsf{C}(A,B)}) =\circ_{A,C,D}(\id_{\mathsf{C}(C,D)} \times \circ_{A,B,C}) 
\end{equation}
as functors $ \mathsf{C}(C,D) \times \mathsf{C}(B,C) \times \mathsf{C}(A,B) \to \mathsf{C}(A,D),$ and
\begin{align}
& \circ_{A,B,B}(\id_{B} \times \id_{\mathsf{C}(A,B)}) = \id_{\mathsf{C}(A,B)} \\
& \circ_{A,A,B}(\id_{\mathsf{C}(A,B)}\ \times \id_{A}) = \id_{\mathsf{C}(A,B)} \label{eq:rightunit_2-cat}
\end{align}
as functors $\mathsf{C}(A,B) \to \mathsf{C}(A,B)$.

If $\mathsf{C}$ is a 2-category, then the objects $A,B,C, \ldots$ are typically called \textit{0-cells}, the objects of the hom-categories $\mathsf{C}(A,B)$ \textit{1-cells}, and the arrows of these hom-categories  $\mathsf{C}(A,B)$ \textit{2-cells}.

A 2-category is the same as a category enriched over the symmetric monoidal category $(\mathbf{Cat}, \times, *)$ of categories and functors \cite[\S 3]{riehl2014}. Note that a 2-category $\mathsf{C}$ with a single object $\star$ is the same data as an ordinary category $\C := \mathsf{C}(\star, \star)$. More generally, every  2-category $ \mathsf{C}$ has an underlying ordinary category $\mathsf{C}_{0}$ whose objects are those of $\mathsf{C}$ and whose arrows are given  by $$\hom{\mathsf{C}_{0}}{A}{B} := \hom{\mathbf{Cat}}{*}{\mathsf{C}(A,B)},$$ that is, discarding  2-cells.

Notice that in a 2-category, we have the notion of 1-cells and 2-cells being isomorphims, that we call them \textit{ 1-isomorphisms} and \textit{2-isomorphisms}. Besides, we also have the notion of  equivalence of objects: we say that a 1-cell $f: A  \to B$ is an \textit{equivalence} if there exists another 1-cell $g: B \to A$ and two 2-isomorphisms $\alpha_1: g \circ f \to \id_A$ and  $ \alpha_2: f \circ g \to \id_B$, where $\id_C$ of an object $C$ is the 1-cell image of the functor $1_C$.

\subsection{2-functors}\label{subsec:2-functor}

Let  $\mathsf{C}, \mathsf{C'}$ be 2-categories. A \textit{2-functor} $F: \mathsf{C}  \to \mathsf{C'}$  is the data of
\begin{enumerate}
\item for every object $A \in \mathsf{C}$, an object $F(A) \in \mathsf{C'}$,
\item for every pair of objects $A, B \in \mathsf{C}$, a functor $$F_{A,B}: \mathsf{C}(A,B) \to \mathsf{C'}(F(A),F(B))$$
\end{enumerate}
such that for every triple $A,B,C \in \mathsf{C}$ we have
\begin{equation}\label{eq:Anotherdiagram1}
\circ_{FA,FB,FC}(F_{B,C} \times F_{A,B}) = F_{A,C} \circ_{A,B,C} 
\end{equation}
as functors $\mathsf{C} (B,C) \times \mathsf{C}(A,B) \to \mathsf{C}'(FA,FC)  $, and
\begin{equation}\label{eq:Anotherdiagram2}
 \id_{FA} = F_{A,A} \id_{A}
\end{equation}
 as functors $ * \to \mathsf{C}' (FA,FA)$.

Observe that 2-categories and 2-functors form an ordinary category, that we denote by $2 {\text -} \mathbf{Cat} $.

%
%
%

%
%
%
%

\subsection{2-adjunctions} Let $\mathsf{C}, \mathsf{C}'$ be 2-categories.  A \textit{2-adjunction} is a pair $F: \mathsf{C} \to \mathsf{C}'$ and $G: \mathsf{C}' \to \mathsf{C}$ of 2-functors together with a family of isomorphisms of categories   $$\varphi_{A,A'}: \mathsf{C}'(FA, A') \toiso \mathsf{C}(A, GA')$$ for every $A \in \mathsf{C}$ and $A' \in \mathsf{C}'$, which are natural in the sense  that for every $A,B  \in \mathsf{C}$ and $A',B' \in \mathsf{C}'$ the following two diagrams  of categories and functors commute:

\begin{equation*}
\begin{tikzcd}
\mathsf{C}'(FA,A') \times \mathsf{C}(B,A) \arrow{rr}{\id \times F_{B,A}}  \dar[swap]{\varphi_{A,A'}\times  \id} & & \mathsf{C}'(FA,A') \times \mathsf{C}'(FB,FA)   \dar{ \circ}\\
\mathsf{C}(A, GA') \times \mathsf{C}(B,A) \arrow{dr}{\circ} & & \mathsf{C}'(FB,A') \arrow{dl}[swap]{\varphi_{B,B'}} \\
& \mathsf{C}(B,GA') &
\end{tikzcd}
\end{equation*}

$$
\begin{tikzcd}[column sep=1.9cm]
\mathsf{C}'(A',B') \times \mathsf{C}' (FA,A') \rar{G_{A',B'} \times \varphi_{A,A'}}  \dar{\circ} & \mathsf{C}(GA', GB') \times \mathsf{C}(A, GA') \dar{\circ} \\
\mathsf{C}'(FA, B') \rar{\varphi_{A,B'}} & \mathsf{C}(A, GB')
\end{tikzcd}
$$

\subsection{Strictification as a 2-functor}

Let $\mathsf{MonCat}$ be the 2-category whose 0-cells are monoidal categories, whose 1-cells are strong monoidal functors and whose 2-cells are monoidal natural transformations. The composition of 2-cells inside a hom-category is given by vertical composition of the natural isomorphisms, whereas horizontal composition for 2-cells is given by horizontal composition of natural transformations. Similarly, we let $\mathsf{strMonCat}$ (resp. $\mathsf{nonstrMonCat}$) be the 2-categories of strict (resp. non-strict) monoidal categories as 0-cells, strict monoidal functors as 1-cells and monoidal natural transformations as 2-cells.

Our aim is to upgrade the strictification of monoidal categories that we constructed in \cref{subsec:strictification} to a 2-functor 
$$\mathrm{str}: \mathsf{MonCat} \to \mathsf{strMonCat},$$ as follows: given a monoidal category $\C$, let $\mathrm{str}(\C):= \Cstr$. Given a strong monoidal functor $F: \C \to \D$ between monoidal categories, define $\mathrm{str}(F)=F^{\mathrm{str}}$ as the unique strict monoidal functor $F^{\mathrm{str}}:  \Cstr \to \D^{\mathrm{str}}$ making the diagram
$$  
\begin{tikzcd}
\Cstr \arrow[dashed]{rrd}{F^{\mathrm{str}}} & & \\
\C \arrow[hook]{u}{i} \rar{F} & \D \arrow[hook]{r}{i} & \D^{\mathrm{str}}
\end{tikzcd}
$$
commute according to \cref{thm:55}. It is readily seen that for $S=(X_1, \ldots , X_n)$ in $\C$, we have $$ F^{\mathrm{str}}(S) = (FX_1, \ldots , FX_n)$$ and $F^{\mathrm{str}}(\emptyset)= \emptyset$; and  given an arrow $f: S \to S'$ in $\C$, we have that $F^{\mathrm{str}}(f)$ is the following   dashed arrow,
\begin{equation}\label{eq:def66}
\begin{tikzcd}[column sep=2cm]
Par(F^{\mathrm{str}}(S)) \rar[dashed]{Par(F^{\mathrm{str}}(f))} \dar[swap]{\beta_{S}} & Par(F^{\mathrm{str}}(S')) \dar{\beta_{S'}}  \\
F(Par(S)) \rar{F(Par f)} & F(Par(S'))
\end{tikzcd}
\end{equation}
where $\beta$ was defined in the proof of \cref{thm:55}. 
Now, let $\alpha: F \Longrightarrow G$ be a monoidal natural transformation of functors $\C \to  \D.$ Define $\mathrm{str}(\alpha)= \alpha^{\mathrm{str}}$ as the unique monoidal natural transformation making the diagram
$$  
\begin{tikzcd}[row sep=1.6cm, column sep=1.6cm]
\Cstr \arrow[swap,bend right=10,""{name=DD}]{rd}{G^{\mathrm{str}}}  \arrow[bend left=25,""{name=UU}]{rd}{F^{\mathrm{str}}} & \arrow[Rightarrow, from=UU,to=DD,shorten=2pt,"\alpha^{\mathrm{str}}"] \\
\C \arrow[hook]{u}{i} \rar[bend right = 20, swap, ""{name=D}]{iG} \rar[bend left = 20, ""{name=U}]{iF} & \D^{\mathrm{str}} \arrow[Rightarrow, from=U,to=D,shorten=2pt,"i\alpha"]
\end{tikzcd}
$$
commute according to \cref{prop:str_nat_trans}. Unravelling definitions, this means that
 $\alpha^{\mathrm{str}}$ is given by $$\alpha^{\mathrm{str}}_S = i(\alpha_{X_1}) * \cdots * i(\alpha_{X_n}):  F^{\mathrm{str}}(S) \to G^{\mathrm{str}} (S)$$ or, equivalently, by the condition that $$ Par(\alpha^{\mathrm{str}}_S) = ( \cdots ( \alpha_{X_1} \boxtimes \alpha_{X_2}) \boxtimes \cdots ) \boxtimes \alpha_{X_n}  : Par(F^{\mathrm{str}}(S)) \to Par(G^{\mathrm{str}}(S)) $$
where $S=(X_1, \ldots , X_n)$. It is clear from these descriptions that for the vertical composition we have $(\alpha_2 \circ  \alpha_1)^{\mathrm{str}}= \alpha_2^{\mathrm{str}} \circ \alpha_1^{\mathrm{str}} $ and that the identity natural transformation maps to the identity natural transformation, so that for any pair of monoidal categories $\C, \D$,   strictification induces a functor $$\mathrm{str}_{\C, \D} : \mathsf{MonCat} (\C, \D) \to \mathsf{strMonCat} (\Cstr, \D^{\mathrm{str}}).$$

\begin{proposition}\label{prop:222}
Strictification defines a 2-functor
$$\mathrm{str}: \mathsf{MonCat} \to \mathsf{strMonCat}.$$
\end{proposition}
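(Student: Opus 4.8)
The plan is to verify that the assignment $\mathrm{str}$, already defined on $0$-cells, $1$-cells and $2$-cells in the preceding discussion, satisfies the two axioms of a $\mathbf{Cat}$-enriched functor, namely diagrams \eqref{eq:anotherdiagram1} and \eqref{eq:anotherdiagram2} with $\C = \mathbf{Cat}$, $\V = \mathsf{MonCat}$ and $\W = \mathsf{strMonCat}$. Concretely, this amounts to two things: (i) for each pair of monoidal categories $\C, \D$ we have already produced a functor $\mathrm{str}_{\C,\D}\colon \mathsf{MonCat}(\C,\D) \to \mathsf{strMonCat}(\Cstr,\D^{\mathrm{str}})$ (this is recorded just above the statement), so it remains to check the \emph{compatibility with composition of $1$-cells} and the \emph{compatibility with identity $1$-cells}, now at the level of these hom-functors.

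First I would check that $\mathrm{str}$ preserves identities: given a monoidal category $\C$, the identity strong monoidal functor $\id_\C$ has trivial coherence data, so its associated maps $\beta_S$ from the proof of \cref{thm:55} are all identities; hence $(\id_\C)^{\mathrm{str}}$ acts as the identity on objects of $\Cstr$ and, by \eqref{eq:def66}, as the identity on arrows. Thus $(\id_\C)^{\mathrm{str}} = \id_{\Cstr}$. Similarly the identity $2$-cell (the identity monoidal natural transformation on $\id_\C$) is sent to the identity monoidal natural transformation on $\id_{\Cstr}$, directly from the formula $Par(\alpha^{\mathrm{str}}_S) = (\cdots(\alpha_{X_1} \boxtimes \alpha_{X_2})\boxtimes \cdots)\boxtimes \alpha_{X_n}$. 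This handles \eqref{eq:anotherdiagram2}.

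Next I would check compatibility with composition of $1$-cells: given strong monoidal functors $F\colon \C \to \D$ and $G\colon \D \to \mathcal{E}$, I must show $(G \circ F)^{\mathrm{str}} = G^{\mathrm{str}} \circ F^{\mathrm{str}}$ as strict monoidal functors $\Cstr \to \mathcal{E}^{\mathrm{str}}$, and likewise that horizontal composition of $2$-cells is respected. On objects both sides send $S = (X_1,\dots,X_n)$ to $(GFX_1,\dots,GFX_n)$, so agreement is immediate. On arrows, the key point is a compatibility of the $\beta$-maps under composition: writing $\beta^F$, $\beta^G$, $\beta^{G\circ F}$ for the respective families, one shows by induction on the length of $S$ that
\begin{equation*}
\beta^{G \circ F}_S = G(\beta^F_S) \circ \beta^G_{F^{\mathrm{str}}(S)}\colon (G\circ F)^{\mathrm{str}}(S) \longrightarrow (G\circ F)(Par(S)),
\end{equation*}
using the explicit description of the coherence data $\gamma^{G\circ F}$ of a composite monoidal functor recalled in \cref{sec:1} (namely $\gamma^{G\circ F} = G\gamma^F \circ \gamma^G(F \times F)$ and $u^{G\circ F} = G(u^F)\circ u^G$) together with the naturality of $\gamma^G$. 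Plugging this identity into defining square \eqref{eq:def66} for $G \circ F$ and comparing with the composite of the two squares for $F$ and for $G$ then gives $(G\circ F)^{\mathrm{str}}(f) = (G^{\mathrm{str}}\circ F^{\mathrm{str}})(f)$. The analogous statement for horizontal composition of monoidal natural transformations follows from the same $\beta$-compatibility together with the description of $\alpha^{\mathrm{str}}$ as $i(\alpha_{X_1}) * \cdots * i(\alpha_{X_n})$ and functoriality of $*$.

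I expect the main obstacle to be precisely the inductive verification of the $\beta$-compatibility identity above: it requires unwinding the recursive definition of $\beta^{G\circ F}_{\bar S}$ for $\bar S = S * (X)$, substituting the formula for the composite coherence data, and inserting a naturality square for $\gamma^G$ to re-associate the composite into the form $G(\beta^F_{\bar S}) \circ \beta^G_{F^{\mathrm{str}}(\bar S)}$ — routine but the one place where a genuine diagram chase is needed. Everything else (agreement on objects, preservation of identities, and the passage from the $\beta$-identity to the statements about arrows and $2$-cells via \eqref{eq:def66}) is formal. Since all these checks are straightforward, I would present the argument compactly, emphasising the $\beta$-compatibility lemma and leaving the remaining bookkeeping to the reader.
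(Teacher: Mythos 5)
Your proposal is correct and follows essentially the same route as the paper: both reduce the statement to the two enriched-functor axioms \eqref{eq:anotherdiagram1} and \eqref{eq:anotherdiagram2}, dispose of the identity axiom trivially, and hinge the composition axiom on exactly the same key identity $\beta^{G\circ F}_S = G(\beta^F_S)\circ \beta^{G}_{F^{\mathrm{str}}(S)}$, deduced from the formula for the coherence data of a composite monoidal functor and then fed into the defining square \eqref{eq:def66}. The only cosmetic difference is that the paper verifies horizontal composition of $2$-cells by a direct interchange-law computation rather than by invoking the $\beta$-compatibility again, but your variant works equally well.
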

\begin{proof}
It is only left to check the equalities \eqref{eq:Anotherdiagram1} and \eqref{eq:Anotherdiagram2}. The first equality \eqref{eq:Anotherdiagram1} amounts to the equality of functors
\begin{equation}\label{eq:000}
(F_2 \circ F_1)^{\mathrm{str}} =F_2^{\mathrm{str}} \circ F_1^{\mathrm{str}}
\end{equation}
for $F_1: \C \to \D$ and $F_2: \D \to \mathcal{E}$, and the equality of natural transformations
\begin{equation}\label{eq:001}
(\alpha_2 * \alpha_1 )^{\mathrm{str}} = \alpha_2^{\mathrm{str}} * \alpha_1^{\mathrm{str}},
\end{equation}
where if $\alpha_i: F_i \Longrightarrow G_i$ then $\alpha_2 * \alpha_1: F_2 F_1 \Longrightarrow G_2G_1$ denotes the horizontal composition. Now, \eqref{eq:000} holds trivially on objects, and on arrows we consider the following commutative diagram:
\begin{equation*}
\begin{tikzcd}[column sep=1.5cm]
Par(F_2^{\mathrm{str}}F_1^{\mathrm{str}}S) \dar[swap]{\beta^2_{F_1^{\mathrm{str}}S}} \rar{F_2^{\mathrm{str}}F_1^{\mathrm{str}}(f)} & Par(F_2^{\mathrm{str}}F_1^{\mathrm{str}}S') \dar {\beta^2_{F_1^{\mathrm{str}}S'}} \\
F_2(Par(F_1^{\mathrm{str}}S)) \rar{F_2(F_1^{\mathrm{str}}(f))} \dar[swap]{F_2\beta^1_{S}} & F_2(Par(F^{\mathrm{str}}S')) \dar{F_2\beta^1_{S'}}  \\
F_2F_1(Par(S)) \rar{F_2F_1(Par f)} & F_2F_1(Par(S'))
\end{tikzcd}
\end{equation*}
In this diagram, we have put $\beta^i=\beta^{F_i}$. The bottom square is the image under $F_2$ of the square \eqref{eq:def66}  for $F_1^{\mathrm{str}}$, and the top square is the square \eqref{eq:def66} for $F_1^{\mathrm{str}}$  and the arrow $F_1^{\mathrm{str}}(f)$. Now the key point to note is that  $F_2\beta_S^1 \circ \beta^2_{F_1^{\mathrm{str}}S} =\beta_S^{F_2 F_1}$, which readily follows from the definition of the monoidal constraint of the composite of monoidal functors. Moreover, \eqref{eq:001} follows from the following computation (where we have removed the paranthesisation of the strictification of a monoidal natural transformation for clarity), where we take $S=(X_1, \ldots, X_n) \in \Cstr$:
\begin{align*}
(\alpha_2 * \alpha_1 )_{X_1} \odot \cdots &\odot (\alpha_2 * \alpha_1 )_{X_n} = \\ &= [G_2(\alpha_{1,X_1}) \circ \alpha_{2, F_1X_1}] \odot \cdots \odot [G_2(\alpha_{1,X_n}) \circ \alpha_{2, F_1X_n}]\\
&=[G_2(\alpha_{1,X_1}) \odot \cdots \odot G_2(\alpha_{1,X_n})] \circ [\alpha_{2, F_1X_1} \odot \cdots \odot \alpha_{2, F_1X_n}] \\
&=G_2^{^{\mathrm{str}}} ((\alpha_1^{\mathrm{str}})_S) \circ (\alpha_2^{\mathrm{str}})_{F_1^{\mathrm{str}}S}.
\end{align*}
Finally, \eqref{eq:Anotherdiagram2} amounts to the equalities $(\id_{\C})^{\mathrm{str}}=\id_{\Cstr}$ and $(\id_{\id_\C})^{\mathrm{str}}=\id_{\id_{\Cstr}}$, where by $\id_{\id_\C}$ we denote the identity natural transformation of the identity functor $\id_\C$. These hold trivially by the definitions.
\end{proof}

Observe that there is a canonical forgetful 2-functor 
$$U: \mathsf{strMonCat} \to \mathsf{MonCat}$$
sending every strict monoidal category, strict monoidal functor and monoidal natural transformation to their underlying monoidal category, underlying strong monoidal functor and the same natural transformation.

The following result upgrades \cref{thm:55} to the 2-categorical level, stating that the strictification is part of a free-forgetful adjunction at the 2-categorical level:

\begin{theorem}\label{thm:2adj_str}
There is a 2-adjunction
$$\begin{tikzcd}[column sep={4em,between origins}]
\mathsf{MonCat}
  \arrow[rr, bend left, swap, "{\mathrm{str}}"' pos=0.50]
& \bot &  \mathsf{strMonCat}
  \arrow[ll, bend left, swap, "U"' pos=0.50]
\end{tikzcd}$$
\end{theorem}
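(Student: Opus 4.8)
The plan is to produce the adjunction in the hom-isomorphism form matching the definition of $\mathbf{Cat}$-enriched adjunction recalled in \cref{sec:4}: for every monoidal category $\C$ and every strict monoidal category $\D$ I would construct an isomorphism of categories
$$\varphi_{\C,\D}\colon\; \mathsf{strMonCat}(\mathrm{str}(\C),\D)\;\toiso\;\mathsf{MonCat}(\C,U(\D)),$$
and then check that the family $\{\varphi_{\C,\D}\}$ is $\mathbf{Cat}$-natural in both variables. The functor $\varphi_{\C,\D}$ is precomposition with the canonical embedding $i=i_\C\colon\C\hooklongrightarrow\Cstr$: it sends a $0$-cell $\widehat F\colon\Cstr\to\D$ (a strict monoidal functor) to $\widehat F\circ i$, which is a strong monoidal functor $\C\to U(\D)$ because $i$ is strong monoidal by \cref{thm:1} and $\widehat F$, being strict, is in particular strong; and it sends a $1$-cell $\widehat\alpha\colon\widehat F\Longrightarrow\widehat G$ to the whiskering $\widehat\alpha\ast\id_i$. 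Functoriality of $\varphi_{\C,\D}$ is immediate from the interchange law for whiskering.

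Next I would argue that $\varphi_{\C,\D}$ is an isomorphism of categories. Bijectivity on $0$-cells is precisely \cref{thm:55}: every strong monoidal $F\colon\C\to U(\D)$ admits a unique strict monoidal $\widehat F\colon\Cstr\to\D$ with $\widehat F\circ i=F$, so $F\mapsto\widehat F$ is a two-sided inverse on objects. Bijectivity on $1$-cells is precisely \cref{prop:str_nat_trans}: every monoidal natural transformation $\alpha\colon F\Longrightarrow G$ lifts uniquely to $\widehat\alpha\colon\widehat F\Longrightarrow\widehat G$ with $\widehat\alpha\ast\id_i=\alpha$. A functor that is bijective on objects and on morphisms is an isomorphism of categories, so $\varphi_{\C,\D}$ is an isomorphism in $\mathbf{Cat}$, with inverse $F\mapsto\widehat F$, $\alpha\mapsto\widehat\alpha$ (this assignment is a functor either by the uniqueness clauses in \cref{thm:55} and \cref{prop:str_nat_trans}, or automatically from $\varphi_{\C,\D}$ being a bijective functor).

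Finally I would verify the two $\mathbf{Cat}$-naturality squares from the definition of enriched adjunction. Chasing a strict monoidal functor $\widehat F\colon\Cstr\to\D$ through them, the square in the $\D$-variable (for a strict monoidal $Q\colon\D\to\D'$) collapses to the tautology $U(Q)\circ(\widehat F\circ i)=(Q\circ\widehat F)\circ i$, since $U$ acts as the identity on underlying functors and natural transformations. The square in the $\C$-variable (for a strong monoidal $P\colon\C'\to\C$) reduces to the identity $\mathrm{str}(P)\circ i_{\C'}=i_\C\circ P$ of strong monoidal functors $\C'\to\Cstr$, together with its $2$-cell analogue $\mathrm{str}(\pi)\ast\id_{i_{\C'}}=\id_{i_\C}\ast\pi$ for monoidal natural transformations $\pi$. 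These are a direct unwinding of the formulas defining $\mathrm{str}$ on $1$-cells \eqref{eq:def66} and on $2$-cells, using that the comparison maps $\beta_{(X)}$ attached to length-one sequences are identities, so that $Par(\mathrm{str}(P)(i_{\C'}f))=P(f)=Par(i_\C(Pf))$; granting this, both squares follow by associativity of composition and the interchange law.

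The step I expect to be the real work is this last one: checking that the embeddings $i_\C$ assemble into a $\mathbf{Cat}$-natural transformation $\id_{\mathsf{MonCat}}\Rightarrow U\circ\mathrm{str}$ — equivalently, that $i$ serves as the unit of the adjunction — and that $\varphi$ is natural with respect to it. Everything else is a formal consequence of the two universal properties already established, so no genuinely new computation is needed; the burden is purely bookkeeping with whiskerings and the explicit description of $\mathrm{str}$ on cells.
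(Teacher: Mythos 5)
Your proposal is correct and follows essentially the same route as the paper: the hom-isomorphism $\varphi_{\C,\D}$ is obtained from the universal properties (\cref{thm:55} for $0$-cells and \cref{prop:str_nat_trans} for $1$-cells), and the $\mathbf{Cat}$-naturality checks reduce, exactly as you say, to the tautology in the $\D$-variable and to the identity $\mathrm{str}(P)\circ i_{\C'}=i_{\C}\circ P$ (with its $2$-cell analogue $\varepsilon^{\mathrm{str}} i_{\C'}=i_{\C}\varepsilon$) in the $\C$-variable, which is how the paper argues as well. No gap.
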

\begin{proof}
For every monoidal category $\C$ and every strict monoidal category $\D$, \cref{thm:55} and \cref{prop:str_nat_trans} together give an isomorphism of categories $$\varphi_{\C, \D} : \mathsf{strMonCat}(\Cstr, \D) \toiso \mathsf{MonCat}(\C,U \D),$$
so it is only left to check that this isomorphism is natural on $\C$ and $\D$.

Let $H: \C_2 \to \C_1$ be a strong monoidal functor between monoidal categories, and let $F:  \C_1  \to \D$ be a strong monoidal functor.  Now consider the following diagram of categories and functors:
$$
\begin{tikzcd}
\Cstr_2 \rar{H^{\mathrm{str}}} & \Cstr_1 \arrow{dr}{\widehat{F}} & \\
\C_2 \uar[hook]{i_2} \rar{H} & \C_1 \uar[hook]{i_1} \rar{F} & \D 
\end{tikzcd}
$$
The leftmost square commutes by the definition of $H^{\mathrm{str}}$, so the whole diagram commutes. This means that $F\circ H = \widehat{F} \circ H^{\mathrm{str}} \circ i_2$, which is precisely the naturality on $ \C$ for functors. The naturality for natural transformations follows by a  similar argument from the following:  if $H,H': \C_1 \to \D$ are strong monoidal functors and $\varepsilon: H \Longrightarrow H'$ is a monoidal natural transformation, then we have the equality of natural transformations $$\varepsilon^{\mathrm{str}} i_2 = i_1 \varepsilon ,$$ which readily follows from the definition of $\varepsilon ^{\mathrm{str}}$.

The naturality on $\D$ is a tautology since $U$ simply forgets the strict structure.
\end{proof}

\subsection{Non-strictification as a 2-functor}

Now let us present a similar construction for the non-strictification, and showing that it similarly gives rise to a 2-functor
$$q: \mathsf{MonCat} \to \mathsf{nonstrMonCat}$$
which will be shown to be left-adjoint to the canonical forgetful functor $$U: \mathsf{nonstrMonCat} \to \mathsf{MonCat}.$$

For a monoidal category $\C$, set  $q(\C) := \C_q$. If $F: \C \to \D$ is a strong monoidal functor between monoidal categories, we define $q(F)=F_q$   as the unique strict monoidal functor $F_q: \C_q \to \D_q$ making the diagram 
$$  
\begin{tikzcd}
\C_q \arrow[dashed]{rrd}{F_q} & & \\
\C \arrow[hook]{u}{i} \rar{F} & \D \arrow[hook]{r}{j} & \D_q
\end{tikzcd}
$$
commutative according to \cref{thm:unique_nonstr}. More explicitly, on objects we have $$ F_q((X_1, \ldots , X_n),t) = ((FX_1,  \ldots , FX_n),t)  $$ and $F_q(\emptyset, \emptyset) = (\emptyset, \emptyset)$. For an arrow $f: (S,t) \to (S',t')$ in $\C_q$, we let $F_q(f)$ be determined by following dashed arrow:
\begin{equation}\label{eq:def661}
\begin{tikzcd}[column sep= 2cm]
Par(F_q(S,t)) \rar[dashed]{Par(F_q(f))} \dar[swap]{\beta_{(S,t)}} & Par(F_q(S',t')) \dar{\beta_{(S',t')}}  \\
F(Par(S,t)) \rar{F(Par f)} & F(Par(S',t'))
\end{tikzcd}
\end{equation}
where $\beta$ is this time as defined in the proof of \cref{thm:unique_nonstr}. 
Let us now describe the non-strictification for 2-cells. Let $F,G: \C \to \D$ be strong monoidal functors between monoidal categories and let $\alpha: F \Longrightarrow G$ be a monoidal natural transformation. Define $q(\alpha)= \alpha_q$ as the unique monoidal natural transformation $\alpha_q: F_q \Longrightarrow G_q$  making the diagram
$$\begin{tikzcd}[row sep=1.6cm, column sep=1.6cm]
\C_q \arrow[swap,bend right=10,""{name=DD}]{rd}{G_q}  \arrow[bend left=25,""{name=UU}]{rd}{F_q} & \arrow[Rightarrow, from=UU,to=DD,shorten=2pt,"\alpha_q"] \\
\C \arrow[hook]{u}{j} \rar[bend right = 20, swap, ""{name=D}]{jG} \rar[bend left = 20, ""{name=U}]{jF} & \D_q \arrow[Rightarrow, from=U,to=D,shorten=2pt,"j\alpha"]
\end{tikzcd}
$$
commute according to \cref{prop:nonstr_nat_trans}. It is readily seen that $(\alpha_q)_{(S,t)}: F_q(S,t) \to G_q(S,t) $ is determined objectwise by the condition that
$$ Par((\alpha_q)_{(S,t)}):= \alpha_{X_1} \boxtimes \cdots \boxtimes \alpha_{X_n} : Par(F_q(S,t)) \to Par(G_q(S,t)),   $$ where $S=(X_1, \ldots , X_n)$ and $\alpha_{X_1} \boxtimes \cdots \boxtimes \alpha_{X_n}$ is supposed to be parenthesised according to $t \in \mathrm{Mag}(\bullet)$. It is easy to see that $(\alpha_2 \circ \alpha_1)_q = (\alpha_2)_q \circ (\alpha_1)_q$. The upshot of this discussion is that we obtain a functor $$q_{\C,\D} :  \mathsf{MonCat} (\C, \D) \to \mathsf{nonstrMonCat} (\C_q, \D_q). $$

\begin{proposition}\label{prop:877}
Non-strictification defines a 2-functor
$$q: \mathsf{MonCat} \to \mathsf{nonstrMonCat}.$$
\end{proposition}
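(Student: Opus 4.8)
The plan is to mirror exactly the structure of the proof of \cref{prop:222}, since by construction everything about the non-strictification was set up in parallel with the strictification. As in that proof, the data of the 2-functor $q$ on 0-cells, 1-cells and 2-cells has already been defined above; what remains to check is that $q$ is $\mathbf{Cat}$-enriched, i.e.\ the commutativity of the diagrams \eqref{eq:anotherdiagram1} and \eqref{eq:anotherdiagram2} of \cref{subsec:enrichedcats}. Concretely, this reduces to verifying four equalities: $(F_2 \circ F_1)_q = (F_2)_q \circ (F_1)_q$ on objects and on arrows; $(\alpha_2 * \alpha_1)_q = (\alpha_2)_q * (\alpha_1)_q$ for horizontal composition; $(\id_\C)_q = \id_{\C_q}$; and $(\iota d_{\id_\C})_q = \iota d_{\id_{\C_q}}$.

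First I would dispatch the two trivial identities: $(\id_\C)_q$ acts as $((X_1,\ldots,X_n),t) \mapsto ((X_1,\ldots,X_n),t)$ and sends $f$ to itself through \eqref{eq:def661} with $\beta_{(S,t)} = \id$, so it equals $\id_{\C_q}$, and the statement for the identity 2-cell is immediate from the defining formula for $\alpha_q$. Next, for $(F_2 \circ F_1)_q = (F_2)_q \circ (F_1)_q$, the object-level equality is clear from the componentwise definition $F_q((X_1,\ldots,X_n),t) = ((FX_1,\ldots,FX_n),t)$; for arrows I would draw the same two-tier rectangle as in \cref{prop:222}, stacking the image under $F_2$ of the square \eqref{eq:def661} for $F_1$ on top of the square \eqref{eq:def661} for $(F_1)_q$ and $(F_1)_q(f)$, and invoke the identity $F_2\,\beta^1_{(S,t)} \circ \beta^2_{(F_1)_q(S,t)} = \beta^{F_2 F_1}_{(S,t)}$, which follows from how the monoidal coherence data of a composite of monoidal functors is defined, together with an induction on $|t|$ using the inductive definition of $\beta$ in \cref{thm:unique_nonstr}.

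For the horizontal composition identity $(\alpha_2 * \alpha_1)_q = (\alpha_2)_q * (\alpha_1)_q$, I would run the same componentwise computation as in \cref{prop:222}: with $S=(X_1,\ldots,X_n)$ and $t \in \mathrm{Mag}(\bullet)$, expand $(\alpha_2 * \alpha_1)_{X_i} = G_2(\alpha_{1,X_i}) \circ \alpha_{2,F_1 X_i}$, apply functoriality of $\boxtimes$ (bracketing everything according to $t$) to split the $\boxtimes$-product of these composites into a composite of $\boxtimes$-products, and recognise the two resulting factors as $(G_2)_q((\alpha_1)_q)_{(S,t)}$ and $((\alpha_2)_q)_{(F_1)_q(S,t)}$ respectively — i.e.\ precisely the components of the horizontal composite $(\alpha_2)_q * (\alpha_1)_q$. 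The naturality of $\alpha_q$ itself, needed only to know $\alpha_q$ is a legitimate 2-cell, was already asserted above via the cube argument.

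The main obstacle, such as it is, is bookkeeping: unlike the strictification case where parenthesisation is fixed left-associated, here every $\boxtimes$-product must be read according to the magma element $t$, so each appeal to functoriality of $\boxtimes$ or to the inductive clause (3) in the definition of $Par$ must respect that bracketing. This is purely formal but is the one place where care is required; everything else is a transcription of the proof of \cref{prop:222} with $(S,t)$ in place of $S$ and $\bullet$ in place of the length-one sequence. I would therefore state only that the argument is identical to that of \cref{prop:222} \emph{mutatis mutandis}, spelling out in detail only the composite-coherence identity $F_2\,\beta^1_{(S,t)} \circ \beta^2_{(F_1)_q(S,t)} = \beta^{F_2 F_1}_{(S,t)}$ and the horizontal-composition computation.
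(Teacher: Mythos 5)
Your proposal is correct and matches the paper, whose entire proof of this proposition is the single sentence that the argument is entirely analogous to that of \cref{prop:222}; you carry out precisely that analogy, with the right key identity $F_2\,\beta^1_{(S,t)} \circ \beta^2_{(F_1)_q(S,t)} = \beta^{F_2F_1}_{(S,t)}$ and the correct caveat about bracketing according to $t$. No issues.
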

\begin{proof}
The argument is entirely analogous to the one given in \cref{prop:222}.
\end{proof}

\begin{theorem}\label{thm:7}
There is a 2-adjunction
$$\begin{tikzcd}[column sep={4em,between origins}]
\mathsf{MonCat}
  \arrow[rr, bend left, swap, "{q}"' pos=0.50]
& \bot & \phantom{aaaa} \mathsf{nonstrMonCat}
  \arrow[ll, bend left, swap, "U"' pos=0.50]
\end{tikzcd}$$
\end{theorem}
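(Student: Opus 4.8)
The plan is to follow the proof of \cref{thm:2adj_str} essentially verbatim, replacing each piece of strictification data by its non-strict analogue. First, for a monoidal category $\C$ and a non-strict monoidal category $\D$, one combines \cref{thm:unique_nonstr} with \cref{prop:nonstr_nat_trans}: the former provides a bijection between strict monoidal functors $\C_q \to \D$ and strong monoidal functors $\C \to \D$ via precomposition with the embedding $j : \C \hooklongrightarrow \C_q$, and the latter provides the matching bijection at the level of monoidal natural transformations. Because the assignment $\alpha \mapsto \widehat\alpha$ of \cref{prop:nonstr_nat_trans} is objectwise given by $\widehat\alpha_{(S,t)} = \alpha_{X_1} \boxtimes \cdots \boxtimes \alpha_{X_n}$, it preserves vertical composites and identities, so the two bijections assemble into an isomorphism of categories
\[
\varphi_{\C,\D} : \mathsf{nonstrMonCat}(\C_q, \D) \toiso \mathsf{MonCat}(\C, U\D)
\]
sending $\widehat F$ to $\widehat F \circ j$ and $\widehat\alpha$ to $\widehat\alpha j$. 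It then remains to check that $\varphi$ is $\mathbf{Cat}$-natural in both variables.

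Second, for the first variable: given a strong monoidal functor $H : \C_2 \to \C_1$, the square with top edge $H_q : (\C_2)_q \to (\C_1)_q$, bottom edge $H$, and vertical edges the embeddings $j_2, j_1$ commutes --- on length-one objects this is the equality $H_q((X),\bullet) = ((HX),\bullet)$, and on arrows it follows from \eqref{eq:def661} using $\beta_{((X),\bullet)} = \id$. Hence for any strong monoidal $F : \C_1 \to \D$ one gets $\widehat F \circ H_q \circ j_2 = \widehat F \circ j_1 \circ H = F \circ H$, which is exactly the $\mathbf{Cat}$-naturality square for $1$-cells; and for a monoidal natural transformation $\varepsilon : H \Longrightarrow H'$ the identity $\varepsilon_q j_2 = j_1 \varepsilon$, immediate from the definition of $\varepsilon_q$, gives the square for $2$-cells. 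Compatibility with horizontal composition, needed to know these squares live in the correct hom-categories, is exactly the content of \cref{prop:877}. Naturality in the second variable is a tautology, since $U$ fixes objects and natural transformations and merely relabels a strict monoidal functor as a strong one, precisely as in \cref{thm:2adj_str}.

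The main point requiring attention --- more bookkeeping than genuine difficulty --- is to match the two commuting diagrams in the definition of a $\mathbf{Cat}$-enriched adjunction in \cref{subsec:enrichedcats} with the two naturality statements above, and to confirm that $\varphi_{\C,\D}$ is a functor in each slot rather than a mere bijection on objects. Since every ingredient used here has a strict counterpart already established in \cref{thm:2adj_str}, no genuinely new computation is needed, and the proof is entirely analogous.
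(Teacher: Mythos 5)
Your proof is correct and follows the same route as the paper: the paper likewise combines \cref{thm:unique_nonstr} and \cref{prop:nonstr_nat_trans} into the isomorphism of hom-categories $\varphi_{\C,\D}$ and then observes that the $\mathbf{Cat}$-naturality argument is identical to that of \cref{thm:2adj_str}, which is exactly the verification you spell out via the square $H_q \circ j_2 = j_1 \circ H$ and its $2$-cell analogue $\varepsilon_q j_2 = j_1\varepsilon$. Your write-up is in fact slightly more explicit than the paper's, which simply defers to the strict case.
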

\begin{proof}
If $\C$ is a monoidal category and $\D$ is a non-strict monoidal category, then \cref{thm:unique_nonstr} and \cref{prop:nonstr_nat_trans} define an isomorphism of categories
$$\varphi_{\C, \D} : \mathsf{nonstrMonCat}(\C_q, \D) \toiso \mathsf{MonCat}(\C,U \D).$$ The proof that this isomorphism is natural on $\C$ and $\D$ is identical to that given in \cref{thm:2adj_str}.
\end{proof}

\subsection{Relation between the strictifcation and non-strictification}

We have seen that any monoidal category is monoidally equivalent to both a strict and a non-strict monoidal category. This means that, up to monoidal equivalence, there is no distinction between strict and non-strict categories. We would like to make this idea more precise.

Let $\mathsf{C}$ be a 2-category. The \textit{core truncation} of $\mathsf{C}$ is the ordinary category $h \mathsf{C}$ whose objects are those of $\mathsf{C}$ and whose arrows are 2-isomorphism classes of 1-cells \cite{banks}. More precisely, given a (small) ordinary category $\mathcal{C}$, let  $[\mathcal{C}]$ be the discrete category whose objects are isomorphism classes of objects of $\mathcal{C}$. Trivially this defines a functor $[ - ]: \mathbf{Cat} \to \mathbf{Cat}$. Then the set of arrows in $h \mathsf{C}$ is defined as $$\hom{h \mathsf{C}}{A}{B} := \hom{\mathbf{Cat}}{*}{[\mathsf{C}(A,B)]},$$ where we recall that $*$ denotes the one-object discrete category. The identities and the composite law are inherited from those of $\mathsf{C}$ in the obvious way. Note that the core truncation satisfies the property that if $f \in \mathsf{C}(A,B)$ is an equivalence (cf. \cref{subsec:2-categories}), then the arrow $A \to B$ corresponding to the 2-isomorphism class of $f$ is an isomorphism. 

If $F: \mathsf{C} \to \mathsf{C}'$ is a 2-functor between 2-categories, then define a functor $$ hF : h \mathsf{C} \to h\mathsf{C}'$$ as  $(hF)(A) := F(A)$ on objects and 
$$
\begin{tikzcd}
 \hom{\mathbf{Cat}}{\bm{1}}{[\mathsf{C}(A,B)]} \rar{[F_{A,B}]_*} & \hom{\mathbf{Cat}}{\bm{1}}{[\mathsf{C}'(FA,FB)]} 
\end{tikzcd}
$$
on arrows. This defines the \textit{core truncation functor}
$$ h:  2 {\text -}\mathbf{Cat} \to \mathbf{Cat} .$$
 
\begin{remark}
For the model/homotopical category theory minded reader, we would like to place the core truncation construction into a more general context. Suppose that $\C$ is a closed symmetric monoidal homotopical category \cite{riehl2014}, and $\V$ is a $\C$-enriched category. If $\mathrm{Ho}(\C)$ denotes the homotopy category of $\C$, then $\V$ gives rise to a $\mathrm{Ho}(\C)$-enriched category $\bar{h} \V$, where the hom-objects are given by the images of the hom-objects $\V(A,B)$ under the localisation functor $\C \to \mathrm{Ho}(\C)$. These hom-objects $\bar{h} \V(A,B)$ are thought of as ``homotopy classes''. We can furthermore take the underlying ordinary category of $\bar{h} \V$, that we denote $h \V := (\bar{h} \V)_0$. Note that we have
$$\hom{h \V}{A}{B} = \hom{\mathrm{Ho}(\C)}{\bm{1}}{\bar{h} \V(A,B)} = \hom{\mathrm{Ho}(\C)}{\bm{1}}{\V(A,B)}.$$
Having said that, take $\C= \mathbf{Cat}$, the category of small categories with its canonical symmetric monoidal model (hence homotopical) structure, where the weak equivalences are given by the equivalences of categories. The hom-sets in $\mathrm{Ho}(\mathbf{Cat})$ are given by natural isomorphism classes of functors. If $\mathsf{C}$ is a 2-category (i.e. a $\mathsf{Cat}$-enriched category), then
$$\hom{\mathrm{Ho}(\mathbf{Cat})}{*}{\mathsf{C}(A,B)} = \hom{\mathbf{Cat}}{*}{[\mathsf{C}(A,B)]},$$
because the datum of a natural isomorphism between functors $i_f, i_g : * \to  \mathsf{C}(A,B)$ is equivalent to the datum of a 2-isomorphism $\alpha: f \to g$. This makes our notation $h \mathsf{C}$ consistent, and realises the core truncation as an instance of a construction in homotopical category theory.
\end{remark}

\begin{lemma}
The core truncation functor $ h:  2 {\text -}\mathbf{Cat} \to \mathbf{Cat} $  preserves adjunctions, that is, if 
$$\begin{tikzcd}[column sep={4em,between origins}]
\V
  \arrow[rr, bend left, swap, "{F}"' pos=0.50]
& \bot &  \W
  \arrow[ll, bend left, swap, "G"' pos=0.50]
\end{tikzcd}$$
is a 2-adjunction, then 
$$\begin{tikzcd}[column sep={4em,between origins}]
h\V
  \arrow[rr, bend left, swap, "{hF}"' pos=0.50]
& \bot & h \W
  \arrow[ll, bend left, swap, "hG"' pos=0.50]
\end{tikzcd}$$
is an ordinary adjunction.
\end{lemma}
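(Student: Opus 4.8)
The plan is to build the hom-set bijection witnessing $hF\dashv hG$ directly out of the $\mathbf{Cat}$-enriched natural isomorphism $\varphi$ underlying the given $2$-adjunction. For a category $\mathcal C$ write $\pi_0(\mathcal C):=\mathrm{ob}([\mathcal C])$ for its set of isomorphism classes of objects, so that the defining formula for the core truncation reads $\hom{h\V}{A}{B}=\pi_0(\V(A,B))$, and on objects $hF(A)=F(A)$, $hG(B)=G(B)$. Two elementary facts will be used throughout: the functor $[\,-\,]\colon\mathbf{Cat}\to\mathbf{Cat}$, hence also $\pi_0=\mathrm{ob}\circ[\,-\,]\colon\mathbf{Cat}\to\mathbf{Set}$, preserves finite products and the terminal category; and, by the very definition of $h$ on $1$- and $2$-cells, the action of $hF$ and $hG$ on morphisms and the composition laws of $h\V$ and $h\W$ are precisely the images under $\pi_0$ of the enriched maps $F_{-,-}$, $G_{-,-}$ and the enriched composition morphisms of $\V$ and $\W$.

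First I would apply $\pi_0$ to each component $\varphi_{V,W}\colon\W(FV,W)\toiso\V(V,GW)$. Being an isomorphism of categories, $\varphi_{V,W}$ yields a bijection
\[
\Phi_{V,W}\colon\hom{h\W}{hF(V)}{W}=\pi_0\bigl(\W(FV,W)\bigr)\xrightarrow{\ \cong\ }\pi_0\bigl(\V(V,GW)\bigr)=\hom{h\V}{V}{hG(W)},
\]
which will be the hom-set bijection of the adjunction $hF\dashv hG$.

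It remains to check that $\Phi$ is natural in both variables. For this I would apply the product-preserving functor $\pi_0$ to the two $\mathbf{Cat}$-naturality squares in the definition of an enriched adjunction. Since $\pi_0$ preserves products, each commuting square of functors becomes a commuting square of functions between the relevant hom-sets of $h\V$ and $h\W$; using the identifications recalled in the first paragraph, the first square unwinds to the equation $\Phi_{V',W}\bigl(\bar\psi\circ hF(\bar f)\bigr)=\Phi_{V,W}(\bar\psi)\circ\bar f$ for every $\bar f\colon V'\to V$ in $h\V$, i.e.\ naturality of $\Phi$ in the contravariant variable, and the second square unwinds to naturality in the covariant variable. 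Separate naturality in each variable implies joint naturality, since any morphism $(\bar f,\bar g)$ of $(h\V)^{\mathrm{op}}\times h\W$ factors as $(\bar f,\mathrm{id})\circ(\mathrm{id},\bar g)$; hence $\Phi$ is a natural isomorphism $\hom{h\W}{hF(-)}{-}\cong\hom{h\V}{-}{hG(-)}$ of functors $(h\V)^{\mathrm{op}}\times h\W\to\mathbf{Set}$, which is exactly an adjunction $hF\dashv hG$.

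I do not expect a genuine obstacle. The one step deserving attention is the bookkeeping in the previous paragraph: after applying $\pi_0$ one must correctly match each enriched composition or whiskering morphism appearing in the $\mathbf{Cat}$-naturality squares for $\varphi$ with the corresponding operation ($hF$, $hG$, or composition) in $h\V$ or $h\W$, after which the two identities fall out formally. A more conceptual route, needing essentially no computation, is to observe that $h$ factors as change of base along the (product-preserving, hence monoidal) functor $[\,-\,]\colon\mathbf{Cat}\to\mathbf{Cat}$ followed by the underlying-ordinary-category functor $(-)_0$, and then to invoke the standard fact that change of base along a lax monoidal functor carries enriched adjunctions to enriched adjunctions; applying this as the enrichment base varies through $\mathbf{Cat}$ and then $\mathbf{Set}$ gives the lemma at once.
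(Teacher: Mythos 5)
Your argument is correct and is essentially the paper's own proof: the paper likewise obtains the hom-set bijections and their naturality by applying the functor $\hom{\mathbf{Cat}}{\bm{1}}{[-]}$ (your $\pi_0$) to the components $\varphi_{V,W}$ and to the $\mathbf{Cat}$-naturality diagrams. You merely spell out the details the paper leaves implicit (product preservation of $\pi_0$, separate versus joint naturality), and your closing remark about change of base along a lax monoidal functor is a valid, more conceptual packaging of the same idea.
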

\begin{proof}
This follows directly applying the functor $$\hom{\mathbf{Cat} }{*}{[-]}: \mathbf{Cat}  \to \mathbf{Set}$$ to  each of the isomorphisms of categories  $$\varphi_{V,W}: \W(FV, W) \toiso \V(V, GW)$$ of the 2-adjunction and to each of the commutative diagrams expressing the naturality.
\end{proof}

\begin{theorem}
The core truncations of the (non-)strictification 2-functors define a pair of adjoint equivalences (i.e., adjunctions and equivalences of categories)
$$\begin{tikzcd}[column sep={4em,between origins}]
h\mathsf{nonstrMonCat}    \arrow[rr, bend right, "hU"' pos=0.50]
& \bot &  h\mathsf{MonCat} \arrow[ll, bend right,  "{hq}"' pos=0.50]
  \arrow[rr, bend left, swap, "{h \mathrm{str}}"' pos=0.50] \arrow[rr, bend left, swap, ""' pos=0.50] 
& \bot &  h\mathsf{strMonCat}
  \arrow[ll, bend left, swap, "hU"' pos=0.50]
\end{tikzcd}$$
\end{theorem}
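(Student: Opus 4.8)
The plan is to deduce the theorem from the two $2$-adjunctions $\mathrm{str}\dashv U$ of \cref{thm:2adj_str} and $q\dashv U$ of \cref{thm:7}, together with the preceding lemma. Applying that lemma to these $2$-adjunctions yields ordinary adjunctions $h\mathrm{str}\dashv hU$ and $hq\dashv hU$, which already accounts for the two adjunctions in the diagram. It then remains to check that each of these adjunctions is an \emph{adjoint} equivalence. By a standard fact about adjunctions it suffices to show, in each case, that the unit is a natural isomorphism and that the left adjoint is essentially surjective; the right adjoint is then automatically a quasi-inverse, and both unit and counit are invertible.

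I would first identify the units explicitly. The unit of $\mathrm{str}\dashv U$ at a monoidal category $\C$ is the image of $\id_{\Cstr}$ under the isomorphism $\varphi_{\C,\Cstr}$ of \cref{thm:2adj_str}, and since that isomorphism sends a strict monoidal functor to its composite with $i$, the unit is exactly the canonical embedding $i_\C\colon\C\hooklongrightarrow\Cstr$; likewise the unit of $q\dashv U$ at $\C$ is $j_\C\colon\C\hooklongrightarrow\C_q$. Hence the unit of $h\mathrm{str}\dashv hU$ at $\C$ is the $2$-isomorphism class $[i_\C]$, and that of $hq\dashv hU$ at $\C$ is $[j_\C]$. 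Now $i_\C$ is a monoidal equivalence by \cref{thm:1}, and in fact an equivalence \emph{in the $2$-category} $\mathsf{MonCat}$: the parenthesisation $Par\colon\Cstr\to\C$ is strong monoidal (with coherence constraints $\theta$ and $\id_{\bm{1}}$), and \cref{lemma:1} provides $Par\circ i_\C=\id_\C$ together with a monoidal natural isomorphism $\delta\colon\id_{\Cstr}\Rightarrow i_\C\circ Par$, so $Par$ is a quasi-inverse of $i_\C$ inside $\mathsf{MonCat}$. By the property of the core truncation recorded just before the theorem, $[i_\C]$ is therefore an isomorphism in $h\mathsf{MonCat}$, and so the unit of $h\mathrm{str}\dashv hU$ is a natural isomorphism. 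The identical argument, now using \cref{lemma:007} and \cref{thm:369}, shows the unit of $hq\dashv hU$ is a natural isomorphism too. In particular $h\mathrm{str}$ and $hq$ are fully faithful.

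The substantive point is the counit, equivalently the essential surjectivity of $h\mathrm{str}$ and of $hq$. For $h\mathrm{str}$ this asks that every strict monoidal category $\D$ be isomorphic in $h\mathsf{strMonCat}$ to $\Cstr$ for some monoidal $\C$; the natural choice $\C=\D$ reduces it to showing $\D^{\mathrm{str}}\cong\D$ in $h\mathsf{strMonCat}$. The counit of $\mathrm{str}\dashv U$ at $\D$ is the unique strict monoidal functor $\D^{\mathrm{str}}\to\D$ extending $\id_\D$ along $i_\D$, which is nothing but the parenthesisation $Par_\D$; it is certainly a strict monoidal functor and an equivalence of underlying categories. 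What still has to be done is to exhibit $Par_\D$ as an equivalence \emph{in the $2$-category} $\mathsf{strMonCat}$, that is, to produce a \emph{strict} monoidal functor $\D\to\D^{\mathrm{str}}$ that is quasi-inverse to $Par_\D$ with monoidal natural isomorphisms as unit and counit — the obvious candidate $i_\D$ is only strong monoidal, so a genuine argument is needed here (presumably exploiting that $\D$ is already strict in order to replace $i_\D$, up to monoidal natural isomorphism, by a strict monoidal functor). I expect this to be the main obstacle. The non-strict case is strictly parallel: one must show that $Par_\D\colon\D_q\to\D$ is an equivalence in $\mathsf{nonstrMonCat}$ for every non-strict monoidal category $\D$.

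Granting these two essential-surjectivity statements, $h\mathrm{str}$ and $hq$ are fully faithful and essentially surjective, hence equivalences of categories with the right adjoints $hU$ as quasi-inverses, so both $h\mathrm{str}\dashv hU$ and $hq\dashv hU$ are adjoint equivalences — this is the asserted pair of adjoint equivalences. Composing them then gives $h\mathsf{nonstrMonCat}\simeq h\mathsf{MonCat}\simeq h\mathsf{strMonCat}$, making precise the informal statement that, up to monoidal equivalence, strict and non-strict monoidal categories are indistinguishable.
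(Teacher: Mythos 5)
Your route is exactly the paper's: the two adjunctions come from the preceding lemma; full faithfulness of $h\mathrm{str}$ and $hq$ comes from identifying the units with the classes $[i_\C]$ and $[j_\C]$ and invoking \cref{thm:1} and \cref{thm:369} (and you are right that this silently requires $i_\C$, $j_\C$ to be equivalences \emph{internal} to $\mathsf{MonCat}$, i.e.\ that $Par$ is strong monoidal and $\delta$ is a \emph{monoidal} natural isomorphism --- \cref{lemma:1} only records $\delta$ as a natural isomorphism, so a small check is owed here, but it is routine); and essential surjectivity is reduced to the invertibility of $[Par_\D]$. The only divergence is the step you explicitly leave open, so let me address that directly.

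The obstacle you flag is genuine, and the paper's own proof does not overcome it: it asserts that $Par\colon\D^{\mathrm{str}}\to\D$ being a strict monoidal functor and an equivalence of underlying categories makes $\D^{\mathrm{str}}$ and $\D$ isomorphic in $h\mathsf{strMonCat}$, but an isomorphism there requires a quasi-inverse that is itself a \emph{strict} monoidal functor together with \emph{monoidal} natural isomorphisms, which is strictly more. In fact the step can fail. Let $\D$ be the discrete category on the two-element monoid $\{1,a\}$ with $a^2=a$, regarded as a strict monoidal category. Any strict monoidal functor $g\colon\D\to\Cstr$ (for any monoidal $\C$) satisfies $g(a)=g(a)*g(a)$ in the free monoid of objects of $\Cstr$, whose only idempotent is $\emptyset$; hence $g(a)=\emptyset=g(1)$, and for any strict monoidal $f\colon\Cstr\to\D$ one gets $fg(a)=\bm{1}_\D=1$, so $fg\cong\id_\D$ would force $a\cong 1$ in $\D$, which is false. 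Thus no $\Cstr$ is isomorphic to this $\D$ in $h\mathsf{strMonCat}$, and $h\mathrm{str}$ is not essentially surjective with the definitions as given. So your suspicion that ``a genuine argument is needed here'' is correct, and the gap cannot be closed without changing something --- e.g.\ taking the $1$-cells of $\mathsf{strMonCat}$ (and $\mathsf{nonstrMonCat}$) to be strong rather than strict monoidal functors, in which case $i_\D$ (resp.\ $j_\D$) serves as the required quasi-inverse of $Par_\D$ and the rest of your argument goes through.
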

\begin{proof}
That both pairs are adjunctions follows from the previous lemma. To see that they are also equivalences of categories, recall that given an adjunction $ F \dashv G$, we have that $F$ is fully faithful if and only if the unit of the adjunction is a natural isomorphism (e.g. \cite[4.5.13]{riehl2017}). The unit $\eta$ of the adjunction  $h \mathrm{str} \dashv hU$ is objectwise precisely the 2-isomorphism class of the functor $i: \C \to \Cstr$, hence $\eta$ must be a natural isomorphism in light of \cref{thm:1}, so that $h \mathrm{str}$ is fully faithful.  On the other hand, if $\D$ is a strict monoidal category, then we have that $Par: \D^{ \mathrm{str}} \to \D$ is a strict equivalence of categories, which means that $\D$ and $ \D^{ \mathrm{str}} =h \mathrm{str}(\D)$ are isomorphic in $ h\mathsf{strMonCat}$, and therefore $ h  \mathrm{str} $ is essentially surjective. For the non-strictification, the argument is identical.
\end{proof}

\section{Reconstruction via 2-monads}\label{sec:5}

In the previous sections we have given concrete models for the (non-)strictification of monoidal categories based on (parenthesised) sequences of objects. In this section, we want to show that these two constructions, as well as the corresponding monoidal equivalences with the original monoidal category, can be regarded as instances of Power's coherence result for 2-monads, when applied to the free monoid (resp. magma) 2-monad.

\subsection{Monads and their algebras} We start by briefly recalling some basics about monads and its algebras in ordinary categories. We refer the reader to \cite{maclane,riehl2017} for a detailed exposition.

If  $\C$ is a category, a \textit{monad} is a monoid object in the monoidal category $\mathsf{Cat}(\C , \C)$ of endofunctors of $\C$. More precisely, a monad is a triple $(T, \mu, \eta)$ where $T: \C \to \C$ is an endofunctor and $\mu: T^2 \Longrightarrow T$ and $\eta: \id_\C \Longrightarrow T$ are natural transformations satisfying the following associativity and unitality conditions:  $$  \mu \circ T\mu = \mu \circ \mu T  \qquad, \qquad \mu \circ T\eta = \id_T = \mu \circ \eta T. $$
An \textit{algebra} over $(T, \mu, \eta)$, or simply a \textit{$T$-algebra}, is a pair $(A , a)$ formed by an object $A$ in $\C$ and an arrow $a: T(A) \to A$ satisfying 
\begin{equation}\label{eq:T-algebra}
a \circ Ta   =a \circ \mu_A   \qquad , \qquad \id_A=  a\circ \eta_A.
\end{equation}
A \textit{$T$-algebra homomorphism} $f: (A,a) \to (B,b)$ between $T$-algebras is a map $f: A \to B$ in $\C$ compatible with the structure maps $a,b$ in the sense that $$Tf \circ b= f \circ a.$$ $T$-algebras and $T$-algebra homomorphisms form a category $T\text{-}\mathbf{Alg}$, called the category of $T$-algebras or the \textit{Eilenberg-Moore category} of $T$.

Monads are very closely related to adjunctions. If
$$\begin{tikzcd}[column sep={4em,between origins}]
\C
  \arrow[rr, bend left, swap, "{F}"' pos=0.50]
& \bot &  \D
  \arrow[ll, bend left, swap, "U"' pos=0.50]
\end{tikzcd}$$
is an adjunction with unit $\eta: \id_\C \Longrightarrow UF $ and counit $\eta: FU \Longrightarrow \id_\D$, then it induces a monad $(T, \mu, \eta)$ given by $$ T:= UF \quad , \quad \mu:= U \varepsilon F : UFUF =T^2 \Longrightarrow T= UF \quad , \quad \eta: \id_\C  \Longrightarrow T=UF . $$
Conversely, any monad $(T, \mu, \eta)$ gives rise to an adjunction between $\C$ and the category of $T$-algebras, 
\begin{equation}\label{eq:FT-UT}
\begin{tikzcd}[column sep={4em,between origins}]
\C
  \arrow[rr, bend left, swap, "{F^T}"' pos=0.59]
& \bot &  T\text{-}\mathbf{Alg}
  \arrow[ll, bend left, swap, "U^T"' pos=0.40].
\end{tikzcd}
\end{equation}
Here, for an object $X$ in $\C$, $F^T(X)$ is the \textit{free $T$-algebra} $(TX, \mu_X)$, whereas for a $T$-algebra $(A,a)$, we have $U^T(A,a):=A$ so $U^T$ is simply the forgetful functor.

Every monad $(T,\mu, \eta)$ arises from an adjunction, namely from the free $T$-algebra adjunction  \eqref{eq:FT-UT} above. However, not every adjunction comes from a monad, only \textit{monadic adjunctions} do, that is, those adjunctions $F\dashv U$ such that the functor $K:\D \to T\text{-}\mathbf{Alg}$, $K(Y):= (UY,U\varepsilon_Y )$ is an equivalence of categories.

\begin{example}\label{ex:monoid_monad}
Let $T: \mathbf{Set} \to \mathbf{Set}$ be the \textit{free monoid monad}, $T(X):= \mathrm{Mon}(X)$ the underlying set of the free  monoid on $X$. As sets, $$T(X)= \coprod_{n\geq 0 } X^n.$$ The multiplication $\mu_X : \mathrm{Mon}(\mathrm{Mon}(X)) \to \mathrm{Mon}(X)$ is the map that realises a word in the alphabet $\mathrm{Mon}(X)$ as a word in the alphabet $X$, and the unit $\eta_X: X \to \mathrm{Mon}(X)$ assigns to every element of $X$ the single-letter word. Obviously, this monad is induced by the free monoid adjunction.

If $(X, a: \mathrm{Mon}(X) \to X)$ is a $T$-algebra, let us write $a_n: X^n \to X$ for the restriction to $X^n$ map. Consider an element $$(x_{11} \cdots x_{1k_1})\cdots (x_{n1} \cdots x_{nk_n})\in  \mathrm{Mon}(\mathrm{Mon}(X)).  $$   The left-hand side equation of \eqref{eq:T-algebra} translates into
\begin{equation}\label{eq:strict_associativity}
a_k(a_{k_1}(x_{11} \cdots x_{1k_1}) \cdots a_{k_n}(x_{n1} \cdots x_{nk_n}))=a_n(x_{11} \cdots  x_{nk_n}) 
\end{equation}
with $k= \sum_i k_i$ and $x_{ij}\in X$. In particular, given elements $x_1, x_2, x_3 \in  X$, the previous equation says (using it twice with $(x_1 x_2)x_3$ and $x_1(x_2x_3)$) that
\begin{equation}\label{eq:associativity_monoid_monad}
a_2(a_2(x_1x_2)x_3)= a_3(x_1x_2x_3)=a_2(x_1a_2(x_2x_3)) , 
\end{equation}
which says that $a_2: X \times X \to X$  is associative and, by an inductive argument, that $a_n$ for $n>2$ can be expressed in terms of $a_2$. On the other hand, it is readily seen that the right-hand side of \eqref{eq:T-algebra} translates into $a_1=\id$. Now, let $1:= a_0(\emptyset)$.  The left-hand side of \eqref{eq:T-algebra} also implies (using it twice) that
\begin{equation}\label{eq:unitality_monoid_monad}
a_2(1 x)=x = a_2(x1).
\end{equation}
The upshot is that $a$  amounts to the map $a_2: X \times X \to X$ together with a unit $1 \in X$  with the property that $a_2$ is associative and unital, hence the data of a $T$-algebra $(X,a)$ is equivalent to the data of a monoid structure on $X$. It is easy to see that a $T$-algebra map  amounts to a giving a monoid map. Therefore, the category $T\text{-}\mathbf{Alg}$ is equivalent to the category of monoids.
\end{example}

\begin{example}\label{ex:free_magma_mon}
Let $T: \mathbf{Set} \to \mathbf{Set}$ be the \textit{free magma monad}, $T(X):= \mathrm{Mag}(X)$ the underlying set of the free (unital) magma on $X$. As sets, $$T(X)= \coprod_{n\geq 0 } X_n \quad \mathrm{where} \quad X_0:= \{ \emptyset \}, \ X_1:= X, \ X_{n} := \coprod_{p+q=n} X_p \times X_q, \ (n>1).$$ Similarly to the monoid case, the multiplication $\mu_X : \mathrm{Mag}(\mathrm{Mag}(X)) \to \mathrm{Mag}(X)$ is the map that realises a parenthesised word in the alphabet $\mathrm{Mag}(X)$ as a parenthesised word in the alphabet $X$, and the unit is exactly as in the monoid case. Obviously, this monad is induced by the free magma adjunction.

An argument similar to the one of the monoid case allows to conclude that the category $T\text{-}\mathbf{Alg}$ is equivalent to the category of magmas.

Note that $T(X)= \mathrm{Mag}(X)$ is naturally bijective with the set
\begin{equation}\label{eq:free_magma_monoid_cool}
T(X)= \coprod_{n \geq 0} X^{\times n} \times \mathrm{Mag}_n(\bullet),
\end{equation}
where $\mathrm{Mag}_n(\bullet) := |-|^{-1} (n)$ (cf. \cref{subseq:non-strict}).
\end{example}

\begin{remark}
The reader should note that, in both previous examples, the category $\mathbf{Set}$ could be replaced for an arbitrary monoidal category $\C$ with finite coproducts in which the monoidal product preserves them in each variable. In such cases the Eilenberg-Moore categories are equivalent to the categories of monoids and magma objects in $\C$, respectively.
\end{remark}

\subsection{2-monads and their algebras}

We now want to focus on the 2-categorical counterparts of monads, which will give us more flexibility. This is the relevant definition: if $\CC$ is a 2-category,  a \textit{2-monad} in $\CC$ is  a triple $(T, \mu, \eta)$ where $T:\CC \to \CC$ is a endo-2-functor and $\mu: T^2 \Longrightarrow T$ and $\eta: \id_\CC \Longrightarrow T$ are 2-natural transformations satisfying $$  \mu \circ T\mu = \mu \circ \mu T  \qquad, \qquad \mu \circ T\eta = \id_T = \mu \circ \eta T, $$ just as in the monad case.

For the algebras, we can take advantage of the higher categorical context and allow more flexibility, by requiring not equalities  as in  \eqref{eq:T-algebra} but simply that those 1-cells are related by some coherent 2-cells. More precisely, a \textit{pseudo $T$-algebra} is a quadruple $(A , a, \theta, \nu)$ formed by an object $A$ in $\CC$, a 1-cell $a: T(A) \to A$ and a couple of invertible 2-cells 
\begin{equation}\label{eq:axioms_2monad_alg}
\theta: a \circ Ta  \toiso a \circ \mu_A   \qquad , \qquad \nu: \id_A \toiso a\circ \eta_A ,
\end{equation}
which satisfy the following coherence conditions expressing the (higher) associativity and unitality of $\theta$ and $\nu$,
\begin{equation}\label{eq:axioms_2monad_alg_1}
\begin{tikzcd}
a \circ \id_{TA} \arrow[equal]{d} \rar{\id_a * T \nu_A} & a \circ Ta \circ T \eta_A \dar{\theta * \id_{T\eta_A}} \\
a \arrow[equal]{r} & a \circ \mu_A \circ T\eta
\end{tikzcd} \quad , \quad
\begin{tikzcd}
\id_a \circ a \rar{\nu * \id_a} \arrow[equal]{d} & a \circ \eta_A \circ a  \arrow[equal]{r} & a \circ Ta \circ \eta_{TA} \dar{\theta * \id_{\eta_{TA}}} \\
a \arrow[equal]{rr} & & a \circ \mu_A \circ \eta_{TA}
\end{tikzcd}
\end{equation}
\begin{equation}\label{eq:axioms_2monad_alg_2}
\begin{tikzcd}
a \circ Ta \circ T^2a \dar{\id_a * T\theta} \rar{\theta * \id_{T^2a}} & a \circ \mu_A \circ T^2a \arrow[equal]{r} & a \circ Ta \circ \mu_{TA} \dar{\theta * \id_{\mu_{TA}}}\\
a \circ Ta \circ T\mu_A \rar{\theta * \id_{T\mu_A}} & a \circ \mu_A \circ T \mu_A \rar[equal] & a \circ \mu_A \circ \mu_{TA}
\end{tikzcd}
\end{equation}
 If $\theta$ and $\nu$ are the identity 2-cells, then we say that $(A , a)$ is a \textit{strict $T$-algebra}.

Given pseudo-$T$-algebras $(A, a, \theta, \nu)$ and  $(B, b, \theta', \nu')$, a \textit{pseudomorphism} is a pair $(f, \delta)$ where $f: A \to B$ is a 1-cell and $$\delta: b \circ Tf \toiso f \circ a   $$ is an invertible 2-cell which satisfies certain coherence axioms which express compatibility with the rest of structure 2-cells of $A$ and $A'$, see  \cite[(2.4)--(2.5)]{power}. If $\delta$ is the identity 2-cell we say that $f$ is a \textit{strict morphism}. We will denote by $\mathbf{pseudo}\textrm{-}T\textrm{-}\mathbf{Alg}$ the category of pseudo-$T$-algebras and pseudomorphisms, and by   $\mathbf{str}\textrm{-}T\textrm{-}\mathbf{Alg}$ the category of strict $T$-algebras and strict morphisms

\subsection{Strictification via 2-monads}\label{subsec:strict_via_2monads}
Let $\mathsf{Cat}$ be the 2-category of categories, functors and natural transformations. Inspired by \cref{ex:monoid_monad}, we define the \textit{free monoid 2-monad}  $T:\mathsf{Cat} \to \mathsf{Cat}$ as the following 2-functor: on 0-cells, $$T(\C) := \coprod_{n \geq 0} \C^{\times n},$$ where $\C^{\times 0}=*$ is the one-object discrete category. More explicitly, $T(\C)$ is the category whose objects are finite sequences $(X_1, \ldots , X_n)$ of objects of $\C$, and whose arrows are  $$\hom{T(\C)}{(X_1, \ldots , X_n)}{(Y_1, \ldots , Y_n)} = \prod_{i=1}^n \hom{\C}{X_i}{Y_i} $$ and no arrows between sequences of different length, with composition given componentwise. A functor $F: \C \to \D$ is sent to $T(F):= \coprod_{n \geq 0} F^{\times n}$ and a natural transformation $\alpha: F \Longrightarrow G$ to $T(\alpha):= \coprod_{n \geq 0} \alpha^{\times n}$. Similarly to \cref{ex:monoid_monad}, $\mu_\C: T^2(\C) \to T(\C)$ is given by regarding a sequence of sequences of objects of $\C$ as a sequence of objects of $\C$ (removing inner parenthesis), and $\eta_\C: \C \to T(\C)$ is the canonical inclusion functor.

Let us now suppose we are given a strict $T$-algebra $(\C, a:T(\C)\to \C)$. As in \cref{ex:monoid_monad}, $a$ is the disjoint union of individual functors $a_n: \C^{\times n} \to \C$. In this case \eqref{eq:axioms_2monad_alg} (which are  equalities in the strict case) implies, by the same argument used in \eqref{eq:associativity_monoid_monad} and \eqref{eq:unitality_monoid_monad}, that $\C$ is a strict monoidal category with monoidal product $\otimes := a_2$ and unit $\bm{1} := a_0(*)$.

\begin{remark}\label{rem:unbiased}
In fact, the data of a strict $T$-algebra $(\C, a:T(\C)\to \C)$ is precisely the same  data as what we call an \textit{unbiased strict monoidal category}, since the standard definition from \cref{subsec:mon_cats} is understood to be biased towards arities 2 and 0.
\end{remark}

Let us now consider a pseudo-$T$-algebra $(\C, a, \theta, \nu)$. This amounts to a family of functors $a_n: \C^{\times n} \to \C$ together with natural isomorphisms (compare with \eqref{eq:strict_associativity} and \eqref{eq:associativity_monoid_monad})
$$ \theta_{S}: a_n(a_{k_1}(X_{11}, \ldots , X_{1k_1}), \ldots , a_{k_n}(X_{n1} ,\ldots , X_{nk_n})) \toiso   a_k(X_{11}, \ldots , X_{nk_n})  $$ with $k=\sum k_i$ and $S=((X_{11} ,\ldots , X_{1k_1}), \ldots , (X_{n1} , \ldots , X_{nk_n}))$ and $$ \nu_X : X \toiso a_1(X)  $$  for objects $X, X_{ij}$ in $\C$. In this case the axioms \eqref{eq:axioms_2monad_alg_1} and \eqref{eq:axioms_2monad_alg_2} express that a pseudo-$T$-algebra $(\C, a, \theta, \nu)$ is exactly the same thing as what it is known in the literature as an \textit{unbiased monoidal category} \cite{leinster_hohc}. As we pointed out in \cref{rem:unbiased}, it is unbiased in the sense that there is no preference to the arities 2 and 0, but instead we have a coherent family of functors $a_n: \C^{\times n} \to \C$. Pseudomorphisms and strict morphisms are in this context called \textit{strong} and \textit{strict monoidal functors} (of unbiased monoidal categories), respectively.

There is a canonical functor
\begin{equation}\label{eq:UMoncat_MonCat}
\mathbf{UMonCat}  \to \mathbf{MonCat}
\end{equation}
from the category $\mathbf{pseudo}\textrm{-}T\textrm{-}\mathbf{Alg}=\mathbf{UMonCat}$ of unbiased monoidal categories and strong monoidal functors to the category $\mathbf{MonCat}$ of monoidal categories and strong monoidal functors that assigns, to any unbiased monoidal category $(\C, a, \theta, \nu)$, the following monoidal structure on $\C$: the monoidal product is given by $a_2: \C \times \C \to \C$. Given objects $X,Y,Z$ in $\C$, the associativity constraint is given by the composite  (compare with \eqref{eq:associativity_monoid_monad})
$$
\begin{tikzcd}[row sep= 0.5em, column sep=4em]
a_2(a_2(X,Y),Z)  \rar{a_2(\id, \nu_Z)}[swap]{\cong}  &  a_2(a_2(X,Y),a_1(Z)) \\
\phantom{a_2(a_2(X,Y),Z) } \rar{\theta_{((X,Y),Z)}}[swap]{\cong } & a_3(X,Y,Z) \phantom{----}\\
\phantom{a_2(a_2(X,Y),Z) } \rar{\theta^{-1}_{(X,(Y,Z))}}[swap]{\cong } &  a_2(a_1(X),a_2(Y,Z))\\
\phantom{a_2(a_2(X,Y),Z) } \rar{a_2(\nu_X^{-1},\id)}[swap]{\cong } &  a_2(X,a_2(Y,Z)) \phantom{--}
\end{tikzcd}
$$
and the left and right unit constraints by
\begin{equation}\label{eq:left_right_constr_pseudo}
\begin{aligned}
&\begin{tikzcd}[row sep= 0.5em, column sep=3em]
a_2(\bm{1},X)  \rar{a_2(\id ,\nu_X)}[swap]{\cong}  &  a_2(a_0(*),a_1(X))  \rar{\theta_{((X),*)}}[swap]{\cong } & a_1(X)  \rar{\nu^{-1}_X}[swap]{\cong } &  X
\end{tikzcd}\\
&\begin{tikzcd}[row sep= 0.5em, column sep=3em]
a_2(X,\bm{1})  \rar{a_2(\nu_X,\id )}[swap]{\cong}  &  a_2(a_1(X),a_0(*))  \rar{\theta_{(*,(X))}}[swap]{\cong } & a_1(X)  \rar{\nu^{-1}_X}[swap]{\cong } &  X
\end{tikzcd}
\end{aligned}
\end{equation}
where $\bm{1} := a_0 (*)$. A strong monoidal functor $(F, \delta): (\C,a, \theta, \nu) \to (\D, b, \theta', \nu')$ between unbiased monoidal categories is sent to the same underlying ordinary functor $F: \C \to \D$ with monoidal constraint $\gamma$ being the restriction of $\delta$, 
\begin{equation}\label{eq:constaint_delta_gamma}
\gamma_{X,Y}:= \delta_{(X,Y)} : b_2(FX,FY)= (b_2 \circ TF )(X,Y) \toiso F(a_2(X,Y))
\end{equation}
and $u:= \delta_* : b_0(*) \toiso F(a_0(*))$.

\begin{theorem}[e.g. \cite{leinster_hohc}]\label{thm:equiv_UMonCat_MonCat}
The functor \eqref{eq:UMoncat_MonCat} is an equivalence of categories, $$ \mathbf{UMonCat}  \simeq \mathbf{MonCat} .$$
\end{theorem}

The functor \eqref{eq:UMoncat_MonCat} does not have a canonical quasi-inverse, yet this will our preferred choice: if $\C$ is a monoidal category, then given a sequence  $S= (X_1, \ldots , X_n) $ of objects, $n\geq 0$, define $a_n: \C^{\times n} \to \C$ as
\begin{equation}\label{eq:a_n=Par}
a_n(S) := Par(S)
\end{equation}
where $Par$ is as in \eqref{eq:Par}. The constraint $\theta$ is defined inductively using the associativity constraint as in \eqref{eq:2} (in fact that is a particular case), and $\nu$ is the identity natural transformation. The upshot is, therefore, that a pseudo-$T$-algebra is essentially the same as a monoidal category.

\begin{remark}\label{rem:iso:UstrMonCat_strMonCat}
Observe that \cref{rem:unbiased} and the discussion  above translate into an equivalence of categories 
$$ \mathbf{str}\textrm{-}T\textrm{-}\mathbf{Alg} =  \mathbf{UstrMonCat}  \simeq \mathbf{strMonCat} .$$
\end{remark}

We now move on to discuss a ``strictification'' result for algebras over 2-monads. The theorem below usually goes under the name of Power's general coherence and continues work of Kelly \cite{kelly1,kelly2}. First we need a

\begin{lemma}[\cite{power}]\label{lem:power}
Consider the following diagram of categories and functors
$$
\begin{tikzcd}
\mathcal{A} \rar{F} \dar{J} & \mathcal{B} \dar{G} \\
\C \rar{K} & \D
\end{tikzcd}
$$ and suppose $$ \alpha: G \circ F \overset{\cong}{\Longrightarrow} K \circ J  $$ is a natural isomorphism. If $F$ is bijective-on-objects, and $K$ is fully faithful, then there exist a  unique functor $H: \mathcal{B} \to \C$ and a unique natural isomorphism $\beta: G \overset{\cong}{\Longrightarrow} K\circ H$ such that $H \circ F =J$ and $\alpha = \beta F$.
\end{lemma}
\begin{proof}[About the proof]
The functor $H$ is necessarily defined by $H(X):= J(F^{-1}(X))$ on objects and on arrows as the composite
$$
\begin{tikzcd}[row sep= 0.5em, column sep=6em]
\hom{\mathcal{B}}{X}{Y}  \rar{G} &  \hom{\mathcal{D}}{GX}{GY} = \hom{\mathcal{D}}{GFF^{-1}X}{GFF^{-1}Y} \\
\phantom{\hom{\mathcal{B}}{X}{Y}  } \rar{\alpha_{F^{-1}Y} \circ (-) \circ \alpha^{-1}_{F^{-1}X}} & \hom{\mathcal{D}}{KJF^{-1}X}{KJF^{-1}Y} \phantom{--------}\\
\phantom{\hom{\mathcal{B}}{X}{Y}} \rar{K^{-1}}[swap]{\cong } &  \hom{\mathcal{C}}{JF^{-1}X}{JF^{-1}Y} =   \hom{\mathcal{C}}{HX}{HY}
\end{tikzcd}
$$
\end{proof}

Before stating the theorem, recall that any functor $F: \C \to \D$ canonically factors as a composite $$   \C \overset{F'}{\to} \mathcal{E} \overset{F''}{\to} \D \qquad , \qquad F=F'' \circ F'$$ where $F'$ is bijective-on-objects and $F''$ is fully faithful. The category $\mathcal{E}$ can be realised, up to isomorphism, as having the same objects as $\C$ and arrows $$ \hom{\mathcal{E}}{X}{Y} := \hom{\D}{FX}{FY}  $$ with composite and unit inherited from $\D$.

\begin{theorem}[\cite{power}]\label{thm:power}
Let $T: \mathsf{Cat}' \to \mathsf{Cat}'$ be a 2-monad on the 2-category $\mathsf{Cat}' $ of categories, functors and natural isomorphisms, and suppose that $T$ preserves bijective-on-object functors, that is, if $F:\C \to \D$ is bijective on objects, so is $T(F): T(\C) \to T(\C)$.

Then every pseudo-$T$-algebra is equivalent, via a pseudomorphism, to a strict $T$-algebra.
\end{theorem}
\begin{proof}[About the proof]
Given  a pseudo-$T$-algebra  $(\C, a, \theta, \nu)$ with canonical factorisation of $a$  $$T(\C) \overset{a'}{\to}  \mathcal{E} \overset{a''}{\to } \C \qquad , \qquad a=a'' \circ a',$$ we first apply \cref{lem:power} to the square
$$
\begin{tikzcd}
T^2\mathcal{C} \rar{Ta'} \dar[swap]{a' \circ \mu_\C} & T\mathcal{E} \dar{a \circ Ta''} \\
\mathcal{E} \rar{a''} & \C
\end{tikzcd}
$$ 
with $\alpha= \theta$, which produces a functor $H: T\mathcal{E} \to \mathcal{E}$ and a natural isomorphism $ \beta : a \circ Ta'' \Longrightarrow a'' \circ H  $ such that $H \circ Ta'= a' \circ \mu_\C$ and $\theta = \beta Ta'$. The rest of the proof consists of verifying that $(\mathcal{E},H)$ is a strict $T$-algebra and $(a'', \beta) : \mathcal{E}  \to \C$ is a pseudomorphism. Then we conclude since $a''$ is an equivalence of categories: it is fully faithful by construction and besides essentially surjective since for all objects $X$ in $\C$, we have $\nu_X:X \toiso a'' (a'(\eta_\C (X)))$.
\end{proof}

Let us apply Power's general coherence theorem to the free monoid 2-monad $T :\mathsf{Cat}' \to \mathsf{Cat}'$ from the beginning of the subsection.    First of all observe that $T$ preserves bijective-on-objects functors: if $F: \C \to \D$ is bijective-on-objects, then so is $T(F)$: indeed   $T(F)^{-1}(Y_1, \ldots, Y_n)= (F^{-1}Y_1, \ldots, F^{-1}Y_n)$. Therefore, we can  use the theorem for this 2-monad.

Consider a monoidal category $\C$, that we view as a  pseudo-$T$-algebra $(\C, a, \theta, \nu)$ via the equivalence of \cref{thm:equiv_UMonCat_MonCat}. According to Power's \cref{thm:power}, $\C$ is equivalent to a strict $T$-algebra $\mathcal{E}$, that we can view as a strict monoidal category according to \cref{rem:iso:UstrMonCat_strMonCat}. Let us describe precisely this strict $T$-algebra $\mathcal{E}$.

As an ordinary category, $\mathcal{E}$ has the same objects as $T(\C)$, so finite sequences  $S=(X_1, \ldots , X_n)$ of objects of $\C$, $n\geq 0$. The morphisms of $\mathcal{E}$ are given by
$$
\hom{\mathcal{E}}{S}{S'} = \hom{\mathcal{C}}{a(S)}{a(S')}= \hom{\mathcal{C}}{Par(S)}{Par(S')}.
$$
where we have used \eqref{eq:a_n=Par} in the second equality. That is, as an ordinary category, $\mathcal{E}$ is exactly the category $\Cstr$ that we constructed in \cref{subsec:strictification}. Let us see now the monoidal structure. This is derived from the structure functor $H: T\mathcal{E} \to \mathcal{E}$ that arises from \cref{lem:power}. Given two objects $S,S'$ in $\mathcal{E}$, we have that $$ H_2(S,S') = (a' \circ \mu_\C) (S,S') = S*S' $$ the concatenation of the sequences.  On arrows, $H_2$ is given by the composite (cf. \cref{lem:power})

\begin{align*}
\hom{T\mathcal{E}}{(S_1,S_1')}{(S_2,S_2')} &\to \hom{\C}{a(a(S_1),a(S_1'))}{a(a(S_2),a(S_2))}\\
&\to \hom{\C}{a(S_1*S_1')}{a(S_2*S_2')}\\
&\to \hom{\C}{S_1*S_1'}{S_2*S_2'}
\end{align*}
where the first arrow is $a \circ Ta''$, the second arrow is $\theta_{(S_2,S_2')} \circ (-) \circ \theta^{-1}_{(S_1,S_1')}$ and the third arrow is simply $(a'')^{-1}$. This means that 
$$ H_2((f,g))= (a'')^{-1}(\theta_{(S_2,S_2')} \circ a(f,g) \circ \theta^{-1}_{(S_1,S_1')}).  $$ Taking into account that we view $\C$ as a monoidal category via \eqref{eq:a_n=Par}, comparing with  \eqref{eq:3} we conclude that $\mathcal{E} $ coincides with $\Cstr$.

Lastly, note that $a_1: \mathcal{E} \to \C$ is essentially $Par$ also as monoidal functors. Indeed the  constraint $\delta$  in this case is given by the natural isomorphism $$\beta : a \circ Ta_1 \overset{\cong}{\Longrightarrow} a_1 \circ H $$ from \eqref{thm:power} which satisfies $\theta = \beta Ta_0$, viewed as the  monoidal constaint for an ordinary monoidal category as explained in  \eqref{eq:constaint_delta_gamma}. Since $Ta_0$ is essentially the identity on objects, the conclusion is that $\beta$ is given by $\theta$, as required.


All in all, we have proved the following

\begin{theorem}\label{thm:recovering_equivalence_Cstr_C}
Let $\C$ be a pseudo-$T$-algebra, and let $\mathcal{E}$ be the strict $T$-algebra obtained from \cref{thm:power}. Then viewing $\mathcal{E}$ as a strict monoidal category via the isomorphism from \cref{rem:iso:UstrMonCat_strMonCat}, $\mathcal{E}$ is exactly the strictification $\Cstr$ of $\C$ viewed as a monoidal category via \cref{thm:equiv_UMonCat_MonCat}.

Furthermore, the pseudofunctor $a'': \mathcal{E} \to \C$ is precisely, up to these identifications, the monoidal equivalence $Par: \Cstr \to \C$ from \cref{subsec:strictification}.
\end{theorem}

\subsection{Non-strictification via 2-monads}
Now, inspired by the previous subsection, \cref{ex:free_magma_mon} and particularly \eqref{eq:free_magma_monoid_cool},  define the \textit{free magma 2-monad}  $T:\mathsf{Cat}' \to \mathsf{Cat}'$ as the  2-functor $$T(\C) := \coprod_{n \geq 0} \C^{\times n} \times \mathrm{Mag}_n(\bullet),$$ where $\mathrm{Mag}_n(\bullet)$ is viewed as an  indiscrete category (in particular a groupoid). More explicitly, $T(\C)$ is the category whose objects are pairs $(S,t)$ where $S$ is a finite sequence $(X_1, \ldots , X_n)$ of objects of $\C$ and $t \in \mathrm{Mag}_n(\bullet)$, and whose arrows are tuples of arrows of $\C$ as in \cref{subsec:strict_via_2monads}.  The structure morphism $\mu_\C: T^2(\C) \to T(\C)$ is given as follows: $$ \mu_\C(((S_1,t_1), \ldots, (S_n,t_n)),t):= (S_1 * \cdots *S_n, t_{t_1, \ldots, t_n})  $$
where $t_{t_1, \ldots, t_n} \in \mathrm{Mag}(\bullet)$ stands for the element that arises from inserting $t_i$ in the $i$-th bullet of $t$. Of course $\eta_\C: \C \to T(\C)$ is the canonical inclusion functor $\eta_\C(X)=((X), \bullet)$.

Let us now consider a pseudo-$T$-algebra $(\C, a, \theta, \nu)$. This amounts to a family of functors $a_n: \C^{\times n} \times \mathrm{Mag}_n(\bullet) \to \C$ together with a family of isomorphisms
$$ \theta_P: a_n((a_{k_1}(S_1,t_1), \ldots , a_{k_n}(S_n,t_n)),t) \toiso   a_k(S_1* \cdots * S_n, t_{t_1, \ldots, t_n})  $$ with $P=(((S_1,t_1), \ldots, (S_n,t_n)),t)$,  $k=\sum k_i$ and $$ \nu_X: X \toiso a_1((X), \bullet).  $$

There is a canonical functor (compare with \eqref{eq:UMoncat_MonCat})
\begin{equation}\label{eq:pseudo_moncat}
\mathbf{pseudo}\textrm{-}T\textrm{-}\mathbf{Alg}  \to \mathbf{MonCat}
\end{equation}
that assigns, to every pseudo-$T$-algebra $(\C,a, \theta, \nu)$, the underlying category $\C$ pro\-vid\-ed with the following monoidal structure: the monoidal product is given by the bifunctor $$ a_2: \C^{\times 2} \times \mathrm{Mag}_2(\bullet) \cong \C \times \C \to \C  $$ ($\mathrm{Mag}_2(\bullet)$ is the discrete one-object category). The associativity constraint is induced by $\theta$ and the unique isomorphism $p: (\bullet \bullet )\bullet \to \bullet (\bullet \bullet )$ in $\mathrm{Mag}_3(\bullet)$, more precisely this is
$$
\begin{tikzcd}[row sep= 0.5em, column sep=5em]
a_2((a_2((X,Y),\bullet \bullet),Z),\bullet \bullet)  \rar{a_2(\id, \nu_Z)}[swap]{\cong}  &  a_2((a_2((X,Y),\bullet \bullet),a_1((Z),\bullet )),\bullet \bullet) \\
\phantom{a_2((a_2((X,Y),\bullet \bullet),Z),\bullet \bullet)} \rar{\theta_{((X,Y),Z)}}[swap]{\cong } & a_3((X,Y,Z),(\bullet \bullet)\bullet) \phantom{-------}\\
\phantom{a_2((a_2((X,Y),\bullet \bullet),Z),\bullet \bullet) } \rar{a_3(\id_{(X,Y,Z)} \times p)}[swap]{\cong } & a_3((X,Y,Z),  \bullet(\bullet \bullet))\phantom{-------} \\
\phantom{a_2((a_2((X,Y),\bullet \bullet),Z),\bullet \bullet) } \rar{\theta^{-1}_{(X,(Y,Z))}}[swap]{\cong } &  a_2((a_1((X), \bullet) ,a_2((Y,Z),\bullet \bullet)),\bullet \bullet)\\
\phantom{a_2((a_2((X,Y),\bullet \bullet),Z),\bullet \bullet) } \rar{a_2(\nu_X^{-1},\id)}[swap]{\cong } &  a_2((X,a_2((Y,Z),\bullet \bullet)), \bullet \bullet) \phantom{----}
\end{tikzcd}
$$
(bullets have been removed from the subindices for clarity), and the left and right unit constraints are defined as in \eqref{eq:left_right_constr_pseudo} with the obvious changes.

The following theorem is essentially well-known:
\begin{theorem}\label{thm:equiv_pseudoTalg_magma}
The functor \eqref{eq:pseudo_moncat} is an equivalence of categories,
$$ \mathbf{pseudo}\textrm{-}T\textrm{-}\mathbf{Alg}  \simeq \mathbf{MonCat} .$$
This equivalence restricts to an equivalence between $\mathbf{str}\textrm{-}T\textrm{-}\mathbf{Alg} $ and the subcategory of strictly unital monoidal categores.
\end{theorem}

In contrast with \cref{thm:equiv_UMonCat_MonCat}, the functor from the previous theorem has a canonical quasi-inverse, namely if $\C$ is a monoidal category, then a quasi-inverse of \eqref{eq:pseudo_moncat} is given by assigning to $\C$ the pseudo-$T$-algebra with structure morphism $a$ given by
\begin{equation}
a(S,t) := Par(S,t)
\end{equation}
where $Par$ is as defined in \cref{subseq:non-strict}. It is easy to see that for the resulting pseudo-$T$-algebra, we have that $\theta= \id$.

\begin{remark}
It is in fact possible to construct a 2-monad $T$ whose category of strict $T$-algebras is equivalent to $\mathbf{MonCat}$, but it is much harder to describe explicitly since it arises as a colimit, see \cite[\S 5.5]{lack}
\end{remark}


Let us apply now Power's coherence \cref{thm:power} to the free magma 2-monad. For the same reason as in the \cref{subsec:strict_via_2monads}, $T$ preserves bijective-on-objects functors. Consider $\C$ a monoidal category, that we view as a pseudo-$T$-algebra via the equivalence of \cref{thm:equiv_pseudoTalg_magma}. Let us describe the resulting strict $T$-algebra $\mathcal{E}$.

As an ordinary category, $\mathcal{E}$ has the same objects as $T(\C)$, so pairs $(S,t)$, and morphisms
$$
\hom{\mathcal{E}}{S}{S'} = \hom{\mathcal{C}}{a(S)}{a(S')}= \hom{\mathcal{C}}{Par(S)}{Par(S')}.
$$
So, as an ordinary category, $\mathcal{E}$ coincides with the category  $\C_q$ constructed in \cref{subseq:non-strict}.

Using the same argument as in \cref{subsec:strict_via_2monads} mutatis mutandis, we obtain

\begin{theorem}
Let $\C$ be a monoidal category,  viewed as a pseudo-$T$-algebra via \cref{thm:equiv_pseudoTalg_magma}.  If $\mathcal{E}$ is the strict $T$-algebra obtained from \cref{thm:power}, then viewed as a monoidal category, $\mathcal{E}$ is exacty the non-strictification $\C_q$ of $\C$.

Furthermore, the pseudofunctor $a'': \mathcal{E} \to \C$ is precisely, up to these identifications, the monoidal equivalence $Par: \C_q \to \C$ from \cref{subseq:non-strict}.
\end{theorem}

\section{Strictification via bicategories}\label{sec:6}

Given a monoidal category $\C$, we explicitly constructed in \cref{sec:1} a strict monoidal category $\Cstr$ monoidally equivalent to $\C$. In this  section, we will first provide an alternative, explicit model for the strictification of a monoidal category, via the so-called category of right-module endofunctors of $\C$. Later, we will explain how this model can be regarded as a particular instance of a more general ``strictification'' result for bicategories (also known as weak 2-categories).


\subsection{Category of right-module endofunctors}\label{subsec:right-module_endofunctors}

The first construction we want to introduce, due to Joyal and Street \cite{JS}, can be viewed as a categorification of the following easy observation: if $M$ is a monoid, let us say that a set $X$ is a \textit{right $M$-module} if it is endowed with a right action of $M$. Given two right $M$-modules $X,Y$, a right $M$-module map $f: X \to Y$ is a set-theoretical map that preserves the right action of $M$, $f(x\cdot m)=f(x)\cdot m$. Let us write $\operatorname{End}_M(X)$ for the set of right $M$-module endomorphisms of $X$, which is itself a monoid with product $f \cdot g := f\circ g$ and with unit the identity map. If $X=M$ is viewed as a right $M$-module endowed with the right multiplication, then the canonical map
\begin{equation}\label{eq:monoid_iso}
M \to \operatorname{End}_M(M) \qquad , \qquad m \mapsto  m \cdot -
\end{equation}
is a monoid isomorphism, because for any right $M$-module map $f$, the equality $$ f(m)= f(1 \cdot m)= f(1) \cdot m  $$ expresses that $f$ is uniquely determined by the image of the unit element.

In the categorical setting, we will consider endofunctors $F: \C \to \C$ of a monoidal category $\C$. Requiring $F(X \otimes Y)= F(X) \otimes Y$ would be a too strong condition (as it is customary in category theory), so a more natural requirement is that $F(X \otimes Y)$ and $F(X) \otimes Y$ are related by a (fixed) natural isomorphism. 

Here is the concrete construction:

\begin{construction}\cite{JS}
Let $\C$ be a monoidal category. A \textit{right-module endofunctor} is a pair $(F, c)$ where $F: \C \to \C$ is a functor and $$c: \otimes \circ (F \times \id) \overset{\cong}{\Longrightarrow} F \circ \otimes$$ is a natural isomorphism such that the following diagram is commutative for all objects $X,Y,Z$:
\begin{equation}\label{eq:pentagon_End}
\begin{tikzpicture}[commutative diagrams/every diagram]
\node (P0) at (90:2.3cm) {$(F(X) \otimes Y) \otimes Z$};    
\node (P1) at (90+72:2cm) {$F(X \otimes Y)\otimes Z$} ;
\node (P2) at (90+2*72:2cm) {\makebox[3ex][r]{$F((X \otimes Y) \otimes Z)$}};
\node (P3) at (90+3*72:2cm) {\makebox[3ex][l]{$F(X \otimes (Y\otimes Z))$}};
\node (P4) at (90+4*72:2cm) {$F(X) \otimes (Y \otimes Z)$};
\path[commutative diagrams/.cd, every arrow, every label]
(P0) edge node[swap] {$  c_{X,Y} \otimes \id_Z $} (P1)
(P1) edge node[swap] {$c_{X\otimes Y, Z}$} (P2)
(P2) edge node {$F(a_{X,Y,Z})$} (P3)
(P4) edge node {$c_{X,Y\otimes Z}$} (P3)
(P0) edge node {$a_{FX,Y,Z}$} (P4);
\end{tikzpicture}
\end{equation}

A right-module endofuctor map $(F,c) \to (G,d)$ is a natural transformation $\alpha: F \Longrightarrow G$ with the following compatibility condition for all objects $X,Y$:
\begin{equation}\label{eq:square_End}
\begin{tikzcd}
F(X) \otimes Y \rar{c_{X,Y}} \dar[swap]{\alpha_X \otimes \id_Y} & F(X \otimes Y) \dar{\alpha_{X \otimes Y}} \\
G(X) \otimes Y \rar{d_{X,Y}} & G(X\otimes Y)
\end{tikzcd}
\end{equation}

Right-module endofunctors and right-module endomorphism maps form a category $\operatorname{End}_\C (\C)$. The composition of right-module endomorphism maps is given by vertical composition of natural transformations, and the identity of a right-module endomorphism map is the identity natural transformation. 

The category $\operatorname{End}_\C (\C)$ admits a strict monoidal structure: on objects, 
\begin{equation}\label{eq:mon_str_End1}
(F,c) \bullet (G,d) := (F \circ G, e) 
\end{equation} where $e_{X,Y}$ is defined as the composite
\begin{equation}\label{eq:mon_str_End2}
\begin{tikzcd}
FG(X) \otimes Y \rar{c_{GX,Y}} & F(G(X)\otimes Y) \rar{F(d_{X,Y})} & FG(X \otimes Y).
\end{tikzcd}
\end{equation}
On arrows, the tensor product is simply the horizontal composition of natural transformations, and the pair $(\id_\C, \id_{-\otimes-})$ serves as the unit of the monoidal structure.

We omit the tedious but straightforward details making sure that the above structure of strict monoidal category is well-defined, but the serious reader should check that (1) the composition of arrows in $\operatorname{End}_\C (\C)$ satisfies \eqref{eq:pentagon_End}, (2) the tensor product of objects satisfies  \eqref{eq:square_End}, (3) the tensor product of arrows satisfies \eqref{eq:pentagon_End}, (4) the tensor product is strictly associative on objects (this  is obvious for the endofunctor component, but it is not immediate for the natural isomorphism) and on arrows, (5) the tensor product is strictly unital.
\end{construction}

We are now ready to state the categorical counterpart of \eqref{eq:monoid_iso}: define
\begin{equation}
\Psi: \C \to \operatorname{End}_\C (\C) \quad , \quad \Psi(X):= (X \otimes -, a_{X,-,-}) \quad , \quad \Psi (f)= f\otimes \id_{(-)} .
\end{equation}
That $\Psi(X)$ is indeed a right-module endofunctor follows from the Pentagon axiom (and the naturality of the associativity constraint).
That $\Psi$ is a functor readily follows since any object $X$ in $\C$, $$ \Psi (g \circ f)_X = (g \circ f) \otimes \id_{X} = (g \otimes \id_X) \circ (f \otimes \id_X) = \Psi(g)_X \circ \Psi(f)_X,  $$ and besides $\Psi (\id_X)= \Psi (\id_{X} \otimes \id_{(-)}) = \id_{\Psi(X)}$. This functor can be upgraded to a strong monoidal functor: define a natural isomorphism $$\gamma_{X,Y}^\Psi= \gamma_{X,Y}: \Psi (X) \bullet \Psi (Y) \toiso \Psi (X\otimes Y),$$ that is, an isomorphism in $\operatorname{End}_\C (\C)$
$$ \gamma_{X,Y}: \Big( X \otimes (Y \otimes -), (\id_X \otimes a_{Y,-,-})\circ (a_{X,Y\otimes -, -}) \Big) \toiso \Big( (X \otimes Y)\otimes - , a_{X\otimes Y, -,-} \Big)  $$ natural in $X$ and $Y$ as
\begin{equation}\label{eq:gamma_End}
\gamma_{X,Y}:= a^{-1}_{X,Y,-}.
\end{equation}
The diagram \eqref{eq:square_End} becomes in this case the Pentagon equation, so $\gamma_{X,Y}$ is indeed an arrow in $\operatorname{End}_\C (\C)$. Lastly define $$  u^\Psi =u: (\id_\C, \id_{- \otimes -}) \to (\bm 1 \otimes - , a_{\bm 1, -,-}) $$ as
\begin{equation}\label{eq:u_End}
u:= \ell^{-1}.
\end{equation}
This is in fact a morphism in $\operatorname{End}_\C (\C)$ because \eqref{eq:square_End} translates into the left-hand side diagram of \eqref{eq:triangle2}. It is readily verified that the Hexagon axiom and \eqref{eq:4} become the Pentagon axiom, the Triangle axiom and \eqref{eq:triangle2}. Thence $\Psi$ is a strong monoidal functor.

\begin{theorem}[\cite{JS,EGNO}]\label{thm:equiv_right_mod_endo}
The functor
$$\Psi: \C \overset{\simeq}{\to} \operatorname{End}_\C (\C)$$ is a monoidal equivalence of categories.
\end{theorem}
\begin{proof}
We start by showing that $\Psi$ is faithful. Given arrows $f,g$ in $\C$, if $\Psi(f)=\Psi(g)$, then in particular $f \otimes \id_{\bm{1}}= g \otimes \id_{\bm{1}}$. This readily implies, by the naturality of the right unit constraint, that $f=g$.

Now we show that $\Psi$ is full. Given a right-module endofunctor map $ \alpha: \Psi (X) \to \Psi(Y)$, that is, a natural isomorphism $\alpha: X \otimes - \to Y \otimes -$ compatible with $a_{X,-,-}$ and $a_{Y,-,-}$ in the sense of \eqref{eq:square_End}, we claim that $\alpha = \Psi (f)$ where $f:=r_Y \circ \alpha_{\bm{1}} \circ r_X^{-1}: X \to Y$. Let us contemplate the following digram in $\C$, for any object $Z$ in $\C$:
$$
\begin{tikzcd}[row sep=scriptsize,column sep=scriptsize]
& X \otimes Z \arrow[from=dl,"r_X \otimes \id_Z"] \arrow[rr,equals] \arrow[dd, "\Psi(f)_Z",pos=0.2] & & X \otimes Z  \arrow[from=dl,"\id_Z \otimes \ell_Z"]\arrow[dd,swap, "\alpha_Z"] \\
(X \otimes \bm{1}) \otimes Z \arrow[rr,"a_{X,\bm{1},Z}",pos=0.3,  crossing over]\arrow[dd,swap,"\alpha_{\bm{1}}\otimes \id_Z"] & & X \otimes (\bm{1} \otimes Z )\\
& Y \otimes Z \arrow[from=dl, "r_Y \otimes \id_Z"]\arrow[rr, equals] &  & Y \otimes Z  \arrow[from=dl,swap, "\id_Y \otimes \ell_Z"] \\
(Y \otimes \bm{1}) \otimes Z \arrow[rr,"a_{Y,\bm{1},Z}"] & & Y \otimes (\bm{1} \otimes Z ) \arrow[from=uu, "\alpha_{\bm{1} \otimes Z}",pos=0.3, crossing over]
\end{tikzcd}
$$
In the above diagram the equals arrow represents the identity. Now, the front face of the cube commutes by \eqref{eq:square_End}, the left face of the cube by the definition of $\Psi(f)_Z= f \otimes \id_Z$, the right face of the cube by the naturality of $\alpha$, and the top and bottom faces by the Triangle axiom. Therefore the whole is commutative and and back face also commutes, that is, $\alpha= \Psi(f)$.

Next let us show that the functor $\Psi$ is  essentially surjective. Given a right-module endomorphism $(F,c)$, we claim that it is isomorphic to $\Psi (F(\bm{1}))$ in $\operatorname{End}_\C (\C)$. Indeed, consider the natural isomorphism of endofunctors of $\C$
\begin{equation}\label{eq:def_alpha_endC}
\alpha: F(\bm{1}) \otimes - \Longrightarrow F \qquad , \qquad \alpha_X := F(\ell_X) \circ c_{ \bm{1},X}. 
\end{equation}
To see that $\alpha$ is compatible with $a_{F \bm{1},-,-}$ and $c$ in the sense of \eqref{eq:square_End}, consider the following diagram:
$$
\begin{tikzcd}
(F(\bm{1}) \otimes X) \otimes Y \arrow{rr}{a_{\bm{1},X,Y}} \dar[swap]{c_{\bm{1},X} \otimes \id_Y} &  & F(\bm{1} \otimes (X \otimes Y) \dar{c_{\bm{1},X \otimes Y}} \\
F(\bm{1} \otimes X)\otimes Y \dar{F(\ell_X)\otimes \id_Y}\rar{c_{\bm{1}\otimes X,Y}} & F((\bm{1} \otimes X)\otimes Y)  \drar[swap]{F(\ell_X \otimes \id_Y)} \rar{F(a_{\bm{1},X,Y})} & F(\bm{1} \otimes (X\otimes Y)) \dar{F(\ell_{X\otimes Y})} \\
F(X) \otimes Y \arrow{rr}{c_{X,Y}} & & F(X\otimes Y)
\end{tikzcd}
$$
The top rectangle is nothing but \eqref{eq:pentagon_End} for $(F,c)$, the bottom left square commutes by the naturality of $c$, and the bottom right triangle by \eqref{eq:triangle2}. Therefore, the whole diagram commutes. By noting that the outer diagram is precisely the required diagram we conclude.

So far we have shown that $\Psi$ is an equivalence of categories. It only remains to demonstrate that $\Psi$ is a strong monoidal functor. 

For objects $X,Y$ in $\C$, first observe that
$$\Psi(X) \bullet \Psi (Y) = (X \otimes (Y \otimes -) , (\id_X \otimes a_{X,-,-})\circ a_{X,Y\otimes-,-})$$
and $$\Psi(X \otimes Y)= ((X \otimes Y)\otimes -, a_{X \otimes Y,-,-}).$$
Then define 
$$\gamma_{X,Y}: \Psi(X) \bullet \Psi (Y) \to \Psi(X \otimes Y)  $$ by $\gamma_{X,Y}:= a^{-1}_{X,Y,-}$, which is trivially a natural isomorphism. It is straightforward to see that the compatibility diagram \eqref{eq:square_End}  for $\gamma_{X,Y}$ amounts precisely to the Pentagon axiom in $\C$. 

Lastly, the arrow $$u: (\id_\C, \id_{- \otimes -}) \to \Psi(\bm{1})= (\bm{1} \otimes - , a_{\bm{1},-,-})$$ is defined as $u_X := \ell_X^{-1}$, another natural isomorphism. That \eqref{eq:square_End}  holds in this case follows from \eqref{eq:triangle2}.

The Hexagon axiom for $\Psi$ is a immediate consequence of the Pentagon axiom, and the commutativity of \eqref{eq:4} for $\Psi$ follows directly from the Triangle axiom and \eqref{eq:triangle2}. This concludes the proof.
\end{proof}

\begin{remark}
We warn the reader that the composite functor 
$$\Psi \circ Par : \Cstr \to \operatorname{End}_\C (\C) $$
is a monoidal equivalence of strict categories but it is \textit{not} a strict monoidal equivalence.
\end{remark}

\begin{corollary}\label{cor:diagram_CX1}
If $(F,c)$ is a right-module endofunctor, then the following diagram commutes for every object $X$ in $\C$:
$$
\begin{tikzcd}[column sep=0.25cm]
F(X) \otimes \bm{1} \drar[swap]{r_{FX}} \arrow{rr}{c_{X,\bm{1}}} &&  F(X \otimes \bm{1}) \dlar{F(r_X)} \\
 &     F(X) & 
\end{tikzcd}
$$
\end{corollary}
\begin{proof}
As we saw in the proof of \cref{thm:equiv_right_mod_endo}, the pair $(F,c)$ is isomorphic to $\Psi(F(\bm 1))$ in $\operatorname{End}_\C(\C)$ via the natural isomorphism $\alpha: F(\bm{1}) \otimes - \Longrightarrow F$ defined in \eqref{eq:def_alpha_endC}. Let us contemplate the following diagram in $\C$:
$$
\begin{tikzcd}
(F(\bm{1})\otimes X )\otimes \bm{1} \arrow[rrrr,"a_{F\bm{1},X, \bm{1}}"] \arrow[rrd,"r_{F\bm{1} \otimes X}"] \arrow[dd,"\alpha_X \otimes \id_{\bm{1}}"] &  &               &  & F(\bm{1})\otimes (X \otimes \bm{1}) \arrow[lld,"\id_{F\bm{1}}\otimes r_X"] \arrow[dd,"\alpha_{X \otimes \bm{1}}"] \\
                                       &  & F(\bm{1})\otimes X  &  &                           \\
F(X)\otimes  \bm{1} \arrow[rrrr,pos=0.3,"c_{X,\bm{1}}"] \arrow[rrd,"r_{FX}"]            &  &               &  & F(X\otimes \bm{1})\arrow[lld,"F(r_X)"]            \\
                                       &  & F(X)    \arrow[from=uu,"\alpha_X",pos=0.3,crossing over]        &  &                          
\end{tikzcd}
$$
The top face of the prism is precisely the right-hand side diagram of \eqref{eq:triangle2} so it commutes. Likewise the back face commutes  by \eqref{eq:square_End}, the left face by the naturality of the right unit constraint $r$ and the right side by the naturality of $\alpha$. So the whole diagram commutes and, in particular, so does the bottom face of the prism, which is the desired diagram.
\end{proof}

\subsection{Bicategories and their functors}

Next we would like to elaborate on a concept that can be viewed as joint generalizations of the notions of 2-category and monoidal category, namely that of bicategory.
Bicategories were first devised by Bénabou \cite{benabou}  and have been well-studied for the last sixty years \cite{maclane,gray,street,leinster,gurski,JY}. Rough\-ly speaking, a bicategory is the same thing as a 2-category except that the equalities of functors \eqref{eq:assoc_2-cat}--\eqref{eq:rightunit_2-cat}  do not hold ``on the nose'', but instead they are commutative up to a coherent isomorphisms, and these coherence constraints satisfy analogous Pentagon and Triangle axioms as the constraints for monoidal categories.

More precisely, a \textit{bicategory} $\B$  is the data of (compare with \cref{subsec:2-categories})
\begin{enumerate}
\item a family of objects $A, B, C, \ldots$ of $\B$,
\item for every pair of objects $A, B \in \B$, a hom-category $\B(A,B)$,
\item for each $A  \in \B$, a functor  $\id_{A}: * \to \B(A,A)$,
\item for every triple $A,B,C \in \B$, a functor called \textit{horizontal composition} $$\circ_{A,B,C} =\circ : \B(B,C) \times \B(A,B) \to \B(A,C),$$
\item for any quadruple of objects $A,B,C,D$ in $\B$, a natural isomorphism called the \textit{associator} $$a_{A,B,C,D}:  \circ_{A,B,D}(\circ_{B,C,D} \times \id_{\B(A,B)}) \overset{\cong}{\Longrightarrow} \circ_{A,C,D}(\id_{\B(C,D)} \times \circ_{A,B,C})  $$ of functors $$ \B(C,D) \times \B(B,C) \times \B(A,B) \to \B(A,D),$$
\item for any pair of objects $A,B$ in $\B$, natural isomorphisms called \textit{left} and \textit{right unitors}
\begin{align*}
\ell_{A,B}:& \circ_{A,B,B}(\id_{B} \times \id_{\B(A,B)}) \overset{\cong}{\Longrightarrow} \id_{\B(A,B)} \\
r_{A,B}:& \circ_{A,A,B}(\id_{\B(A,B)}\ \times \id_{A}) \overset{\cong}{\Longrightarrow} \id_{\B(A,B)} 
\end{align*}
\end{enumerate}
satisfying the following coherence conditions: for every $A,B,C,D,E \in \B$ and $f \in \B(A,B)$, $g \in \B(B,C)$, $h \in \B(C,D)$, $i \in \B(D,E)$, we have the following commutative diagrams in $\B(A,E)$ and $\B(A,C)$, respectively:
\begin{equation}\label{eq:pentagon_for_bicats}
\begin{tikzpicture}[commutative diagrams/every diagram]
\node (P0) at (90:2.3cm) {$((i \circ h)\circ g)\circ f$};    
\node (P1) at (90+72:2cm) {$(i \circ (h \circ g))\circ f$} ;
\node (P2) at (90+2*72:2cm) {\makebox[3ex][r]{$i\circ ((h\circ g) \circ f)$}};
\node (P3) at (90+3*72:2cm) {\makebox[3ex][l]{$i\circ (h\circ (g\circ f))$}};
\node (P4) at (90+4*72:2cm) {$(i\circ h)\circ (g\circ f)$};
\path[commutative diagrams/.cd, every arrow, every label]
(P0) edge node[swap] {$  a_{i,h,g} * \id_f $} (P1)
(P1) edge node[swap] {$a_{i, h \circ g, f}$} (P2)
(P2) edge node {$\id_i \times a_{h,g,f}$} (P3)
(P4) edge node {$a_{i,h, g * f}$} (P3)
(P0) edge node {$a_{i \circ h, g,f}$} (P4);
\end{tikzpicture}
\end{equation}
\begin{equation}\label{eq:triangle_for_bicats}
\begin{tikzcd}[column sep=0.3cm]
 (g \circ \id_B) \circ f \drar[swap]{r_g * \id_f} \arrow{rr}{a_{g, \id_B, f}} &&  g \circ (\id_g * \ell_f ) \dlar{\id_g\otimes\ell_{ f}} \\
 &     g \circ f & 
\end{tikzcd}
\end{equation}
In the diagrams above, we have suppressed from the notation the subscripts corresponding to the objects of $\B$, so for instance $a_{i,h,g} = (a_{B,C,D,E})_{i,h,g}$ in \eqref{eq:pentagon_for_bicats}. The terminology 0-cell, 1-cell and 2-cell will be used as for 2-categories.

\begin{examples}

This definition encodes, at once, the notions of (ordinary) category, 2-category, monoidal category and strict monoidal category:
\begin{enumerate}
\item A category is the same data as a locally discrete bicategory, that is, a bicategory $\B$ in which the categories $\B (A,B)$ are discrete, that is, contain only identities. Indeed in that case the associator and left and right unitors are necessarily the identity natural transformations, so they simply express that the composition is associative and left and right unital, and the diagrams \eqref{eq:pentagon_for_bicats} and \eqref{eq:triangle_for_bicats} are void.

\item A 2-category is the same data as a bicategory in which the associator and left and right unitors are the identity natural transformations. Indeed this last condition are indicating that the three equalities of functors \eqref{eq:assoc_2-cat}--\eqref{eq:rightunit_2-cat} hold ``on the nose''.

\item A monoidal category is the same data as a one-object bicategory.	Indeed if we denote by $\star$ such a single object, then $\C:= \B(\star,\star)$ is a category with monoidal product given by the horizontal composition, $f \otimes g := f \circ g$. In this case, \eqref{eq:pentagon_for_bicats} and \eqref{eq:triangle_for_bicats} are exactly the Pentagon and Triangle axioms for monoidal categories.

\item A strict monoidal category is a one-object 2-category, that is, a one-object bicategory where the associator and left and right unitors are the identity natural transformations. This follows directly from the previous cases.
\end{enumerate}
\end{examples}

Just like the notion of bicategory arises from allowing certain equalities in the definition of 2-category to hold only up to natural isomorphism, a \textit{lax functor} or morphism $F: \B \to \B'$ between bicategories is defined in a similar fashion as that of  a 2-functor with the only difference that the equalities of functors \eqref{eq:Anotherdiagram1} and \eqref{eq:Anotherdiagram2} hold only up to natural isomorphism.

Here is the definition: given bicategories $\B, \B'$, a \textit{lax functor} $F: \B  \to \B'$  is the data of (compare with \cref{subsec:2-functor})
\begin{enumerate}
\item for every object $A \in \B$, an object $F(A) \in \B'$,
\item for every pair of objects $A, B \in \B$, a functor$$F_{A,B}: \B(A,B) \to \B'(F(A),F(B))$$
\item for any triple of objects $A,B,C$ in $\B$, a natural transformation $$ \gamma_{A,B,C}^F= \gamma : \circ_{FA,FB,FC}(F_{B,C} \times F_{A,B}) \Longrightarrow F_{A,C} \circ_{A,B,C}   $$ of functors $$\B (B,C) \times \B(A,B) \to \B'(FA,FC)  $$
\item  for every object $A$ in $\B$, a natural transformation $$u_A^F= u: \id_{FA} \Longrightarrow F_{A,A} \id_{A}$$ of functors $ * \to \B' (FA,FA)$,
\end{enumerate}
such that for every quadruple $A,B,C,D \in \B$ and every $f \in \B(A,B)$, $g \in \B(B,C)$, $h \in \B(C,D)$, the following diagrams commute:

\begin{equation}\label{eq:hexagon_4_bicats}
\begin{tikzcd}[column sep={1cm,between origins}, row sep={1.732050808cm,between origins}]
    & {\makebox[3ex][r]{$(F(h) \circ F(g)) \circ F(f)$}} \arrow[rr, "a'_{Ff,Fg,Fh}"] \arrow[ld, "\gamma_{h,g} * \id_{Ff}"'] && {\makebox[3ex][l]{$F(h) \circ (F(g) \circ F(f))$}} \arrow[rd, "\id_{Fh} * \gamma_{g,f}"] &  \\
    F(h \circ g) \circ F(f) \arrow[rd, "\gamma_{h \circ g, f}"']&  &&  & F(h) \circ F(g \circ f) \arrow[dl,"\gamma_{h, g \circ f}"] \\
    & {\makebox[3ex][r]{$F((h \circ g) \circ f)$}} \arrow[rr, swap, "F(a_{f,g,h})"'] && {\makebox[3ex][l]{$F(h \circ (g \circ f))$}}  & 
\end{tikzcd}
\end{equation}
\begin{equation}\label{eq:squares_4_bicats}
\begin{tikzcd}[sep=2.7em]
\id_{FB} \circ F(f) \rar{\ell'_{Ff}} \dar[swap]{u * \id_{Ff}}  & F(f)  &  F(f) \circ \id_{FA}  \rar{r'_{Ff}} \dar[swap]{\id_{Ff} * u}  & F(f)  \\ 
F(\id_B) \circ F(f) \rar{\gamma_{\id_B,f}} & F(\id_B \circ f) \uar[swap]{F(\ell_f)} &   F(f)\circ F(\id_A) \rar{\gamma_{Ff, \id_A}} & F( f \circ \id_A)  \uar[swap]{F(r_f)}
\end{tikzcd}
\end{equation}
where we have suppressed  from the natural transformations the subscripts corresponding to objects of $\B$ to improve readability.

A lax functor between categories is called a \textit{pseudofuctor} (resp. a \textit{strict functor}) if the natural transformations $\gamma, u$ are natural isomorphisms (resp. the identity natural transformations).

\begin{examples}

It is readily seen that
\begin{enumerate}
\item A lax functor between locally discrete bicategories (that is, ordinary categories)  is the same data as an ordinary functor (in fact, such a functor is necessarily strict).
\item A strict functor between bicategories where the associators and unitors are the identities (that is, 2-categories) is the same data as a 2-functor.
\item A lax functor (resp. pseudofunctor, resp. strict functor) between one-object bicategories (that is, monoidal categories) is the same data as a lax (resp. strong, resp. strict) monoidal functor. Indeed \eqref{eq:hexagon_4_bicats} translate directly to the Hexagon axiom and \eqref{eq:squares_4_bicats} amounts exactly to \eqref{eq:4}.
\end{enumerate}
\end{examples}

Let us now define the bicategorical analogue of an equivalence of categories. We say that a pseudofunctor $F: \B \to \B'$ between bicategories is a \textit{biequivalence} if $F$ is
\begin{itemize}
\item \textit{a local equivalence}, in the sense that for every pair of objects $A,B$ of $\B$, the functor $$F_{A,B}: \B(A,B) \to \B'(F(A),F(B))$$ is an equivalence of categories (hence $F_{A,B}$ is fully faithful and essentially surjective in the usual sense),
\item \textit{essentially surjective up to equivalence}, in the sense that for every object $C$ in $\B'$, there exist an object $A$ in $\C$ and 1-cells $f \in \B'(FA, C)$, $g \in \B'(C , FA)$ such that $g\circ f$ is isomorphic to $\id_{FA}$ in $\B'(FA,FA)$ and $f \circ g$ is isomorphic to $\id_C$ in $\B'(C,C) $.
\end{itemize}

\begin{examples}
We have:
\begin{enumerate}
\item A biequivalence $F: \B \to \B'$ between locally discrete categories is the same data as an equivalence between the corresponding ordinary categories. Indeed $F$ being a local equivalence simply means that $F$, viewed as an ordinary functor, is fully faithful, because an equivalence between discrete categories must necessarily be an isomorphism, so we get a bijection on objects. Likewise $F$ being essentially surjective up to equivalence translates into $F$, as an ordinary functor, being essentially surjective because any 2-cell is the identity.
\item A biequivalence between 2-categories is called a \textit{2-equivalence}.
\item A biequivalence between one-object bicategories (a.k.a. monoidal categories) is the same data as a monoidal equivalence of monoidal categories. Indeed in this case the local equivalence condition is encoding the monoidal equivalence, since the essentially surjectivity up to equivalence  holds for any functor, taking $f,g$ the monoidal unit.
\end{enumerate}
\end{examples}

Now that we have the required language, let us give a more concise blueprint of what we aim to explain in this section and the reason why we are interested in bicategories. 

We showed in \cref{thm:1} that every monoidal category $\C$ is monoidally equivalent to a strict one $\Cstr$. It turns out that this result can be generalised to a statement about bicategories, that encodes the result when restricted to one-object bicategories. Namely, any bicategory $\B$ is biequivalent to a 2-category $\B^{\mathrm{str}}$, that is, a bicategory where the associator and the left and right unitors are the identities. Since biequivalence becomes monoidal equivalence and the associator and the unitors become the associativity and left and right unit constraints of a monoidal category, we get the result.  In fact, we will describe explicitly such a strictification of a bicategory and show that for one-object bicategories, one precisely obtains the category of right-module endofunctors that we studied in \cref{subsec:right-module_endofunctors}.

What is more,  it is possible to lift the 2-adjunction from  \cref{thm:2adj_str} defined by the strictification of monoidal categories to a diagram
\begin{equation}
\begin{tikzcd}[column sep={4em,between origins},row sep={6em,between origins}]
\mathsf{BiCat}
  \arrow[rrrr, bend left=10, swap, "{\mathrm{str}}"' pos=0.50]
&& \bot &&  \mathsf{2Cat} 
  \arrow[llll, bend left=10, swap, "U"' pos=0.50] \\
\mathsf{MonCat} \uar{U}
  \arrow[rrrr, bend left=10, swap, "{\mathrm{str}}"' pos=0.53]
&& \bot &&  \mathsf{strMonCat} \uar[swap]{U}
  \arrow[llll, bend left=10, swap, "U"' pos=0.47]
\end{tikzcd}
\end{equation}
where $\mathsf{BiCat}$ (resp. $\mathsf{2Cat}$) refer to the category of bicategories, pseudofunctors and transformations (resp. the category of 2-categories, 2-functors and 2-natural transformations). Here, transformations and 2-natural transformations refer to suitable notions of morphisms between pseudofunctors and 2-functors, respectively, and will be explained in the next subsection. Also,  we content ourselves regarding this as a  diagram of bicategories, but it can even be interpreted as a diagram of tricategories, see \cite{campbell}.

\subsection{Transformations and modifications}

Just like we have morphisms  between functors of ordinary categories, namely natural transformations, we can also consider morphisms between functors of bicategories, and these will be called transformations.

Here is the precise definition: if $F,G: \B \to \B'$ are lax functors between bicategories, a \textit{lax transformation} $\alpha: F \Longrightarrow  G $ is the data of 

\begin{enumerate}
\item for every object $A$ in $\B$, a 1-cell $\alpha_A \in \B'(FA,GA)$,
\item for every pair of objects $A,B$ in $\B$, a natural transformation $\alpha_{A,B}$ 
of the functors
\begin{equation}\label{eq:square_lax_trans}
\begin{tikzcd}[column sep=2.5em, row sep=3em]
\B(A,B) \rar{F_{A,B}} \dar[swap]{G_{A,B}} & \B'(FA, FB) \dar{(\alpha_B)_*} \\
\B' (GA,GB) \arrow[Rightarrow, shorten >=25pt, shorten <=25pt, ur,"\alpha_{A,B}"] \rar{(\alpha_A)^*} &\B'(FA,GB) 
\end{tikzcd}
\end{equation}
where $(\alpha_B)^*$ and $(\alpha_A)_*$ are the precomposition and postcomposition functors respectively,
\end{enumerate}
such that for every triple $A,B,C\in \B$ and every $f \in \B(A,B)$, $g \in \B(B,C)$, the following diagrams commute:
\begin{equation}\label{eq:octagon}
\begin{tikzcd}[column sep=2.5em, row sep=3em]
& (Gg \circ Gf) \circ \alpha_A \rar{\gamma^G_{g,f} * \id_{\alpha_A}} \dlar[swap]{a'_{Gg, Gf, \alpha_A}} & G(g \circ f) \circ \alpha_A \drar{\alpha_{g \circ f}}  & \\
Gg \circ (Gf \circ \alpha_A )  \dar{\id_{Gg}* \alpha_f} & & & \alpha_C \circ F(g \circ f)  \\
Gg \circ (\alpha_B \circ Ff) \drar{(a')^{-1}_{Gg, \alpha_B, Ff}} & & & \alpha_C \circ (Fg \circ Ff) \uar{\id_{\alpha_C}* \gamma^F_{g,f}}  \\
& (Gg \circ \alpha_B) \circ Ff \rar{\alpha_g * \id_{Ff}}  & (\alpha_C \circ Fg) \circ Ff \urar{a'_{\alpha_C, Fg, Ff}}  &	
\end{tikzcd}
\end{equation}
\begin{equation}\label{eq:pentagon_for_transformations}
\begin{tikzpicture}[commutative diagrams/every diagram]
\node (P0) at (90:2.3cm) {$\id_{GA} \circ \alpha_A$};    
\node (P1) at (90+72:2cm) {$\alpha_A$} ;
\node (P2) at (90+2*72:2cm) {\makebox[3ex][r]{$\alpha_A \circ \id_{FA}$}};
\node (P3) at (90+3*72:2cm) {\makebox[3ex][l]{$\alpha_A \circ F(\id_A)$}};
\node (P4) at (90+4*72:2cm) {$G(\id_A) \circ \alpha_A$};
\path[commutative diagrams/.cd, every arrow, every label]
(P0) edge node[swap] {$  \ell'$} (P1)
(P1) edge node[swap] {$(r')^{-1}$} (P2)
(P2) edge node {$\id_{\alpha_A}*u^F$} (P3)
(P4) edge node {$\alpha_{\id_A}$} (P3)
(P0) edge node {$u^G *\id_{\alpha_A}$} (P4);
\end{tikzpicture}
\end{equation}

A lax transformation in which every component $(\alpha_{A,B})_f$ is an isomorphism (resp. the identity) is called a \textit{strong} (resp. \textit{strict}) transformation.

\begin{examples}
We continue the previous examples:
\begin{enumerate}
\item If $F,G: \B \to \B'$ are strict functors between locally discrete categories, i.e. functors between ordinary categories, then a lax transformation (which in this case is necessarily strict) is the same data as a natural transformation. Indeed in this case \eqref{eq:octagon} and \eqref{eq:pentagon_for_transformations} are void.
\item If $F,G: \B \to \B'$ are 2-functors between 2-categories (viewed as bicategories), then a strict transformation is called a \textit{2-natural transformation}.
\end{enumerate}
\end{examples}

\begin{remark}
Monoidal natural transformations between monoidal functors are however not recovered from lax transformations, but from \textit{oplax transformations}, resulting from considering the natural transformations $\alpha_{A,B}$ from \eqref{eq:square_lax_trans}  with the opposite direction \cite{JY}.
\end{remark}

In a bicategory (for example a 2-category), having 2-cells allows us to consider morphisms between transformations (for example 2-natural transformations). These morphisms are called modifications.

More precisely, if $F,G: \B \to \B'$ are lax functors between bicategories and $\alpha, \beta: F \Longrightarrow G$ are lax transformations, a \textit{modification} $\Gamma: \alpha \LLongrightarrow \beta$ is the data of
\begin{enumerate}
\item for every object $A$ in $\B$, a 2-cell  $$\Gamma_A: \alpha_A \to \beta_A$$ 
(this is an arrow in the category $\B'(FA,GA)$)
\end{enumerate}
satisfying the following compatibility condition: for every pair of objects $A,B$ in $\B$ and every 1-cell $f \in \B(A,B)$, the diagram
\begin{equation}\label{eq:square_modifications}
\begin{tikzcd}[column sep=3.5em, row sep=2.5em]
Gf \circ  \alpha_A \rar{\id_{Gf}* \Gamma_A} \dar[swap]{\alpha_f} & Gf \circ \beta_A \dar{\beta_f} \\
\alpha_B \circ Ff \rar{\Gamma_B *\id_{Ff}} & \beta_B \circ Ff
\end{tikzcd}
\end{equation}
commutes. If every 2-cell $\Gamma_A: \alpha_A \to \beta_A$ is invertible, then we say that the modification is \textit{invertible}.

Given pseudofunctors $F,G: \B \to \B'$, let us define the category $\mathsf{Str}(F,G)$ as follows: its objects are strong transformations $\alpha: F \Longrightarrow G$, and its arrows are modifications $\Gamma: \alpha \LLongrightarrow \beta$. The composition of two modifications  and $\Gamma: \alpha \LLongrightarrow \beta$ and  $\Delta: \beta \LLongrightarrow \gamma$ is given by $$(\Delta\circ \Gamma)_A:= \Delta_A \circ \Gamma_A : \alpha_A\to \gamma_A$$ (this is a composition of arrows in the category $\B'(FA,GA)$). The identity of a strong transformation $\alpha$ is given by the identity modification $(\id_\alpha)_A := \id_{\alpha_A}: \alpha_A \to \alpha_A$, that is, the identity of the object $\alpha_A \in \B'(FA,GA)$. It is readily seen, pasting the corresponding diagrams \eqref{eq:square_modifications}, that this is indeed a category.

\subsection{Strictification of bicategories}

Let us move on to explain how to construct a 2-category biequivalent to a given bicategory. The construction will be based on the bicategorical Yoneda lemma.

Recall that if $\C$ is a (locally small) ordinary category and $X$ is an object in $\C$, then the \textit{Yoneda embedding}
$$ Y: \C \hooklongrightarrow  \mathbf{Set}^{\C^{\mathrm{op}}} \qquad , \qquad X \mapsto \homC{-}{X}$$ induces an equivalence between $\C$ and a  full subcategory of a presheaf category (namely, its essential image). We will see that this statement can be categorified into a statement of bicategories.

\begin{construction}[e.g. \cite{JY}]
Let $\B$ be a bicategory. The \textit{opposite bicategory} is the bicategory $\B^{\mathrm{op}}$ determined by the following data: the objects of $\B^{\mathrm{op}}$ are the same as the objects of $\B$. The hom categories are given by $\B^{\mathrm{op}} (A,B) := \B(B,A)$. The identity of an object $A$ in $\B^{\mathrm{op}}$ is the same as the identity of $A$ in $\B$. The horizontal composition is the functor $\circ_{A,B,C}^{\mathrm{op}}:= \circ_{C,B,A} \circ \mathrm{twist}$. The associator is given by $a^{\mathrm{op}}_{h,g,f}:= a^{-1}_{f,g,h}$ and the left and right unitors by $\ell_f^{\mathrm{op}} := r^{-1}_f$ and $r^{\mathrm{op}}_f:= \ell_f^{-1}$. It is immediate to check that \eqref{eq:pentagon_for_bicats} and \eqref{eq:triangle_for_bicats} hold so $\B^{\mathrm{op}}$ is indeed a bicategory.
\end{construction}

\begin{construction}[e.g. \cite{JY}]

Now let us consider the 2-category $\mathsf{Cat}$  of categories, functors and natural transformations. If $\B$ is a bicategory,  let us define the presheaf 2-category $\mathsf{Cat}^{\B^{\mathrm{op}}}$ as follows: its objects are pseudofunctors $\B^{\mathrm{op}} \to \mathsf{Cat}$. The hom categories are given by strong transformations and modifications between them, $$\mathsf{Cat}^{\B^{\mathrm{op}}}(F,G):= \mathsf{Str}(F,G).  $$ Given a pseudofunctor $F$, its identity transformation $\id_F: F \to F$ is the strict transformation given by the 1-cell $(\id_F)_{A} := \id_{FA} \in \mathsf{Cat}(FA,FA)$ for every object $A$ in $\B$, and the natural isomorphism
$$ ((\id_F)_{A,B})_f := \id_{Ff}$$
for every pair of objects $A,B$ in $\B$ and every $f \in \B(A,B)$ (note that this is an arrow of the category $\mathsf{Cat}(FA,FA)$, that is the identity natural transformation). It is easy to check that \eqref{eq:octagon} and \eqref{eq:pentagon_for_transformations} hold. Lastly, given pseudofunctors  $F,G,H: \B^{\mathrm{op}} \to \mathsf{Cat} $, the \textit{horizontal composition} 
$$ \circ : \mathsf{Str}(G,H) \times \mathsf{Str}(F,G) \to \mathsf{Str}(F,H)$$ is defined as follows: given strong transformations $\alpha: F \Longrightarrow G$ and $\beta: G \Longrightarrow H$, then setting
\begin{equation}\label{eq:hor_comp_Str1}
(\beta \circ \alpha)_A:=  \beta_A \circ \alpha_A 
\end{equation} (this is an object in $\mathsf{Cat}(FA,HA)$) and
\begin{equation}\label{eq:hor_comp_Str2}
((\beta \circ \alpha)_{A,B})_f:= (\id_{\beta_B}* \alpha_f)  \circ  (\beta_f*\id_{\alpha_A}) : Hf \circ \beta_A \circ \alpha_A \to \beta_B \circ \alpha_B \circ Ff 
\end{equation}
 for objects $A,B$ in $\B$ and $f \in \B^{\mathrm{op}} (A,B)= \B (B,A)$. On the other hand, for strong transformations $\alpha, \alpha ' : F \Longrightarrow G$ and $\beta, \beta' : G \Longrightarrow H$ and modifications $\Gamma: \alpha \LLongrightarrow \alpha'$ and $\Delta: \beta \LLongrightarrow \beta'$, the horizontal composition is defined as $$(\Delta * \Gamma)_A := \Delta_A * \Gamma_A : (\beta \circ \alpha)_A \to (\beta' \circ \alpha')_A.$$ Because $\mathsf{Cat}$ is a 2-category, it is straightforward to check that $\beta \circ \alpha$ is a strong transformation and $\Delta * \Gamma$ is a modification, as well as the fact that the horizontal composition is indeed associative and unital.
\end{construction}

The pseudofunctors $\B^{\mathrm{op}} \to \mathsf{Cat}$ that we are interested in are, just like for the Yoneda lemma, the representable ones:

\begin{construction}[e.g. \cite{JY}]\label{const:representable}
Let $\B$ be a bicategory and $E$ an object of $\B$. We define the \textit{pseudofunctor represented by $E$} $$\B(-,E): \B^{\mathrm{op}} \to \mathsf{Cat}$$ as follows: it assigns, for every object $A$ in $\B$, the hom category $\B(A,E)$. For objects $A,B$ in $\B$, the functor $$ \B(-,E)_{A,B}: \B (A,B) \to \mathsf{Cat}(\B (B,E), \B(A,E))  $$ sends a 1-cell $f \in \B (A,B)$ to the precomposition functor
\begin{equation}\label{eq:precomp_functor}
f^* := - \circ_{A,B,E} f : \B(B,E) \to \B (A,E),
\end{equation}
and a 2-cell  $\theta: f \to g$ in $\B (A,B)$ to the natural transformation $\theta^* : f^* \Longrightarrow g^*$ of functors $\B(B,E) \to \B(A,E)$ determined by  $\theta^*_h := \id_h * \theta : h \circ f \to h \circ g $ for every $h \in \B(B,E)$.

For objects $A,B,C$ in $\B$, the constraint $\gamma^{ \B(-,E)}_{A,B,C}=\gamma$ is defined as follows: for 1-cells $f \in \B(A,B)$ and $g \in \B(B,C)$, the natural transformation  $$\gamma_{g,f} : g^* \circ f^* \Longrightarrow (g \circ f)^*$$ of functors $ \B(C,E) \to \B(A,E)  $ given by the 2-cell
\begin{equation}\label{eq:def_gamma_gfh}
(\gamma_{g,f})_h := a_{h,f,g}: (g^* \circ f^*)(h)= (h \circ f) \circ g \to  h \circ (f \circ g)=(f \circ g)^*(h) 
\end{equation}
 in $\B(A,E)$,  for $h \in \B(C,E)$.

Lastly, for an object $A$ in $\C$, the constraint $u^{ \B(-,E)}_{A,}=u$  is set to be the natural transformation $u: \id_{\B(A,E)} \Longrightarrow \id_A^*$ of functors $\B(A,E)\to \B(A,E)$ defined by $$u_h:= r_h^{-1}: h \to h \circ \id_A = \id_A^*(h)$$ for $h \in \B(A,E)$.

That \eqref{eq:hexagon_4_bicats} holds follows from the pentagon axiom \eqref{eq:pentagon_for_bicats} for $\B$, and that \eqref{eq:squares_4_bicats} hold is consequence of \eqref{eq:triangle_for_bicats} and the analogue of \eqref{eq:triangle2} for bicategories. For a detailed argument see \cite[\S 4.5]{JY}
\end{construction}


Let us move on to construct the bicategorical analogue of the Yoneda embedding.

\begin{construction}[e.g. \cite{JY}]
Let $\B$ be a bicategory. We aim to construct a pseudofunctor  $$Y: \B \hooklongrightarrow   \mathsf{Cat}^{\B^{\mathrm{op}}} $$ from $\B$ to the presheaf 2-category $\mathsf{Cat}^{\B^{\mathrm{op}}}$ in the following way: to every object $A$ in $\B$ is assigned the representable pseudofunctor $Y_C:= \B(-,C)$ from \cref{const:representable}. Given a pair of objects $C,D$ in $\B$,  $$Y_{C,D}: \B(C,D) \to \mathsf{Str}(Y_C, Y_C)$$ is the functor defined in the following way:
\begin{itemize}
\item $Y_{C,D}$ sends a 1-cell $f \in \B(C,D)$ to the strong transformation $Y_{C,D}(f)$ that assigns to every object $B$ in $\B$ the \textit{postcomposition functor }
\begin{equation}\label{eq:post_comp}
Y_{C,D}(f)_B=f_* := f \circ_{B,C,D} -: \B(B,C) \to \B(B,D) 
\end{equation}
and for every pair of objects $B,A$ in $\B$, a natural transformation $$ Y_{C,D}(f)_{B,A}: (f_*)_* \circ (Y_D)_{B,A} \Longrightarrow (f_*)^*  \circ (Y_C)_{B,A}$$ of functors $\B^{\mathrm{op}}(B,A)= \B(A,B) \to   \mathsf{Cat}(\B (B,C), \B(A,D))  $ given componentwise, for every $g \in \B(A,B)$, by the natural transformation $$ (Y_{C,D}(f)_{B,A})_g: f_* \circ g^* \Longrightarrow g^* \circ f_*  $$ of functors $\B(B,C) \to \B(A,D)$ determined by 
\begin{equation}\label{eq:a_inverse}
((Y_{C,D}(f)_{B,A})_g)_h:= a_{f,h,g}: (f\circ h)\circ g \to f \circ (h\circ g)
\end{equation}
for every $h \in \B(B,C)$.
\item $Y_{C,D}$ sends a 2-cell $\alpha: f \to g$ to the modification $$Y_{C,D}(\alpha): Y_{C,D}(f)\LLongrightarrow Y_{C,D}(g)$$ that assigns to every object $B$ the natural transformation $$ Y_{C,D}(\alpha)_B:= \alpha_* : f_* \Longrightarrow g_*$$ of functors $\B(B,C) \to \B(B,D)$ given componentwise by
\begin{equation}\label{eq:Ystarstar_in_arrows}
(\alpha_*)_h:= \alpha * \id_h : f \circ h \to g \circ h
\end{equation}
for every $h \in \B(B,C)$.
\end{itemize}
Given objects $A,B,C$ in $\B$, the constraint $\gamma^Y_{A,B,C}=\gamma$ of the pseudofunctor $Y$ is the natural isomorphism
$$ \gamma: \circ_{Y_A,Y_B,Y_C}(Y_{B,C}\times Y_{A,B})  \Longrightarrow   Y_{A,C}\circ_{A,B,C}  $$
of functors $\B(B,C)\times \B(A,C) \to \mathsf{Str}(Y_A,Y_B)$ is set to be, for 1-cells $f \in \B(A,B)$, $g \in \B(B,C)$, componentwise the modification $$ \gamma_{g,f} : Y_{B,C}(g) \circ Y_{A,B}(g) \LLongrightarrow Y_{A,C}(g \circ f) $$ given by, for every object $E$ in $\B$, the natural transformation $$ (\gamma_{g,f})_E: g_* \circ f_* \Longrightarrow (g \circ f)_* $$ of functors $\B(E,A)\to \B(E,C)$ determined by
\begin{equation}\label{eq:gamma_bicat}
((\gamma_{g,f})_E)_h := a^{-1}_{g,f,h}: (g_* \circ f_*)(h)= g \circ (f \circ h) \to (g \circ f) \circ h = (g \circ f)_*(h)
\end{equation}
for $h \in \B(E,A)$. Similarly, for $A$ in $\B$, the constraint $u_A^Y=u$ is determined by 
\begin{equation}\label{eq:u_bicat}
(u_E)_h := (\ell^{-1}_{E,A})_h: h \to \id_A\circ h  
\end{equation}
 for an object $E$ in $\B$ and $h \in \B(E,A)$. 
\end{construction}

We can finally state the bicategorical Yoneda lemma:

\begin{theorem}[\cite{street,leinster_hohc,JY,bakovic}]\label{thm:bicategorical_Yoneda}
For any bicategory $\B$, the assignment $$ Y: \B \hooklongrightarrow   \mathsf{Cat}^{\B^{\mathrm{op}}} \qquad , \qquad C \mapsto  Y_C:= \B(-,C)$$ constructed above gives rise to a well-defined pseudofunctor which is a local equivalence, $$Y_{A,B}: \B(A,B) \overset{\simeq}{\to} \mathsf{Str}(Y_A, Y_B).$$
\end{theorem}

This functor is called the \textit{Yoneda pseudofunctor}. Since $ \mathsf{Cat}^{\B^{\mathrm{op}}}$ is a 2-category, the strictification result for bicategories will be an immediate consequence of the bicategorical Yonda lemma. More precisely, let $\mathsf{str}(\B)$ be the essential image of $Y$, that is, the full sub-2-category on objects the represented pseudofunctors $\B(-,C)$ for $C$ in $\B$. Therefore we obtain
\begin{corollary}
Any bicategory $\B$ is biequivalent to a 2-category $\mathsf{str}(\B)$ through the Yoneda pseudofunctor $Y: \B \to \mathsf{str}(\B)$.
\end{corollary}

If $\B$ is a one-object bicategory, then $\mathsf{str}(\B)$  is a one-object 2-category, that is, a strict monoidal category, and  we obtain

\begin{corollary}\label{cor:Y_star_star}
Let $\C$ be a monoidal category, viewed equivalently as a one-object bicategory $\B$ with single object $\star$. Then $\C$ is monoidally equivalent, via the Yoneda pseudofunctor,  to a strict monoidal category,
$$ Y_{\star, \star}: \B(\star, \star) = \C \overset{\simeq}{\to}  \mathsf{Str}(Y_\star, Y_\star), $$ where $Y_\star = \B(-, \star)$.
\end{corollary}

Let us unravel the construction of the strict monoidal category $\mathsf{Str}(Y_\star, Y_\star)$ as well as the  monoidal equivalence $Y_{\star, \star}:  \C \overset{\simeq}{\to}  \mathsf{Str}(Y_\star, Y_\star) $ induced by the Yoneda pseudofunctor from the previous corollary.

An object in $\mathsf{Str}(Y_\star, Y_\star)$ is a strong transformation $\alpha: Y_\star \to Y_\star$. By definition, this is the data of
\begin{itemize}
\item a single 1-cell $\alpha_\star =: F \in \mathsf{Cat}(\C, \C)$  (since $\B$ has one single object), that is, an endofunctor $F: \C \to \C$,
\item a natural isomorphism $\alpha_{\star, \star}$ of functors
$$
\begin{tikzcd}[column sep=2.5em, row sep=3em]
\C \rar{(Y_\star)_{\star, \star}} \dar[swap]{(Y_\star)_{\star, \star}} & \mathsf{Cat}(\C, \C) \dar{F_*} \\
\mathsf{Cat}(\C, \C) \arrow[Rightarrow, shorten >=18pt, shorten <=18pt, ur,"\alpha_{\star, \star}"] \rar{F^*} & \mathsf{Cat}(\C, \C)
\end{tikzcd}
$$
By \eqref{eq:precomp_functor},  $(Y_\star)_{\star, \star}$ is the functor $(Y_\star)_{\star, \star}(X)= - \otimes X$ for $X \in \C$, that we write  $(Y_\star)_{\star, \star}= - \otimes ?$. Therefore,  $\alpha_{\star, \star}$ is a natural isomorphism $$ \alpha_{\star, \star}: F(-)\otimes ? \overset{\cong}{\Longrightarrow} F(- \otimes ?)  $$ of functors $\C \to \mathsf{Cat}(\C, \C)$. By the exponential law for categories (provided that $\mathbf{Cat}$ is cartesian closed), giving such functors amounts to giving functors $\C \times \C \to \C$, and giving $\alpha_{\star, \star}$ amounts to giving a natural isomorphism  $$ c: F(-)\otimes - \overset{\cong}{\Longrightarrow} F(- \otimes -)  .$$ Both natural isomorphims are related by
\begin{equation}\label{eq:equiv_alpha_c}
c_{X,Y}=((\alpha_{\star, \star})_Y)_X.
\end{equation}
\end{itemize}
In other words, $\alpha= (F,c)$. Let us now analyse the coherence diagrams \eqref{eq:octagon} and \eqref{eq:pentagon_for_transformations}. For objects $Y,Z \in \C$ (that is, for 1-cells in $\B(\star, \star)$),  the octagon \eqref{eq:octagon} becomes a pentagon since $\mathsf{Cat}$ is a 2-category and then the associator $a'$ is the identity. Keeping in mind the definitions of \cref{const:representable}, the octagon diagram translates in this case to the following diagram of functors and natural isomorphisms between them:
\begin{equation*}
\begin{tikzpicture}[commutative diagrams/every diagram]
\node (P0) at (90:2.3cm) {$(F(-) \otimes Y) \otimes Z$};    
\node (P1) at (90+72:2cm) {$F(- \otimes Y)\otimes Z$} ;
\node (P2) at (90+2*72:2cm) {\makebox[3ex][r]{$F((- \otimes Y) \otimes Z)$}};
\node (P3) at (90+3*72:2cm) {\makebox[3ex][l]{$F(- \otimes (Y\otimes Z))$}};
\node (P4) at (90+4*72:2cm) {$F(-) \otimes (Y \otimes Z)$};
\path[commutative diagrams/.cd, every arrow, every label]
(P0) edge[double, -implies] node[swap] {$  (\alpha_{\star,\star})_Y \otimes \id_Z $} (P1)
(P1) edge[double, -implies] node[swap] {$(\alpha_{\star, \star})_{Z}*\id_{-\otimes Y}$} (P2)
(P2) edge[double, -implies] node {$F(a_{-,Y,Z})$} (P3)
(P4) edge[double, -implies] node {$(\alpha_{\star, \star})_{Y\otimes Z}$} (P3)
(P0) edge[double, -implies] node {$a_{F(-),Y,Z}$} (P4);
\end{tikzpicture}
\end{equation*}
Evaluating at an object $X$ in $\C$, we get exactly \eqref{eq:pentagon_End}. Therefore, the pair $(F,c)$ obtained from $\alpha$ is a right-module endofunctor of $\C$. On the other hand, \eqref{eq:pentagon_for_transformations} in this case reads
$$
\begin{tikzcd}[column sep=0.25cm]
F(-) \otimes \bm{1} \arrow[Rightarrow,from=dr,"r^{-1}F"] \arrow[Rightarrow]{rr}{(\alpha_{\star, \star})_{\bm 1}} &&  F(- \otimes \bm{1}) \arrow[Rightarrow,,from=dl,swap,"Fr^{-1}"] \\
 &     F & 
\end{tikzcd}
$$
where $Fr^{-1}$ and $r^{-1}F$ denote the whiskerings of $F$ and the inverse of the right unit constraint. Evaluating this diagram on an arbitrary object $X$ in $\C$ yields the diagram from \cref{cor:diagram_CX1}. Therefore, for one-object bicategories, the axiom \eqref{eq:pentagon_for_transformations} is redundant. The upshot of this discussion is that an object of $\mathsf{Str}(Y_\star,Y_\star)$ is precisely a right-module endofunctor of $\C$.

Let us now consider the arrows of $\mathsf{Str}(Y_\star,Y_\star)$. Given objects $\alpha=(F,c)$ and $\beta=(G,d)$ in $\mathsf{Str}(Y_\star,Y_\star)$, a modification $\Gamma: \alpha \LLongrightarrow \beta$ is, by definition, the data of a single 2-cell $\Gamma_\star: \alpha_\star  \to  \beta_\star$ (since $\B$ has one single object), that is, a natural transformation $\Gamma_\star : F \Longrightarrow G$.  The  compatibility condition \eqref{eq:square_modifications} translates in this case to the following commutative diagram of functors and natural transformations,  for every object $Y \in \C$ (i.e. a 1-cell in $\B (\star, \star))$:
$$
\begin{tikzcd}[arrows=Rightarrow, column sep=3.5em, row sep=3em]
F(-)\otimes Y \rar{\Gamma_\star \otimes \id_{Y}} \dar[swap]{(\alpha_{\star, \star})_Y} &  G(-)\otimes Y \dar{(\beta_{\star, \star})_Y} \\
F(- \otimes Y) \rar{\Gamma_\star * \id_{- \otimes Y}} & G(- \otimes Y)
\end{tikzcd}
$$
Evaluating at an object $X$ in $\C$, we get back exactly \eqref{eq:square_End}. Whence $\mathsf{Str}(Y_\star, Y_\star)= \operatorname{End}_\C (\C)$ as an ordinary category. What is more, the monoidal structures of both categories also coincide. Indeed since both monoidal structures are strict it suffices to check the monoidal products and the units. Now if $\alpha=(G,d)$ and $\alpha=(F,c)$, \eqref{eq:hor_comp_Str1} directly translates to the first slot of \eqref{eq:mon_str_End1}, that is $\beta_\star \circ \alpha_\star= F \circ G$. On the other hand \eqref{eq:hor_comp_Str1} translates to
\begin{align*}
((\beta \circ \alpha)_{\star,\star})_Y &= (\id_F * (\alpha_{\star, \star})_Y ) \circ ((\beta_{\star, \star})_Y * \id_G) \\
&= Fd_{-,Y} \circ c_{G(-),Y},
\end{align*}
and evaluating at an object $X$ in $\C$ we get back \eqref{eq:mon_str_End2}.

Let us now move on to unravel the  monoidal equivalence $$Y_{\star, \star}: \B(\star, \star)= \C \overset{\simeq}{\to}  \operatorname{End}_\C (\C)=  \mathsf{Str}(Y_\star, Y_\star) $$ induced by the Yoneda pseudofunctor. Given an object $X$ in $\C$ (i.e. a 1-cell in $\B(\star, \star)$), we have that $Y_{\star, \star}(X)$ consists, according to  \eqref{eq:post_comp} and \eqref{eq:a_inverse}, of the functor $$Y_{\star, \star}(X)_\star =   X \otimes - : \C \to \C$$ together with the natural transformation given by $$((Y_{\star, \star}(X)_\star)_{\star, \star})_{Z})_Y =a_{X, Y,Z}: (X \otimes Y)\otimes Z \to X \otimes (Y \otimes Z) . $$  
Viewing $Y_{\star, \star}(X)_\star$ as a natural transformation $c$ of functors $\C \times \C \to \C$ according to \eqref{eq:equiv_alpha_c}, we conclude that  $$c= a_{X,-,-}.$$
Moreover, if $f: X \to Y$ is an arrow in $\C$, then we have, according to \eqref{eq:Ystarstar_in_arrows}, that $$(Y_{\star, \star}(X)_\star (f)= \id_X \otimes f  $$
Therefore, we conclude that $Y_{\star, \star} =\Psi$ as ordinary functors. Let us finally check that they coincide as strong monoidal functors. The monoidal constraint $\gamma^{Y_{\star, \star}}= \gamma$ for $Y_{\star, \star}$ is, according to \eqref{eq:gamma_bicat}, equal to $\gamma_{X,Y}= a^{-1}_{X,Y,-}$ for objects $X,Y$ in $\C$, so it coincides with \eqref{eq:gamma_End}. Likewise, the unital constraint $u^{Y_{\star, \star}}=u$ equals $\ell^{-1}$ according to \eqref{eq:u_bicat}, whence it coincides with \eqref{eq:u_End}.

The upshot of the previous discussion is the following

\begin{theorem}
If $\B$ is a one-object bicategory with underlying monoidal category $\C:= \B(\star,\star)$, then the underlying strict monoidal category $\mathsf{Str}(Y_\star, Y_\star) $ of the 2-category $\mathsf{str}(\B)$ is precisely the category of right-module endofunctors $\operatorname{End}_\C(\C)$.

Furthermore, the monoidal equivalence $$Y_{\star, \star}:  \C \overset{\simeq}{\to}  \mathsf{Str}(Y_\star, Y_\star)$$ from \cref{cor:Y_star_star} is exactly the monoidal equivalence $$\Psi: \C \overset{\simeq}{\to} \operatorname{End}_\C (\C)$$ from \cref{thm:equiv_right_mod_endo}.
\end{theorem}

\bibliographystyle{halpha-abbrv}
\bibliography{bibliografia}

\end{document}